\definecolor{ao(english)}{rgb}{0.0, 0.5, 0.0}
\theoremstyle{definition}
\newtheorem{prop}{Proposition}
\newtheorem{theorem}{Theorem}[section]
\definecolor{LightCyan}{rgb}{0.88,1,1}
\definecolor{LightRed}{rgb}{1,0.7,0.7}
\def\bp{\boldsymbol{p}}
\def\bw{\boldsymbol{w}}
\newcommand{\R}{\mathbb{R}}
\title{Parsimonious Physics-Informed Random Projection Neural Networks for Initial-Value Problems of ODEs and index-1 DAEs}
\author{
  Gianluca Fabiani\\
  Scuola Superiore Meridionale\\
  Universit\`a degli Studi di Napoli Federico II\\
  Italy\\
  \texttt{gianluca.fabiani@unina.it} \\
   \And
 Evangelos Galaris\\
 Dept. of Mathematics and Applications\\ Universit\`a degli Studi di Napoli Federico II,\\
 Italy\\
 \texttt{evangelos.galaris@unina.it}
  \And
  Lucia Russo\\
Scienze e Tecnologie per l'Energia e la Mobilità Sostenibili\\
Consiglio Nazionale delle Ricerche\\
Italy\\
\texttt{lucia.russo@stems.cnr.it }
 \And
Constantinos Siettos\thanks{Corresponding author}\\
Dept. of Mathematics and Applications,\\
Scuola Superiore Meridionale\\
Universit\`a degli Studi di Napoli ``Federico II"\\
Italy\\
\texttt{constantinos.siettos@unina.it}
  }
\begin{document}

\maketitle

\begin{abstract}
We address a physics-informed neural network based on the concept of random projections for the numerical solution of initial value problems of nonlinear ODEs in linear-implicit form and index-1 DAEs, which may also arise from the spatial discretization of PDEs. The proposed scheme has a single hidden layer with appropriately randomly parametrized Gaussian kernels and a linear output layer, while the internal weights are fixed to ones. The unknown weights between the hidden and output layer are computed by Newton's iterations, using the Moore-Penrose pseudoinverse for low to medium scale, and sparse QR decomposition with $L^2$ regularization for medium to large scale systems. To deal with stiffness and sharp gradients, we thus propose an variable step size scheme based on the elementary local error control algorithm for adjusting the step size of integration and address a natural continuation method for providing good initial guesses for the Newton iterations. Building on previous works on random projections, we prove the approximation capability of the scheme for ODEs in the canonical form and index-1 DAEs in the semiexplicit form. The ``optimal" bounds of the uniform distribution from which the values of the shape parameters of the Gaussian kernels are sampled are ``parsimoniously" chosen based on the bias-variance trade-off decomposition, thus using the stiff van der Pol model as the reference solution for this task. The optimal bounds are fixed once and for all the problems studied here. In particular, the performance of the scheme is assessed through seven benchmark problems. Namely, we considered four index-1 DAEs, the Robertson model, a no autonomous model of five DAEs describing the motion of a bead on a rotating needle, a non autonomous model of six DAEs describing a power discharge control problem, the chemical so-called Akzo Nobel problem and three stiff problems, the Belousov-Zhabotinsky model, the Allen-Cahn PDE phase-field model and the Kuramoto-Sivashinsky PDE giving rise to chaotic dynamics. The efficiency of the scheme in terms of both numerical accuracy and computational cost is compared with three stiff/DAE solvers (\texttt{ode23t}, \texttt{ode23s}, \texttt{ode15s}) of the MATLAB ODE suite. Our results show that, the proposed scheme outperforms the aforementioned stiff solvers in several cases, especially in regimes where high stiffness and/or sharp gradients arise, in terms of numerical accuracy, while the computational costs are for any practical purposes comparable.
\end{abstract}

\keywords{Physics-Informed Machine Learning \and Initial Value Problems \and Differential-Algebraic Equations \and Random Projection Neural Networks}


\section{Introduction\label{sec:intro}}
The interest in using machine learning as an alternative to the classical numerical analysis methods \cite{gear1971numerical,brenan1995numerical,gear1990introduction,shampine1997matlab} for the solution of the inverse \cite{Krischer1993,Masri,chen1995universal,gonzalez1998identification,siettos2002truncated,siettos2002semiglobal,alexandridis2002modelling}, and forward problems \cite{lee1990neural,dissanayake1994neural,meade1994numerical,gerstberger1997feedforward,lagaris1998artificial} in differential equations modelling dynamical systems can be traced back three decades ago.
Today, this interest has been boosted together with our need to better understand and analyse the emergent dynamics of complex multiphysics/ multiscale dynamical systems of fundamental theoretical and technological importance \cite{karniadakis2021physics}. The objectives are mainly two. First, that of the solution of the inverse problem, i.e. that of identifying/discovering the hidden macroscopic laws, thus learning nonlinear operators and constructing coarse-scale dynamical models of ODEs and PDEs and their closures, from microscopic large-scale simulations and/or from multi-fidelity observations \cite{bongard2007automated,raissi2017machine,raissi2018numerical,raissi2019physics,rudy2019data,bertalan2019learning,arbabi2020linking,Lee2020,vlachas2020backpropagation,chen2021physics,chen2021solving,lu2021learning}. Second, based on the constructed coarse-scale models, to systematically investigate their dynamics by efficiently solving the corresponding differential equations, especially when dealing with (high-dimensional) PDEs \cite{fabiani2021numerical,calabro2021extreme,chen2021physics,chen2021solving,dong2021local,dong2021modified,ji2021stiff,lu2021deepxde,raissi2019physics,schiassi2021extreme}.
Towards this aim, physics-informed machine learning \cite{raissi2017machine,raissi2018numerical,raissi2019physics,lu2021learning,meng2020ppinn,chen2021physics,chen2021solving,karniadakis2021physics} has been addressed to integrate available/incomplete information from the underlying physics, thus relaxing the ``curse of dimensionality''. However, failures may arise at the training phase especially in deep learning formulations, while there is still the issue of the corresponding computational cost ~\cite{larochelle2009exploring,wang2021understanding,wang2022and,karniadakis2021physics}. Thus, a bet and a challenge is to develop physics-informed machine learning methods that can achieve high approximation accuracy at a low computational cost.

Within this framework, and towards this aim, we propose a physics-informed machine learning (PIRPNN) scheme based on the concept of random projections \cite{johnson1984extensions,rahimi2008weighted,gorban2016approximation}, for the numerical solution of initial-value problems of nonlinear ODEs and index-1 DAEs as these may also arise from the spatial discretization of PDEs. Our scheme consists of a single hidden layer, with Gaussian kernels, in which the weights between the input and hidden layer are fixed to ones. The shape parameters of the Gaussian kernels are random variables drawn from a uniform distribution which bounds are ``parsimoniously'' chosen based  on the expected bias-variance trade-off \cite{belkin2019reconciling}. The unknown parameters, i.e. the weights between the hidden and the output layer are estimated by solving with Newton-type iterations a  system of nonlinear algebraic equations. For low to medium scale systems this is achieved using SVD decomposition, while for large scale systems, we exploit a sparse QR factorization algorithm with $L^2$ regularization \cite{davis2009user}. Furthermore, to facilitate the convergence of Newton's iterations, especially at very stiff regimes and regimes with very sharp gradients, we (a) propose a variable step size scheme for adjusting the interval of integration based on the elementary local error control algorithm \cite{soderlind2002automatic}, and (b) address a natural continuation method for providing good initial guesses for the unknown weights.
To demonstrate the performance of the proposed method in terms of both approximation accuracy and computational cost, we have chosen seven benchmark problems, four index-1 DAEs and three stiff problems of ODEs, thus comparing it with the \texttt{ode23s}, \texttt{ode23t} and \texttt{ode15s} solvers of the MATLAB suite ODE \cite{shampine1997matlab}. In particular, we considered the index-1 DAE Robertson model describing the kinetics of an autocatalytic reaction \cite{Robertson1966,shampine1999solving}, an index-1 DAEs model describing the motion of a bead on a rotating needle \cite{shampine1999solving}, an index-1 DAEs model describing the dynamics of a power discharge control problem \cite{shampine1999solving}, the index-1 DAE chemical Akzo Nobel problem \cite{mazzia2012test,stortelder1998parameter}, the  Belousov-Zabotinsky chemical kinetics stiff ODEs \cite{belousov1951periodic,zhabotinsky1964periodical}, the Allen-Chan phase-field PDE describing the process of phase separation for generic interfaces \cite{allen1979microscopic} and Kuramoto-Sivashinsky PDE \cite{kuramoto1978diffusion,sivashinsky1977nonlinear,trefethen2000spectral}. The PDEs are discretized in space with central finite differences, thus resulting to a system of stiff ODEs \cite{trefethen2000spectral}. The results show that the proposed scheme outperforms the aforementioned solvers in several cases in terms of numerical approximation accuracy, especially in problems where stiffness and sharp gradients arise, while the required computational times are comparable for any practical purposes.

\section{Methods}
First, we describe the problem, and present some preliminaries on the use of machine learning for the solution of differential equations and on the concept of random projections for the approximation of continuous functions. We then present the proposed physics-informed random projection neural network scheme, and building on previous works \cite{rahimi2008weighted}, we prove that in principle the proposed PIRPNN can approximate with any given accuracy any unique continuously differentiable function that satisfies the Picard-Lindel\"of Theorem. Finally, we address (a) a variable step size scheme for adjusting the interval of integration and (b) a natural continuation method, to facilitate the convergence of Newton iterations, especially in regimes where stiffness and very sharp gradients arise.
\subsection{Description of the Problem\label{sec:preliminaries}}
We focus on IVPs of ODEs and index-1 DAEs that may also arise from the spatial discretization of PDEs using for example finite differences, finite elements and spectral methods. In particular, we consider IVPs in the linear implicit form of:
\begin{equation}\label{eq:DAE}
  \begin{array}{lll}
       \boldsymbol{M}\dfrac{d\boldsymbol{u}(t)}{dt}  =  \boldsymbol{f}(t,\boldsymbol{u}(t)), \qquad
      \boldsymbol{u}(0)  =  \boldsymbol{z}.
   \end{array} 
\end{equation}
$\boldsymbol{u}\in \mathbb{R}^{m}$ denotes the set of the states $\{u_1, u_2, \dots, u_i,\dots,  u_m\}$, $\boldsymbol{M}\in \mathbb{R}^{m\times m}$ is the so-called mass matrix with elements $M_{ij}$, $\boldsymbol{f}: \R \times \mathbb{R}^m \to \mathbb{R}^m$ denotes the set of Lipschitz continuous multivariate functions, say $f_i(t, u_1,u_2,...,u_m)$ in some closed domain $D$, and $\boldsymbol{z}\in \mathbb{R}^{m}$ are the initial conditions. When $\boldsymbol{M}=\boldsymbol{I}$, the system reduces to the canonical form.
The above formulation includes problems of DAEs when $\boldsymbol{M}$ is a singular matrix, including semiexplicit DAEs in the form \cite{shampine1999solving}:
\begin{equation}\label{eq:semiexplicitDAE}
  \begin{array}{lll}
       \dfrac{d\boldsymbol{u}(t)}{dt}  =  \boldsymbol{f}(t, \boldsymbol{u}(t),\boldsymbol{v}(t)), \qquad \boldsymbol{u}(0)  =  \boldsymbol{z},\\ 
       \boldsymbol{0}=\boldsymbol{g}(t,\boldsymbol{u}(t),\boldsymbol{v}(t)),
   \end{array} 
\end{equation}
where, we assume that the Jacobian $\nabla_{\boldsymbol{v}} \boldsymbol{g}$ is nonsingular, $\boldsymbol{f}: \R \times \mathbb{R}^{m-l} \times \mathbb{R}^{l}\to \mathbb{R}^{m-l}, \boldsymbol{g}: \R \times \mathbb{R}^{m-l} \times \mathbb{R}^{l}\to \mathbb{R}^{l}$.

In this work, we use physics-informed random projection neural networks for the numerical solution of the above type of IVPs which solutions are characterized both by sharp gradients and stiffness \cite{shampine1979user,shampine1999solving}.

Stiff problems are the ones which integration ``with a code that aims at non stiff problems proves conspicuously inefficient for no obvious reason (such a severe lack of smoothness in the equation or the presence of singularities)''\cite{shampine1979user}. 
At this point, it is worthy to note that stiffness is not connected to the presence of sharp gradients \cite{shampine1979user}. For example, at the regimes where the relaxation oscillations of the van der Pol model exhibit very sharp changes resembling discontinuities, the equations are not stiff. 

\subsection{Physics-informed machine learning for the solution of differential equations\label{sec:fnn}}
Let's assume a set of $n_{x}$ points $\boldsymbol{x}_i\in \Omega \subset \R^{d}$ of the independent (spatial) variables, thus defining the size grid in the domain $\Omega$, $n_{\partial \Omega}$ points along the boundary $\partial \Omega$ of the domain and $n_t$ points in the time interval, where the solution is sought. For our illustrations, let's consider a time-dependent PDE in the form of
\begin{equation}
  \dfrac{\partial u}{\partial t} = L(\boldsymbol{x}, u, \nabla u, \nabla^2 u, \dots),
  \label{PDE}
\end{equation}
where $L$ is the partial differential operator acting on $u$ satisfying the boundary conditions $B u =g, \mbox{ in } \partial \Omega$, where $B$ is the boundary differential operator.
Then, the solution with machine learning of the above PDE involves the solution of a minimization problem of the form:
\begin{align}
     \label{eq:cost}
    \min_{\boldsymbol{P},\boldsymbol{Q}} E(\boldsymbol{P},\boldsymbol{Q}) := 
    \sum_{i=1}^{n_x}\sum_{j=1}^{n_t} \left\| \dfrac{d \Psi}{dt} (\cdot) - L (\boldsymbol{x}_i,\Psi(\cdot), \nabla \Psi(\cdot), \nabla^2 \Psi(\cdot), \dots) \right\|^2 + \\\nonumber
    \sum_{j=1}^{n_{\partial \Omega}} \left\| B \Psi(\cdot) -g \right\|^2,
\end{align}
where $\Psi(\cdot):=\Psi(\boldsymbol{x}_i, t_j,  \mathcal{N}( \boldsymbol{x}_i, t_j, \boldsymbol{P},\boldsymbol{Q}))$ represents a machine learning constructed function approximating the solution $u$ at $\boldsymbol{x}_i$ at time $t_j$ and $\mathcal{N}( \boldsymbol{x}_i, t_j,  \boldsymbol{P},\boldsymbol{Q})$ is a machine learning algorithm; $\boldsymbol{P}$ contains the parameters of the machine learning scheme (e.g. for a FNN the internal weights $\boldsymbol{W}$, the biases $\boldsymbol{B}$, the weights between the last hidden and the output layer $\boldsymbol{W}^{o}$), $\boldsymbol{Q}$ contains the hyper-parameters (e.g. the parameters of the activation functions for a FNN, the learning rate, etc.). In order to solve the optimization problem~\eqref{eq:cost}, one usually needs quantities such as the derivatives of $\mathcal{N}(\boldsymbol{x}, \boldsymbol{P}, \boldsymbol{Q})$ with respect to $t,\boldsymbol{x}$ and the parameters of the machine learning scheme (e.g. the weights and biases for a FNN). These can be obtained in several ways, numerically using finite differences or other approximation schemes, or by symbolic or automatic differentiation \cite{baydin2018automatic,lu2021deepxde}). The above approach can be directly implemented also for solving systems of ODEs as these may also arise by discretizing in space PDEs.
Yet, for large scale problems even for the simple case of single layer networks, when the number of hidden nodes is large enough, the solution of the above optimization problem is far from trivial. Due to the ``curse of dimensionality" the computational cost is high, while when dealing with differential equations which solutions exhibit sharp gradients and/or stiffness several difficulties or even failures in the convergence have been reported ~\cite{larochelle2009exploring,wang2021understanding,wang2022and}.

\subsection{Random Projection Neural Networks\label{sec:rand_proj_rbfnn}}
Random projection neural networks (RPNN) including Random Vector Functional Link Networks (RVFLNs) \cite{igelnik1995stochastic,husmeier1999random}, Echo-State Neural Networks and Reservoir Computing \cite{jaeger2002adaptive}, and Extreme Learning Machines \cite{huang2006extreme,huang2014insight} have been introduced to tackle the ``curse of dimensionality'' encountered at the training phase. One of the fundamental works on random projections is the celebrated Johnson and Lindenstrauss Lemma \cite{johnson1984extensions} stating that for a matrix $\boldsymbol{W} \in \R^{d \times n}$ containing $n$, $\boldsymbol{w}$ points in $\R^{d}$, there exists a projection $\boldsymbol{F}: \mathbb{R}^d \rightarrow \mathbb{R}^k$ defined as:
\begin{equation}\label{JL}
\boldsymbol{F} (\boldsymbol{w}) = \dfrac{1}{\sqrt{k}} \boldsymbol{R} \boldsymbol{w},
\end{equation}
where $\boldsymbol{R} = [r_{ij}] \in \R^{k \times d}$ has components which are i.i.d.~random variables sampled from a normal distribution, which maps $\boldsymbol{W}$ into a random subspace of dimension $k \geq O \biggl(\dfrac{\ln{n}}{\epsilon^2}\biggr)$, where the distance between any pair of points in the embedded space $\boldsymbol{F}(\boldsymbol{W})$ is bounded in the interval $[1-\epsilon \quad 1+\epsilon]$.

Regarding single layer feedforward neural networks (SLFNNs), in order to improve the approximation accuracy, Rosenblatt \cite{rosenblatt1962perceptions} suggested the use of randomly parametrized activation functions for single layer structures. Thus, the approximation of a sufficiently smooth function $f(\boldsymbol{x}):\R^d\rightarrow \R$ is written as a linear combination of appropriately randomly parametrized family of $N$ basis functions $\phi_i: \R\times \R^p \rightarrow \R$  as
\begin{equation}\label{eq:randomexpansion}
    f(\boldsymbol{x})\simeq  f_{N}(\boldsymbol{x})= \sum_{i=1}^{N}w_{i}^{o}\phi_i(\boldsymbol{w}_{i}^T\boldsymbol{x}+b_i,\boldsymbol{p}_i),
\end{equation}
where $\boldsymbol{w}_{i} \in \R^d$ are the weighting coefficients of the inputs, $b \in \R$ are the biases and $ \boldsymbol{p}_{i} \in \R^p$ are the shape parameters of the basis functions.

More generally, for SLFNNs with $d$ inputs, $k$ outputs and $N$ neurons in the hidden layer, the random projection of $n$ samples in the $d$-dimensional input space $\boldsymbol{X}$ can be written in a matrix-vector notation as:
\begin{equation} \label{eq:lin_map}
    \boldsymbol{Y}_{N} = \boldsymbol{\Phi}_N {\boldsymbol{W}^{o}}, \quad \boldsymbol{Y} \in \mathbb{R}^{n\times k},
\end{equation}
where, $\boldsymbol{\Phi}_N \in \mathbb{R}^{n \times N}$ is a random matrix containing the outputs of the hidden layer as shaped by the $n$ samples in the $d$-dimensional space, the randomly parametrized internal weights $\boldsymbol{W}\in \R^{d\times N}$, the biases $\boldsymbol{b} \in \R^N$ and shape parameters of the $N$ activation functions; $\boldsymbol{W}^o \in \R^{N \times k}$ is the matrix containing the weights $w^{o}_{ij}$ between the hidden and the output layer.

In the early '90s, Barron \cite{barron1993universal} proved that for functions with integrable Fourier transformations, a random sample of the parameters of sigmoidal basis functions from an appropriately chosen distribution  results to an approximation error of the order of $1/N$. Igelnik and Pao \cite{igelnik1995stochastic} extending Barron's proof \cite{barron1993universal} for any family of $L^2$ integrable basis functions $\phi_i$ proved the following theorem for RVFLNs:
\begin{theorem}
\label{thm:SLFNN_universal}
SLFNNs as defined by Eq.(\ref{eq:randomexpansion}) with weights $\boldsymbol{w}_{i}$ and biases $b_i$ selected randomly from a uniform distribution and for any family of $L^2$ integrable basis functions $\phi_i$, are universal approximators of any Lipschitz continuous function $f$ defined in the standard hypercube $I^d$ and the expected rate of convergence of the approximation error, i.e., the distance between $f(\boldsymbol{x})$ and $f_N(\boldsymbol{x})$ on any compact set $K \subset I^d$ defined as:
\begin{equation}
\rho^2(f(\boldsymbol{x}), f_N(\boldsymbol{x}))=\mathbb{E}\biggl[\int_K (f(\boldsymbol{x})- f_N(\boldsymbol{x}))^2 d \boldsymbol{x}\biggr]
\end{equation}
is of the order of $(C/\sqrt{N})$, where $C=C(f,\phi_i, \beta, \Omega, \alpha, d)$; $\mathbb{E}$ denotes expectation over a probabilistic space $S(\mathcal{U}, \alpha)$, $\beta$ is the support of $\phi_i$ in $\prod_{i=1}^{d}(-\beta \boldsymbol{w}_i, \beta \boldsymbol{w}_i)$. 
\end{theorem}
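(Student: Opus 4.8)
The plan is to recast the random expansion \eqref{eq:randomexpansion} as a Monte-Carlo estimator of an integral representation of the target function, so that the expected $L^2$ error in $\rho^2$ becomes a variance that decays like $1/N$, yielding the advertised $C/\sqrt{N}$ rate for $\rho$ itself. This is the probabilistic (Maurey-type) strategy underlying Barron's argument, adapted here to an arbitrary $L^2$-integrable family $\phi_i$.

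First I would establish an integral representation of the Lipschitz target $f$ of the form
\[
f(\boldsymbol{x}) = \int_{\Theta} a(\boldsymbol{\theta})\, \phi(\boldsymbol{w}^T\boldsymbol{x}+b,\boldsymbol{p})\, d\boldsymbol{\theta}, \qquad \boldsymbol{\theta}=(\boldsymbol{w},b,\boldsymbol{p})\in\Theta,
\]
over the parameter space $\Theta$, valid on the compact set $K\subset I^d$. Following Barron, one natural route passes through the Fourier representation $f(\boldsymbol{x})=\int \hat f(\boldsymbol{\omega})\, e^{i\boldsymbol{\omega}^T\boldsymbol{x}}\, d\boldsymbol{\omega}$, rewriting the ridge components of the complex exponentials as superpositions of the basis functions $\phi_i$ restricted to $K$; the Lipschitz hypothesis on the standard hypercube controls $f$ and the relevant moments so that the representing density $a(\boldsymbol{\theta})$ is integrable. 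Next I would absorb $a$ and the reference measure into a probability law: writing the weights $\boldsymbol{w}_i$ and biases $b_i$ as drawn from the uniform distribution $\mathcal{U}$ on the scaled support $\prod_{i=1}^d(-\beta\boldsymbol{w}_i,\beta\boldsymbol{w}_i)$ with density $q$, one obtains
\[
f(\boldsymbol{x}) = \mathbb{E}_{\boldsymbol{\theta}\sim\mathcal{U}}\bigl[\, W(\boldsymbol{\theta})\,\phi(\boldsymbol{w}^T\boldsymbol{x}+b,\boldsymbol{p})\,\bigr], \qquad W(\boldsymbol{\theta}) := a(\boldsymbol{\theta})/q(\boldsymbol{\theta}),
\]
where $W$ plays the role of the output weight and the scaling $\alpha$ and support $\beta$ of $S(\mathcal{U},\alpha)$ are tuned so that $W$ is essentially bounded, $|W(\boldsymbol{\theta})|\le B$, on the support.

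With this representation in hand, I would draw $\boldsymbol{\theta}_1,\dots,\boldsymbol{\theta}_N$ i.i.d.\ from $\mathcal{U}$, set $w_i^o = W(\boldsymbol{\theta}_i)/N$, and form $f_N(\boldsymbol{x})=\tfrac{1}{N}\sum_{i=1}^N W(\boldsymbol{\theta}_i)\,\phi(\boldsymbol{w}_i^T\boldsymbol{x}+b_i,\boldsymbol{p}_i)$. By construction $\mathbb{E}[f_N(\boldsymbol{x})]=f(\boldsymbol{x})$ pointwise, so the estimator is unbiased and the expected squared error is exactly the variance of the average of $N$ i.i.d.\ terms:
\[
\rho^2(f,f_N)=\mathbb{E}\biggl[\int_K (f-f_N)^2\, d\boldsymbol{x}\biggr]=\frac{1}{N}\int_K \mathrm{Var}_{\boldsymbol{\theta}}\bigl(W(\boldsymbol{\theta})\,\phi(\boldsymbol{w}^T\boldsymbol{x}+b,\boldsymbol{p})\bigr)\, d\boldsymbol{x}.
\]
Using $|W|\le B$ together with the $L^2$-integrability of the $\phi_i$ (so that $\int_K \mathbb{E}_{\boldsymbol{\theta}}[\phi^2]\, d\boldsymbol{x}\le M<\infty$), the variance integral is bounded by a finite constant, giving $\rho^2(f,f_N)\le C^2/N$, hence $\rho(f,f_N)=O(C/\sqrt{N})$; tracking each bound through the argument exhibits the dependence $C=C(f,\phi_i,\beta,\Omega,\alpha,d)$.

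The main obstacle is the first step: constructing the integral representation with an integrable, and after normalization bounded, density $a(\boldsymbol{\theta})$ for an \emph{arbitrary} Lipschitz $f$ and an \emph{arbitrary} $L^2$-integrable family $\phi_i$. Barron's original construction leans on the sigmoidal structure and the finite first Fourier moment; extending it to a general $\phi_i$, as Igelnik and Pao do, requires showing the family is rich enough to represent $f$ on $K$ and that the representing weights stay controlled after the change of measure to the uniform law. This is precisely where the support $\beta$, the domain $\Omega$, and the scaling $\alpha$ must be chosen so that $W$ remains bounded and the variance integral converges; once that control is secured, the Monte-Carlo variance bound delivers the rate essentially for free.
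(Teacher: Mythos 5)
The paper does not actually prove this statement: Theorem 1 is quoted verbatim as a result of Igelnik and Pao \cite{igelnik1995stochastic}, extending Barron \cite{barron1993universal}, and no proof is given in the text. So the relevant comparison is with the cited source rather than with an in-paper argument. Your second half --- the Monte-Carlo/Maurey step --- is correct and is indeed the engine of the cited proof: once $f(\boldsymbol{x})=\mathbb{E}_{\boldsymbol{\theta}}[W(\boldsymbol{\theta})\phi(\cdot\,;\boldsymbol{\theta})]$ with $|W|\le B$ and $\int_K\mathbb{E}[\phi^2]\,d\boldsymbol{x}<\infty$, the unbiasedness and the $\mathrm{Var}/N$ computation give $\rho^2\le C^2/N$, i.e.\ the advertised $C/\sqrt{N}$ rate, with the constant absorbing $f,\phi_i,\beta,\Omega,\alpha,d$ exactly as in the statement.

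The genuine gap is the step you yourself flag as ``the main obstacle'': the existence of an integral representation of an arbitrary Lipschitz $f$ over the \emph{uniform} law on the parameter box, with a bounded importance weight $W$, for an \emph{arbitrary} $L^2$-integrable family $\phi_i$. This is not a technicality --- it is the actual content of the theorem, and the Fourier route you sketch does not deliver it: Barron's argument exploits the ridge/sigmoidal structure (complex exponentials decomposed into sigmoids along each frequency direction) together with a finite first Fourier moment, neither of which is available for a general $L^2$ family, and a Lipschitz $f$ on $I^d$ need not even have an integrable Fourier transform. Igelnik and Pao close this gap by a different construction: they build approximate identities (delta-like kernels, essentially products of differences of scaled basis functions) from the $\phi_i$ themselves, represent $f$ as a limit of such kernel smoothings plus a Riemann-sum discretization, and only then randomize; the parameters $\alpha$, $\Omega$ and $\beta$ are precisely the knobs that trade the kernel bias against the boundedness of $W$ and the variance integral. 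Without carrying out some version of that construction (or an equivalent density argument showing the uniform mixture of the $\phi_i$ spans a class containing $f$ with controlled coefficients), your proposal proves the rate only for functions already assumed to lie in the mixture class --- which is the hypothesis of the Rahimi--Recht theorem quoted immediately after (Theorem 2 with its class $\mathcal{S}_p$), not the conclusion of Theorem 1.
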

Similar results for one-layer schemes have been also reported in other studies (see e.g. \cite{rahimi2008weighted,gorban2016approximation}). Rahimi and Recht \cite{rahimi2008weighted} proved the following Theorem:
\begin{theorem}
Let $p$ be a probability distribution drawn i.i.d. on $\mathcal{U}$ and consider the basis functions $\phi(x,\alpha):\mathcal{X}\times \mathcal{U} \rightarrow R$ that satisfy $sup_{x,\alpha}\phi(x,\alpha)\leq 1$. Define the set of functions
\begin{equation}\label{infsolution}
\mathcal{S}_p\equiv \biggl\{f(x)=\int w(\alpha)\phi(x;\alpha) d\alpha, |w(\alpha)| \le C p(\alpha)\biggr\},
\end{equation}
and let $\mu$ be a measure on $\mathcal{X}$. Then, if one takes a function $f$ in $S_p$, and $N$ values $\alpha_1,\alpha_2,\dots,\alpha_N$ of the shape parameter $\alpha$ are drawn i.i.d. from $p$, for any $\delta >0$ with probability at least $1-\delta $ over $\alpha_1,\alpha_2,\dots,\alpha_N$, there exists a function defined as $f_N=\sum_{j=1}^N w_j \phi_j(x;\alpha_j), |\alpha_j|\leq \dfrac{C}{N}$ so that
\begin{equation}\label{uniformconv}
    \int_{\mathcal{X}}(f(\boldsymbol{x}), f_N(\boldsymbol{x}))^2d\mu(x)\leq\dfrac{C}{\sqrt{N}}(1+\sqrt{2\log \dfrac{1}{\delta}}).
\end{equation}
\end{theorem}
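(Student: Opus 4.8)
The plan is to recognize the target $f$ as an expectation with respect to the sampling distribution $p$, so that the random-feature sum $f_N$ becomes an unbiased Monte-Carlo estimator of $f$ in the Hilbert space $L^2(\mu)$; the stated rate then follows from a variance bound combined with a bounded-differences concentration argument. Throughout I read the left-hand side of \eqref{uniformconv} as the $L^2(\mu)$ distance $\|f-f_N\|_{L^2(\mu)}=\bigl(\int_{\mathcal X}(f(x)-f_N(x))^2\,d\mu(x)\bigr)^{1/2}$, which is the quantity whose order matches the right-hand side $C/\sqrt N$; I also assume $\mu$ is a probability measure and that $p(\alpha)>0$ on the support of $w$.

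First I would perform the importance-weighting substitution. For $f\in\mathcal S_p$ set $\beta(\alpha):=w(\alpha)/p(\alpha)$; the constraint $|w(\alpha)|\le C p(\alpha)$ gives $|\beta(\alpha)|\le C$, and
\begin{equation}
f(x)=\int w(\alpha)\phi(x;\alpha)\,d\alpha=\int \beta(\alpha)\phi(x;\alpha)\,p(\alpha)\,d\alpha=\mathbb E_{\alpha\sim p}\bigl[\beta(\alpha)\phi(x;\alpha)\bigr].
\end{equation}
Hence $f=\mathbb E_{\alpha\sim p}[F_\alpha]$, where $F_\alpha:=\beta(\alpha)\phi(\cdot;\alpha)\in L^2(\mu)$ is a random element with $\|F_\alpha\|_{L^2(\mu)}^2=\int\beta(\alpha)^2\phi(x;\alpha)^2\,d\mu(x)\le C^2$, using $\sup_{x,\alpha}|\phi|\le 1$ and $\mu(\mathcal X)=1$. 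Drawing $\alpha_1,\dots,\alpha_N$ i.i.d. from $p$, I set $f_N:=\tfrac1N\sum_{j=1}^N F_{\alpha_j}=\sum_{j=1}^N w_j\,\phi(\cdot;\alpha_j)$ with $w_j:=\beta(\alpha_j)/N$, so that $|w_j|\le C/N$ exactly as required.

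Next I would establish the in-expectation rate. Since the $F_{\alpha_j}$ are i.i.d. with mean $f$, the Hilbert-space variance identity gives
\begin{equation}
\mathbb E\bigl[\|f_N-f\|_{L^2(\mu)}^2\bigr]=\frac1N\,\mathbb E\bigl[\|F_\alpha-f\|_{L^2(\mu)}^2\bigr]\le \frac1N\,\mathbb E\bigl[\|F_\alpha\|_{L^2(\mu)}^2\bigr]\le \frac{C^2}{N},
\end{equation}
and by Jensen's inequality $\mathbb E[\|f_N-f\|_{L^2(\mu)}]\le C/\sqrt N$. To upgrade this to the high-probability statement I would apply McDiarmid's bounded-differences inequality to $g(\alpha_1,\dots,\alpha_N):=\|f_N-f\|_{L^2(\mu)}$. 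Replacing one $\alpha_j$ by $\alpha_j'$ changes $f_N$ by $\tfrac1N(F_{\alpha_j'}-F_{\alpha_j})$, whose norm is at most $2C/N$, so by the triangle inequality $g$ has bounded-difference constant $c_j=2C/N$. McDiarmid then yields $\mathbb P[g-\mathbb E g\ge t]\le\exp(-2t^2/\sum_j c_j^2)=\exp(-Nt^2/(2C^2))$; choosing $t=C\sqrt{2\log(1/\delta)/N}$ makes the right-hand side equal to $\delta$. Combining with the mean bound, with probability at least $1-\delta$,
\begin{equation}
\|f_N-f\|_{L^2(\mu)}\le \mathbb E[g]+t\le \frac{C}{\sqrt N}+C\sqrt{\frac{2\log(1/\delta)}{N}}=\frac{C}{\sqrt N}\Bigl(1+\sqrt{2\log\tfrac1\delta}\Bigr),
\end{equation}
which is \eqref{uniformconv}.

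I expect the concentration step to be the main obstacle to make fully rigorous: one must verify that $g$ is a genuine function of the $N$ independent draws with the claimed bounded-difference constant (which rests on the uniform bounds $|\beta|\le C$ and $\|\phi\|_\infty\le1$ and on $\mu$ being finite), and confirm measurability and integrability of $F_\alpha$ as an $L^2(\mu)$-valued random element, so that both the variance identity and the interchange of integration order in the first step are justified. The remaining steps are the routine Monte-Carlo variance computation and the tuning of the McDiarmid deviation to the confidence level $\delta$.
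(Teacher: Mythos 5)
Your proof is correct. Note, however, that the paper itself offers no proof of this statement: it is quoted verbatim (as Theorem~2) from Rahimi and Recht \cite{rahimi2008weighted}, so there is no in-paper argument to compare against. Your reconstruction --- rewriting $f=\mathbb{E}_{\alpha\sim p}[\beta(\alpha)\phi(\cdot;\alpha)]$ with $\beta=w/p$, $|\beta|\le C$, taking $f_N$ as the empirical average so that $w_j=\beta(\alpha_j)/N$ satisfies $|w_j|\le C/N$, bounding the mean $L^2(\mu)$ error by $C/\sqrt{N}$ via the Hilbert-space variance identity, and upgrading to a high-probability bound with McDiarmid applied to $\|f_N-f\|_{L^2(\mu)}$ --- is precisely the argument in the cited source. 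You also silently repair two typos in the statement as printed: the left-hand side of the error bound should be the $L^2(\mu)$ distance (not its square), and the condition $|\alpha_j|\le C/N$ should read $|w_j|\le C/N$; your construction delivers exactly the corrected claims. The caveats you flag (measurability/integrability of the $L^2(\mu)$-valued random element, finiteness of $\mu$) are the right ones and are handled in the standard way under the stated boundedness hypotheses.
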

For the so-called Extreme-Learning machines (ELMs), which similarly to RVFLNs are FNNs with randomly assigned internal weights and biases of the hidden layers, Huang et al. \cite{huang2006extreme,huang2014insight} has proved the following theorem:
\begin{theorem}
\label{thm:ELM_universal}
For any set of $n$ input-output pairs $(\boldsymbol{x}_i, \boldsymbol{y}_i), i=1,2,..N$ ($\boldsymbol{x}_i\in \R^d, \boldsymbol{y}_i\in \R^k$), the projection matrix  $\boldsymbol{\Phi}_N \in \mathbb{R}^{N \times N}$ in Eq.(\ref{eq:lin_map}) is invertible and
\begin{equation}
   \lVert {\boldsymbol{\Phi}_N \boldsymbol{W}^{o}} - \boldsymbol{Y}_N\rVert = 0,
\end{equation}
with probability 1 for $N=n$, $\boldsymbol{W}$, $\boldsymbol{b}$ randomly chosen from any probability distribution, and an activation function that is infinitely differentiable. 
\end{theorem}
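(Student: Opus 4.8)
The plan is to reduce everything to the almost-sure nonsingularity of the square matrix $\boldsymbol{\Phi}_N$ (here $N=n$, so $\boldsymbol{\Phi}_N\in\R^{N\times N}$), since the interpolation claim is then immediate: once $\boldsymbol{\Phi}_N$ is invertible, the choice $\boldsymbol{W}^o=\boldsymbol{\Phi}_N^{-1}\boldsymbol{Y}_N$ gives $\boldsymbol{\Phi}_N\boldsymbol{W}^o-\boldsymbol{Y}_N=\boldsymbol{0}$, hence $\lVert\boldsymbol{\Phi}_N\boldsymbol{W}^o-\boldsymbol{Y}_N\rVert=0$. So the entire content is showing that the ``bad set'' $\{(\boldsymbol{W},\boldsymbol{b}):\det\boldsymbol{\Phi}_N=0\}$ is negligible under the sampling law.

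First I would fix the geometry of the data. Since the $\boldsymbol{x}_i$ are distinct and the internal weights $\boldsymbol{w}_i$ are drawn from a continuous distribution, for each hidden node the scalar pre-activations $\boldsymbol{w}_i\cdot\boldsymbol{x}_1,\dots,\boldsymbol{w}_i\cdot\boldsymbol{x}_N$ are pairwise distinct with probability one: the coincidence set $\{\boldsymbol{w}_i\cdot(\boldsymbol{x}_k-\boldsymbol{x}_{k'})=0\}$ is a finite union of hyperplanes and thus Lebesgue-null. I would then regard the $i$-th column of $\boldsymbol{\Phi}_N$ as the vector-valued map $\boldsymbol{c}(b_i)=(\phi(\boldsymbol{w}_i\cdot\boldsymbol{x}_1+b_i),\dots,\phi(\boldsymbol{w}_i\cdot\boldsymbol{x}_N+b_i))^{\top}$ of the single bias $b_i$, freezing all other parameters.

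The core lemma, following the argument of Huang and collaborators, is that $\boldsymbol{c}(b_i)$ cannot remain inside any linear subspace of dimension strictly less than $N$ as $b_i$ ranges over an interval. I would argue by contradiction: if it did, there would exist a fixed nonzero $\boldsymbol{\alpha}$ with $\sum_{k=1}^{N}\alpha_k\,\phi(\boldsymbol{w}_i\cdot\boldsymbol{x}_k+b_i)\equiv 0$ on that interval. Because $\phi$ is infinitely differentiable, I may differentiate this identity in $b_i$ to every order $p$, obtaining $\sum_{k=1}^{N}\alpha_k\,\phi^{(p)}(\boldsymbol{w}_i\cdot\boldsymbol{x}_k+b_i)\equiv 0$ for all $p\ge 0$; evaluating these at a single point yields a homogeneous system whose coefficient matrix is a generalized Wronskian built from the distinct shifts $\boldsymbol{w}_i\cdot\boldsymbol{x}_k$, and a nontrivial $\boldsymbol{\alpha}$ forces a linear dependence among the shifted derivatives of $\phi$ incompatible with the distinctness of the shifts. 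Granting the lemma, the ``bad'' biases $b_i$—those placing the $i$-th column in the span of the others—form the zero set of the single nonzero function $b_i\mapsto\boldsymbol{\alpha}\cdot\boldsymbol{c}(b_i)$; adding columns one at a time then builds a full-rank matrix, and the complementary bad set in parameter space is Lebesgue-null, so $\det\boldsymbol{\Phi}_N\neq0$ off a null set and invertibility holds with probability one.

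The delicate point is precisely this degeneracy step: an arbitrary $C^{\infty}$ function may vanish on a positive-measure set without being identically zero, and shifted copies of some smooth functions (e.g.\ exponentials) are genuinely linearly dependent, so the lemma is not literally valid for every smooth $\phi$ and must be read together with the randomness of the weights $\boldsymbol{w}_i$. The cleanest route—and the one directly relevant here—exploits that the Gaussian kernel is \emph{real-analytic}: the map $(\boldsymbol{W},\boldsymbol{b})\mapsto\det\boldsymbol{\Phi}_N$ is then real-analytic and not identically zero (nonzero for at least one parameter choice by the distinct-shift generalized-Vandermonde structure), and the zero set of a nontrivial real-analytic function has Lebesgue measure zero. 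This upgrades ``measure zero'' to ``probability zero'' for any continuous sampling distribution and closes the argument; I would also remark that the hypothesis must be read as a \emph{continuous} (atomless) distribution, since a law with atoms could concentrate mass on the null bad set.
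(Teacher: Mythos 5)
The paper does not actually prove this statement: Theorem \ref{thm:ELM_universal} is quoted from Huang et al.\ \cite{huang2006extreme,huang2014insight} and used as a black box, so there is no in-paper proof to compare yours against. Your reconstruction is essentially the standard ELM interpolation argument (reduce to almost-sure nonsingularity of the square matrix, then build up rank column by column using the bias, after observing that distinct inputs and a continuous weight law make the pre-activations pairwise distinct off a finite union of hyperplanes), and you correctly identify its two weak points: the differentiation/Wronskian step does not by itself yield a contradiction for every $C^{\infty}$ activation (shifted exponentials really are linearly dependent), and ``any probability distribution'' cannot be taken literally (an atom, or a law supported where a compactly supported $\phi$ vanishes at all pre-activations, kills the conclusion). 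Your repair via real-analyticity --- $\det\boldsymbol{\Phi}_N$ is real-analytic in $(\boldsymbol{W},\boldsymbol{b})$, not identically zero, hence has a Lebesgue-null zero set, hence probability zero under any absolutely continuous sampling law --- is the cleanest complete route for the Gaussian kernel the paper actually uses. The one step you should make explicit rather than gesture at is the witness of non-vanishing: for instance, take all directions equal, $\boldsymbol{w}_i=\boldsymbol{w}$ with $\boldsymbol{w}$ separating the $\boldsymbol{x}_k$, and distinct biases, so that $\boldsymbol{\Phi}_N$ becomes a one-dimensional Gaussian interpolation matrix with distinct centres and distinct nodes, which is nonsingular by positive-definiteness of the Gaussian kernel (Micchelli/Schoenberg). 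With that witness supplied your argument is complete for the setting the paper needs, and it is in fact more careful about the hypotheses (distinctness of the $\boldsymbol{x}_i$, atomlessness of the law, restriction to analytic rather than merely smooth activations) than the statement as transcribed here.
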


\subsection{The Proposed Physics-Informed RPNN-based method for the solution of ODES and index-1 DAEs\label{sec:our_method}}
Here, we propose a physics-informed machine learning method based on random projections for the solution of IVPs of a system given by Eq.(\ref{eq:DAE})/(\ref{eq:semiexplicitDAE}) in $n$ collocation points in an interval, say $[t_0 \quad t_{end}]$. According to the previous notation, for this problem we have $d=k=1$. Thus, Eq.(\ref{eq:lin_map}) reads:
\begin{equation} \label{eq:lin_map3}
    \boldsymbol{Y}_N = \boldsymbol{\Phi}_N {\boldsymbol{w}^{o}}, \quad \boldsymbol{Y}_N \in \mathbb{R}^{n}, \quad \boldsymbol{\Phi}_N \in\mathbb{R}^{n\times N}, \quad \boldsymbol{w}^{o} \in\mathbb{R}^{N}
\end{equation}
Thus, the output of the RPNN is spanned by the range $\mathcal{R}(\boldsymbol{\Phi})$, i.e. the column vectors of $\boldsymbol{\Phi}_N$, say $\boldsymbol{\phi}_i \in \R^n$. Hence, the output of the RPNN can be written as:
\begin{equation} \label{eq:lin_map4}
    \boldsymbol{Y}_N = \sum_{i=1}^{N}w_{i}^{o} \boldsymbol{\phi}_i
\end{equation}
For an IVP of $m$ variables, we construct $m$ such PIRPNNs. We denote by $\boldsymbol{\Psi}(t,\boldsymbol{W},\boldsymbol{W}^{o},\boldsymbol{P})$ the set of functions $\Psi_{Ni}(t,\bw^{o}_i,\boldsymbol{p}_i), i=1, 2, \dots ,m$ that approximate the solution profile $u_i$ at time $t$, defined as:
\begin{equation}
    \Psi_{Ni}(t,\boldsymbol{w}_i,\bw^{o}_i,\boldsymbol{p}_i) = z_i+(t-t_0) {\bw^{o}_{i}}^{T}\boldsymbol{\Phi}_{Ni}(t,\boldsymbol{w}_i,\boldsymbol{p}_i),
    \label{eq:trsolsys}
\end{equation}
where  $\boldsymbol{\Phi}_{Ni}(t,\boldsymbol{w}_i,\boldsymbol{p}_i) \in \R^{N}$ is the column vector containing the values of the $N$ basis functions at time $t$ as shaped by $\bw_i$ and $\boldsymbol{p}_i$ containing the values of the parameters of the $N$ basis functions and  $\bw^{o}_i=[w^{o}_{1i} \; w^{o}_{2i} \; \ldots \; w^{o}_{Ni}]^T \in \R^{N}$ is the vector containing the values of the output weights of the $i$-th PIRPNN network. Note that the above set of functions are continuous functions of $t$ and satisfy explicitly the initial conditions.

For index-1 DAEs, with say $M_{ij}=0, \forall i\ge l,$ $j=1,2,\dots, m$, or in the semiexplicit form of (\ref{eq:semiexplicitDAE}), there are no explicit initial conditions $z_i$ for the variables $u_i,$ $i=l,l+1,\dots, m$, or the variables $\boldsymbol{v}$ in (\ref{eq:semiexplicitDAE}): these values have to satisfy the constraints $f_i(t,\boldsymbol{u})=0, i\ge l,$ (equivalently $\boldsymbol{0}=\boldsymbol{g}(t,\boldsymbol{u},\boldsymbol{v})$) $\forall t$, thus one has to start with consistent initial conditions.  Assuming that the corresponding Jacobian matrix of the $f_i(t,\boldsymbol{u})=0, i=l,l+1,\dots, m$ with respect to $u_i$, and for the semiexplicit form (\ref{eq:semiexplicitDAE}), $\nabla_{\boldsymbol{v}} \boldsymbol{g}$, is not singular, one has to solve initially at $t=0$, using for example Newton-Raphson iterations, the above nonlinear system of $m-l$ algebraic equations in order to find a consistent set of initial values.
Then, one can write the approximation functions of the $u_i,  i=k,k+1,\dots, m$/$\boldsymbol{v}$ variables as in Eq.\eqref{eq:trsolsys}.

With $n$ collocation points in $[t_0 \quad t_{end}]$, by fixing the values of the interval weights $\boldsymbol{w}_i$ and the shape parameters $\boldsymbol{p}_i$, the loss function that we seek to minimize with respect to the unknown coefficients $\bw^{o}_i$ is given by:
\begin{equation}
\mathcal{L}(\boldsymbol{W}^o)=\sum_{j=1}^{n} 
\left(\boldsymbol{M}\dfrac{d\boldsymbol{\Psi}}{dt}(t_j,\boldsymbol{W},\boldsymbol{W}^{o},\boldsymbol{P})- \boldsymbol{f}(t_j,\boldsymbol{\Psi}(t_j,\boldsymbol{W}, \boldsymbol{W}^{o},\boldsymbol{P})) \right)^2,
\label{costfun}
\end{equation}

%
When the system of ODEs/DAEs results from the spatial discretization of PDEs, we assume that the corresponding boundary conditions have been appropriately incorporated into the resulting algebraic equations explicitly or otherwise can be added in the loss function as algebraic constraints. 

Based on the above notation, we construct $m$ PIRPNNs, taking, for each network $\mathcal{N}_i$, $N$ Gaussian RBFs which, for $j=1,\dots,N,$ $i=1,\dots,m$, are given by:
\begin{equation}
 g_{ji}(t,w_{ji},b_{ji},\alpha_{ji},c_{j}) =e^{ -\alpha_{ji}( w_{ji}t+b_{ji}-c_{j})^2}.
\label{eq:gaussian_RBF}
\end{equation}
The values of the (hyper) parameters, namely $w_{ji}$, $b_{ji}, c_{j}$ are set as:
\[w_{ji}=1, \quad b_{ij}=0, \quad  c_{j}=t_j= t_0 + (j-1)\frac{t_{end}-t_0}{N-1},\]
while the values of the shape parameters $\alpha_{ji}>0$ are sampled from an appropriately chosen uniform distribution.
Under the above assumptions, the time derivative of $\Psi_{Ni}$ is given by:
\begin{equation}
    \dfrac{d \Psi_{Ni}}{d t} =
    \sum_{j=1}^N w^{o}_{ji} e^{ -\alpha_{ji}(t-t_{j})^2}-
    2(t-t_0)\sum_{j=1}^{N} \alpha_{ji} w^{o}_{ji} (t-t_{j}) e^{ -\alpha_{ji}(t-t_{j})^2}.
\label{eq:dirGaussian_RBF}
\end{equation}  

\subsubsection{Approximation with the PIRPNN}\label{sec:convergence}
At this point, we note that in Theorems \ref{thm:SLFNN_universal} and \ref{thm:ELM_universal}, the universal approximation property is based on the random base expansion given by  Eq.(\ref{eq:randomexpansion}), while in our case, we have a slightly different expansion given by Eq.(\ref{eq:trsolsys}). Here, we show that the PIRPNN given by Eq.(\ref{eq:trsolsys}) is a universal approximator of the solution $\boldsymbol{u}$ of the ODEs in canonical form or of the index-1 DAEs in the semiexplicit form (\ref{eq:semiexplicitDAE}).
\begin{prop}
For the IVP problem (\ref{eq:DAE}) in the canonical form or in the semiexplicit form (\ref{eq:semiexplicitDAE}), the PIRPNN solution $\Psi_{Ni}$ given by Eq.(\ref{eq:trsolsys}) with $N$ Gaussian basis functions defined by Eq.(\ref{eq:gaussian_RBF}), and the values of the shape parameters $\alpha_{ji}$ drawn i.i.d. from a uniform distribution converges uniformly to the solution profile $\boldsymbol{u}(t)$ in a closed time interval $[t_0 \quad t_{end}]$.
\end{prop}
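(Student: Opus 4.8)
The plan is to reduce the claim to a one-dimensional universal-approximation statement by stripping off the two structural features that are hard-wired into the ansatz \eqref{eq:trsolsys}, namely the additive initial value $z_i$ and the multiplicative prefactor $(t-t_0)$. First I would secure existence, uniqueness and regularity of the target: since $\boldsymbol{f}$ is Lipschitz continuous on the closed domain $D$, the Picard--Lindel\"of theorem yields a unique solution $\boldsymbol{u}(t)$ of the canonical IVP \eqref{eq:DAE} on $[t_0,t_{end}]$; for the semiexplicit system \eqref{eq:semiexplicitDAE} the nonsingularity of $\nabla_{\boldsymbol{v}}\boldsymbol{g}$ lets one first solve the algebraic constraints for consistent initial data and then treat the reduced ODE the same way, while the implicit function theorem renders the algebraic component $\boldsymbol{v}(t)$ as smooth as the differential ones. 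Because the right-hand side is continuous, each solution component is then $C^1$ on the interval, which is all the subsequent argument needs.

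Second, for each component I would introduce the shifted, normalized target
\[
g_i(t)=\frac{u_i(t)-z_i}{t-t_0}=\frac{1}{t-t_0}\int_{t_0}^{t}u_i'(s)\,ds,\qquad t\in(t_0,t_{end}],
\]
together with the value $g_i(t_0):=u_i'(t_0)$. Since $u_i(t_0)=z_i$ and $u_i\in C^1$, the integral representation exhibits $g_i$ as the running average of a continuous function, hence $g_i$ extends to a continuous (indeed Lipschitz) function on the whole closed interval. Approximating $u_i$ by $\Psi_{Ni}$ is therefore equivalent to approximating the continuous function $g_i$ by the random Gaussian expansion $\sum_{j=1}^{N}w^{o}_{ji}\,e^{-\alpha_{ji}(t-t_j)^2}$, and the transfer back is controlled, because the prefactor is bounded on the compact interval, by
\[
\sup_{t\in[t_0,t_{end}]}\bigl|u_i(t)-\Psi_{Ni}(t)\bigr|
\;\le\;(t_{end}-t_0)\,\sup_{t\in[t_0,t_{end}]}\Bigl|g_i(t)-\textstyle\sum_{j=1}^{N}w^{o}_{ji}\,e^{-\alpha_{ji}(t-t_j)^2}\Bigr|.
\]

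Third, I would invoke the universal-approximation machinery assembled in Section~\ref{sec:rand_proj_rbfnn}, applying Theorem~\ref{thm:SLFNN_universal} (or the Rahimi--Recht theorem) to the continuous one-dimensional target $g_i$ on $K=[t_0,t_{end}]$, with the $L^2$-integrable Gaussian kernels playing the role of the basis functions $\phi_i$ and the shape parameters $\alpha_{ji}$ as the randomly sampled parameters. This produces, with probability one as $N\to\infty$, output weights $w^{o}_{ji}$ that drive the right-hand side of the displayed bound to zero, whence $\Psi_{Ni}\to u_i$ on the interval. I would additionally note that, since the Gaussians are smooth, the same construction can be carried out in $C^1$, so that $\Psi_{Ni}'\to u_i'$ as well and the ODE/DAE residual in the loss \eqref{costfun} vanishes in the limit, tying the approximation statement back to the physics-informed formulation.

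The main obstacle I anticipate is matching the cited theorems to the precise PIRPNN architecture, and it has two faces. First, those theorems certify convergence in the expected $L^2$ sense, whereas the proposition asserts \emph{uniform} convergence; I would close this gap by exploiting the smoothness of the Gaussians, since the span of translated Gaussians whose centers $c_j=t_j$ populate the refining grid is dense in $C[t_0,t_{end}]$ in the sup-norm (a Stone--Weierstrass/RBF density argument), so that the $L^2$ rate can be upgraded to uniform control via equicontinuity of the Lipschitz approximants. Second, in the PIRPNN the kernel centers are deterministic and gridded while only the shapes $\alpha_{ji}$ are drawn i.i.d., which departs from the fully i.i.d.\ weight/bias sampling assumed in Theorem~\ref{thm:SLFNN_universal}; I would argue that fixing a dense grid of centers only strengthens the approximation power, so that the randomness in $\alpha_{ji}$ need only guarantee, almost surely, a dictionary rich enough to realize this dense span. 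Reconciling these two deviations rigorously is where the real work lies.
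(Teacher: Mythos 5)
Your proposal takes essentially the same route as the paper's proof: your normalized target $g_i(t)=(u_i(t)-z_i)/(t-t_0)$ is exactly the paper's normalized integral $I_n(t)=\int_0^1 f_i(\tau(t-t_0)+t_0,\boldsymbol{u}(\tau(t-t_0)+t_0))\,d\tau$ obtained via the substitution $\tau=(s-t_0)/(t-t_0)$, and both arguments then reduce the claim to approximating this continuous function by the random Gaussian expansion and invoke the random-projection approximation theorems of Section~\ref{sec:rand_proj_rbfnn}, with the semiexplicit DAE case handled identically through the implicit function theorem. The two obstacles you flag at the end (the expected-$L^2$ versus uniform convergence mismatch, and the deterministic gridded centers with only the shape parameters sampled i.i.d.) are genuine, but the paper's own proof leaves them equally unaddressed, so they do not distinguish your argument from the published one.
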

\begin{proof}
Assuming that the system in Eq.\eqref{eq:DAE} can be written in the canonical form, the Picard-Lindel\"of Theorem \cite{collins1988differential}  holds true, then it exists a unique continuously differentiable function defined on a closed time interval $[t_0 \quad t_{end}]$ given by:
\begin{equation}\label{solintegral}
 u_i(t)=z_i+\int_{t_0}^{t}f_i(s,\boldsymbol{u}(s))ds, \quad i=1,2,\dots m
\end{equation}
From Eq.(\ref{eq:trsolsys}) we have:
\begin{equation}\label{approxsolintegral}
\Psi_{Ni}(t)=z_i+(t-t_0)\sum_{j=1}^{N}w_{j}^{o}e^{ -\alpha_{j}( t-t_{j})^2}.
\end{equation}
By the change of variables, $\tau=\dfrac{s-t_0}{t-t_0}$, the integral in Eq.(\ref{solintegral}) becomes
\begin{equation}\label{normalizedint}
\int_{t_0}^{t}f_i(s,\boldsymbol{u}(s))ds=(t-t_0)\int_{0}^{1}f_i(\tau(t-t_0)+t_0,\boldsymbol{u}(\tau(t-t_0)+t_0))d\tau.
\end{equation}
Hence, by Eqs.(\ref{solintegral}),(\ref{approxsolintegral}),(\ref{normalizedint}), we have:
\begin{equation}
    I_n(t)\equiv\int_{0}^{1}f_i(\tau(t-t_0)+t_0,\boldsymbol{u}(\tau(t-t_0)+t_0))d\tau\approx \sum_{j=1}^{N}w_{j}^{o}e^{ -\alpha_{j}( t-t_{j})^2}.
\end{equation}
Thus, in fact, upon convergence, the PIRPNN provides an approximation of the normalized integral. By Theorem 3, we have that in the interval $[t_0 \quad t_{end}]$, the PIRPNN with the shape parameter of the Gaussian kernel drawn i.i.d. from a uniform distribution provides, a uniform approximation of the integral in  Eq.(\ref{solintegral}) in terms of a Monte Carlo integration method as also described in \cite{igelnik1995stochastic}. 

Hence, as the initial conditions are explicitly satisfied by $\Psi_{Ni}(t)$, we have from Eq.({\ref{uniformconv}}) an upper bound for the uniform approximation of the solution profile $u_i$ with probability $1-\delta$.

For index-1 DAEs in the semiexplicit form of (\ref{eq:semiexplicitDAE}), by the implicit function theorem, we have that the DAE system is \textit{in principle} equivalent with the ODE system in the canonical form:
\begin{equation}
\dfrac{d\boldsymbol{u}(t)}{dt}  =  \boldsymbol{f}(t, \boldsymbol{u}(t),\boldsymbol{\mathcal{H}}(t,\boldsymbol{u})),
\end{equation}
where $\boldsymbol{v}(t)=\boldsymbol{\mathcal{H}}(t,\boldsymbol{u}(t))$ is the unique solution of  $\boldsymbol{0}=\boldsymbol{g}(t,\boldsymbol{u}(t),\boldsymbol{v}(t))$. Hence, in that case, the proof of convergence reduces to the one above for the ODE system in the canonical form.
\end{proof}
\subsubsection{Computation of the unknown weights}
For $n$ collocation points, the outputs of each network $\mathcal{N}_i\equiv \mathcal{N}_i(t_1,t_2, \dots t_n, \bw^{o}_{i}, \boldsymbol{p}_i) \in \R^{n}$, $i=1,2,\dots m$, are given by:
\begin{equation}
\begin{aligned}
    \mathcal{N}_i = \boldsymbol{R}_i \bw^{o}_{i}, \quad \quad  \boldsymbol{R}_i\equiv \boldsymbol{R}_{i}(t_1,\ldots,t_n,\boldsymbol{p}_i)=
   \begin{bmatrix}
       g_{1i}(t_1) & \cdots & g_{Ni}(t_1)\\
       \vdots        & \vdots  & \vdots \\
       g_{1i}(t_n) & \cdots & g_{Ni}(t_n)
    \end{bmatrix}.
\end{aligned}
\label{eq:rpnn}
\end{equation}
%
The minimization of the loss function \eqref{costfun} is performed over the 
$nm$ nonlinear residuals $F_q$:
\begin{equation}
    F_q(\boldsymbol{W}^o)= \sum_{j=1}^m M_{ij}\dfrac{d\Psi_{Nj}}{dt_l} (t_l,\bw^{o}_j) - f_i(t_l, \Psi_{N1}(t_l,\bw^{o}_1), \ldots, \Psi_{Nm}(t_l,\bw^{o}_m)),
\label{eq:Fq}
\end{equation}
where $q=l + (i-1)n$, $i=1,2,\dots m$, $l=1,2,\dots n$, $\boldsymbol{W}^0 \in \R^{m N}$ is the column vector obtained by collecting the values of all $m$ vectors $\boldsymbol{w}^{o}_i \in \mathbb{R}^N$, $\boldsymbol{W}^o = [W^o_{k}] = [\boldsymbol{w}^{o}_1,\boldsymbol{w}^{o}_2
\dots,\boldsymbol{w}^{o}_m]^T,$ $k=1,2,...mN$.
%
Thus, the solution to the above non-linear least squares problem can be obtained, e.g. with Newton-type iterations such as Newton-Raphson, Quasi-Newton and Gauss-Newton methods (see e.g. \cite{de1981continuation}). For example, by setting $\boldsymbol{F}(\boldsymbol{W}^o) = [F_1(\boldsymbol{W}^o) \cdots F_q(\boldsymbol{W}^o) \cdots F_{(nm)}(\boldsymbol{W}^o)]^T$, the new update $d\boldsymbol{W}^{o(\nu)}$ at the $(\nu)$-th Gauss-Newton iteration is computed by the solution of the linearized system:
\begin{equation}
\begin{aligned}
 (\nabla_{\boldsymbol{W}^{o(\nu)}}\boldsymbol{F})^T \nabla_{\boldsymbol{W}^{o(\nu)}}\boldsymbol{F} \quad d\boldsymbol{W}^{o(\nu)}=-  (\nabla_{\boldsymbol{W}^{o(\nu)}}\boldsymbol{F})^T\boldsymbol{F}(\boldsymbol{W}^{o(\nu)}),
\end{aligned}
\label{gauss-newton}
\end{equation}
where $\nabla_{\boldsymbol{W}^{o(\nu)}}\boldsymbol{F} \in \R^{n m \times m N}$ is the Jacobian matrix of $\boldsymbol{F}$ with respect to $\boldsymbol{W}^{o (\nu)}$.
Note that the residuals depend on the derivatives $\frac{\partial \Psi_{Ni}(\cdot)}{\partial t_l}$ and the approximation functions $\Psi_{Ni}(\cdot)$, while the elements of the Jacobian matrix depend on the derivatives of $\frac{\partial \Psi_{Ni}(\cdot)}{\partial w^{o}_{ji}}$ as well as on the mixed derivatives $\frac{\partial^2 \Psi_{Ni}(\cdot)}{\partial t_l \partial w^{o}_{ji}}$. Based on \eqref{eq:dirGaussian_RBF}, the latter are given by
\begin{equation}
     \dfrac{\partial^2 \Psi_{Ni}}{\partial t_l \partial w^{o}_{ji}}=\dfrac{\partial \mathcal{N}_i(t_l,\bw^{o}_i, \boldsymbol{p}_i)}{\partial w^{o}_{ji}}-2(t_l-t_0) \alpha_{ji} (t_l+b_{ji}-c_{j}) e^{\left( -\alpha_{ji}(t_l+b_{ji}-c_{j})^2 \right)},
    \label{mixedGaussian_RBF}
\end{equation}  
Based on the above, the elements of the Jacobian matrix $\nabla_{\boldsymbol{W}^{o(\nu)}}\boldsymbol{F}$ can be computed analytically as:
\begin{equation}
    \dfrac{\partial F_p}{\partial W^o_q}=\dfrac{\sum_{j=1}^m M_{ij}\partial^2 \Psi_{Ni}(\cdot)}{\partial t_l\partial w^{o}_{jk}}-\dfrac{\partial f_i(t_l)}{\partial w^{o}_{jk}}
    \label{eq:jac_J}
\end{equation}
where, as before, $q=l+(i-1)n$ and $p=j+(k-1)h$.

However, in general, even when $N\ge n$, the Jacobian matrix is expected to be rank deficient, or nearly rank deficient, since some of the rows due to the random construction of the basis functions can be nearly linear dependent. Thus, the solution of the corresponding system, and depending on the size of the problem, can be solved using for example truncated SVD decomposition or QR factorization with regularization.
The truncated SVD decomposition scheme results to the Moore-Penrose pseudoinverse and the updates $\boldsymbol{dW}^{o(\nu)}$ are given by:
\begin{equation*}
    \boldsymbol{dW}^{o(\nu)} = -(\nabla_{\boldsymbol{W}^{o(\nu)}}\boldsymbol{F})^{\dagger} \boldsymbol{F}(\boldsymbol{W}^{o(\nu)}),  (\nabla_{\boldsymbol{W}^{o(\nu)}}\boldsymbol{F})^{\dagger} = \boldsymbol{V}_{\epsilon} \boldsymbol{\Sigma}_{\epsilon}^{\dagger} \boldsymbol{U}_{\epsilon}^T,
\end{equation*}
where  $\boldsymbol{\Sigma}_{\epsilon}^{\dagger}$ is the inverse of the diagonal matrix with singular values of $\nabla_{\boldsymbol{W}^o}\boldsymbol{F}$ above a certain value $\epsilon$, and $\boldsymbol{U}_{\epsilon}$, $\boldsymbol{V}_{\epsilon}$ are the matrices with columns the corresponding left and right eigenvectors, respectively. If we have already factorized $\nabla_{\boldsymbol{W}^{o(\tilde{\nu})}}\boldsymbol{F}$ at an iteration $\tilde{\nu}<\nu$, then in order to decrease the computational cost, one can proceed with a Quasi-Newton scheme, thus using the same pseudoinverse of the Jacobian for the next iterations until convergence, e.g., computing the SVD decomposition for only the first and second Newton-iterations and keep the same $(\nabla_{\boldsymbol{W}^{o(1)}}\boldsymbol{F})^{\dagger}$ for the next iterations.

For large-scale sparse Jacobian matrices, as those arising for example from the discretization of PDEs, one can solve the regularization problem using other methods such as sparse QR factorization.
Here, to account for the ill-posed Jacobian, we have used a sparse QR factorization with regularization as implemented by SuiteSparseQR, a multifrontal multithreaded sparse QR factorization package \cite{davis2009user,davis2011algorithm}. 
\subsubsection{Parsimonious construction of the PIRPNN}
\paragraph{\textbf{The variable step
size scheme.}} \label{Sec:adapt}

In order to deal with the presence of sharp gradients that resemble singularities at the time interval of interest, and stiffness, we propose an adaptive scheme for adjusting the step size of time integration as follows.
The full time interval of integration $[t_0 \quad t_{end}]$ is divided into sub-intervals, i.e., $[t_0 \quad t_{end}] = [t_0 \quad t_1] \cup [t_1 \quad t_2] \cup \dots,\cup[t_k \quad t_{k+1}]\cup \dots \cup [t_{end-1} \quad t_{end}]$, where $
t_1,t_2,\dots,t_k,\dots,t_{end-1}$ are determined in an adaptive way. This decomposition of the interval leads to the solution of consecutive IVPs.
In order to describe the variable step
size scheme, let us assume that we have solved the problem up to interval $[t_{k-1} \quad t_{k}]$, hence we have found $u_i^{(k-1)}$ and we are seeking $u_i^{(k)}$ in the current interval $[t_k \quad t_{k+1}]$ with a width of $\Delta t_k = t_{k+1}-t_k$.
Moreover suppose that the Quasi-Newton iterations after a certain number of iterations, say, $\nu\le \nu_{max}$ (here $\nu_{max}=5$) the resulting approximation error is \cite{gear1971numerical,soderlind2002automatic}:
\begin{equation}
    err=\biggl\|\dfrac{\bm{F}(\bm{W}^{o(\nu)})}{AbsTol+RelTol \cdot \frac{d \bm{\Psi}^{(k)}}{dt}}\biggr\|_{l^2},
    \label{eq:err_adapt_rpnn}
\end{equation}
where $RelTol$ is the tolerance relative to the size of each derivative component $\frac{d \Psi_{Ni}^{(k)}}{dt}$ and $AbsTol$ is a threshold tolerance.
Now, if $err<1$ the solution is accepted, otherwise the solution is rejected. 

In both cases, the size of the time interval will be updated according to the elementary local error control algorithm \cite{soderlind2002automatic}:
\begin{equation}
    \begin{aligned}
    &\Delta t_k^*=0.8 \gamma \cdot \Delta t_k, \qquad \text{with} \qquad \gamma=\biggl(\dfrac{1}{err}\biggr)^{\frac{1}{\nu+1}},
    \end{aligned}
    \label{eq:adapt_step_size}
\end{equation}
where $\gamma$ is a scaling factor and $0.8$ is a safe/conservative factor. Also $\Delta t_k^*$ should not be allowed to increase or decrease too much, so $\gamma$ should not be higher than a $\gamma_{max}$ (here set to $4$) and to be smaller than a $\gamma_{min}$ (here set to $0.1$).\\
Thus, if the Quasi-Newton scheme does not converge to a specific tolerance within a number of iterations $\nu_{max}$, then the interval width is decreased, thus redefining a new guess $t^*_{k+1}=t_{k}+\Delta t_k^*$ for $t_{k+1}$ and the Quasi-Newton scheme is repeated in the interval $[t_k \quad t^*_{k+1}]$. 

Finally, the choice of the first subinterval $[t_0 \quad t_1]$ was estimated using an automatic detection code for selecting the starting step as described in  \cite{gladwell1987automatic,hairer2000solving}.

Regarding the interplay between the number of collocation points $n$ and the time interval of integration as shaped by the variable step
size scheme, we note that for band-limited profile solutions, according to the Nyquist sampling theorem, 
we can reconstruct the solution if the sampling rate is at least $2\nu_c$ samples per second; for  $n$ sampling points in a time interval say $\Delta T$, the maximum allowed frequency for band-limited signals should not exceed $\nu_c= \dfrac{n}{2\Delta T}$. For any practical purposes, a sampling rate of at least 4 or even 10 times the critical frequency  is required in order to deal with phenomena such as aliasing and response spectrum shocks.
For time-limited signals, thus, assuming that the energy of the signal in the time domain is bounded i.e., $||f_{\infty}||{_{L^1}}<C_f$, then for all practical purposes, the critical frequency $\nu_c$ can be set as the frequency beyond which the amplitude of the Fourier transform can be considered negligible, i.e., lower than a certain threshold, say $\epsilon_{\nu} \ll 1$.
\\
\paragraph{\textbf{A continuation method for Newton's iterations.}}

For Newton-type schemes, the speed of the convergence to (or the divergence from) the solution depends on the choice of the initial guess, here for the unknown weights. Thus, we address a numerical natural continuation method for providing ``good'' initial guesses for the weights of the PIRPNN.
Suppose that we have already converged to the solution in the interval $[t_{k-1} \quad t_{k}]$; we want to provide for the next time interval $[t_{k} \quad t_{k+1}]$, as computed from the proposed adaptation scheme described above, a good initial guess for the weights of the PIRPNN. We state the following proposition.
\begin{prop}
Let $\boldsymbol{\Psi}(t_k)\in \mathbb{R}^m$ be the solution found with PIRPNN at the end of the time interval $[t_{k-1} \quad t_{k}]$. Then, an initial guess for the weights of the PIRPNN for the time interval $[t_{k} \quad t_{k+1}]$ is given by:
\begin{equation}
  \hat{\boldsymbol{W}}^o=\dfrac{d\boldsymbol{\Psi}(t_k)}{dt}\dfrac{\boldsymbol{\Phi}^T}{||\boldsymbol{\Phi}||^2_{l_2}},
  \label{eq:initial_guess_continuation}
\end{equation}
where $\hat{\boldsymbol{W}}^o\in \mathbb{R}^{m\times N}$ is the matrix with the initial guess of the output weights of the $m$ PIRPNNs and $\boldsymbol{\Phi}\in \mathbb{R}^N$ is the vector containing the values of the random basis functions in the interval $[t_{k} \quad t_{k+1}]$.
\end{prop}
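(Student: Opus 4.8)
The plan is to derive~\eqref{eq:initial_guess_continuation} by requiring that, on the new sub-interval $[t_k \quad t_{k+1}]$, the PIRPNN ansatz match not only the value but also the \emph{slope} of the solution already obtained at the junction point $t_k$; this is precisely what makes the continuation ``natural'' and keeps consecutive local solutions $C^1$-consistent. First I would rewrite the ansatz~\eqref{eq:trsolsys} for the current sub-interval, using $t_k$ as the new initial time and the converged value $\boldsymbol{\Psi}(t_k)$ as the new (consistent) initial condition, so that for each network $i=1,\dots,m$
\[
\Psi_{Ni}(t)=\Psi_{Ni}(t_k)+(t-t_k)\,(\bw^{o}_i)^T\boldsymbol{\Phi}_{Ni}(t),\qquad t\in[t_k \quad t_{k+1}].
\]
By construction the value at $t_k$ is reproduced for \emph{any} weights, so the only remaining freedom is to be fixed through the derivative.

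Next I would differentiate this expression and evaluate it at the left endpoint. Applying the product rule and noting that the term carrying the factor $(t-t_k)$ vanishes at $t=t_k$, I obtain
\[
\dfrac{d\Psi_{Ni}(t_k)}{dt}=\boldsymbol{\Phi}_{Ni}(t_k)^T\,\bw^{o}_i,
\]
a single scalar \emph{linear} constraint on the $N$ unknown weights of the $i$-th network. The left-hand side is already known: it is the derivative of the previous interval's converged solution at $t_k$ (computable from~\eqref{eq:dirGaussian_RBF}, and equal to $f_i(t_k,\boldsymbol{\Psi}(t_k))$ in the canonical case of~\eqref{eq:DAE}), so no extra solve is required.

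Since this is one equation in $N\gg1$ unknowns, the system is heavily underdetermined and I would select the ``good'' guess as the minimum-Euclidean-norm solution, i.e.\ the Moore--Penrose solution. For a row vector $\boldsymbol{\Phi}^T\in\R^{1\times N}$ the pseudoinverse is $(\boldsymbol{\Phi}^T)^{\dagger}=\boldsymbol{\Phi}/\lVert\boldsymbol{\Phi}\rVert_{l_2}^2$, which gives
\[
\hat{\bw}^{o}_i=\dfrac{d\Psi_{Ni}(t_k)}{dt}\,\dfrac{\boldsymbol{\Phi}_{Ni}(t_k)}{\lVert\boldsymbol{\Phi}_{Ni}(t_k)\rVert_{l_2}^2}.
\]
Collecting the $m$ transposed row solutions $(\hat{\bw}^{o}_i)^T$ into the rows of a single matrix reproduces exactly the outer-product formula~\eqref{eq:initial_guess_continuation}, since $\tfrac{d\boldsymbol{\Psi}(t_k)}{dt}\in\R^{m}$ is a column and $\boldsymbol{\Phi}^T$ the (common) row of basis-function values evaluated at $t_k$.

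I expect the main obstacle to be conceptual rather than computational. The slope-matching system has infinitely many solutions, so the real content is justifying that the minimum-norm one is a \emph{good initializer} for the Newton/Quasi-Newton iterations and not merely a consistent choice: the argument is that it avoids spurious large weights that would inflate the initial residual~\eqref{eq:Fq}, while enforcing the correct initial slope makes the first-order Taylor behaviour of $\boldsymbol{\Psi}$ on $[t_k \quad t_{k+1}]$ agree with that of the true solution, so that the guess lies inside Newton's basin of attraction. A minor technical point is non-degeneracy of the denominator, i.e.\ $\lVert\boldsymbol{\Phi}\rVert_{l_2}\neq 0$; this holds because the first Gaussian kernel is re-centred at $t_k$ and therefore evaluates to one there, guaranteeing $\lVert\boldsymbol{\Phi}\rVert_{l_2}\ge 1$.
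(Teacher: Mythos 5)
Your proposal is correct and lands on exactly the same formula, but the matching condition you impose is derived differently from the paper's. The paper writes the first-order Taylor (forward-Euler) prediction $\hat{\boldsymbol{\Psi}}(t_{k+1})=\boldsymbol{\Psi}(t_k)+\frac{d\boldsymbol{\Psi}(t_k)}{dt}(t_{k+1}-t_k)$, equates it with the PIRPNN ansatz evaluated at the \emph{right} endpoint $t_{k+1}$, cancels the common factor $(t_{k+1}-t_k)$ to get $\hat{\boldsymbol{W}}^o\boldsymbol{\Phi}=\frac{d\boldsymbol{\Psi}(t_k)}{dt}$ with $\boldsymbol{\Phi}$ evaluated at $t_{k+1}$, and then inverts via the economy SVD of the vector $\boldsymbol{\Phi}$ (which is just the rank-one pseudo-inverse $\boldsymbol{\Phi}^{\dagger}=\boldsymbol{\Phi}^T/\lVert\boldsymbol{\Phi}\rVert_{l_2}^2$ you also use). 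You instead differentiate the ansatz and impose slope matching at the \emph{left} endpoint $t_k$, where the $(t-t_k)$ term drops out, arriving at the same underdetermined system $\boldsymbol{\Phi}(t_k)^T\hat{\bw}^o_i=\frac{d\Psi_{Ni}(t_k)}{dt}$ and the same minimum-norm resolution. The two conditions agree to first order and the proposition's statement is agnostic about where $\boldsymbol{\Phi}$ is sampled, so both are legitimate; your $C^1$-continuity reading arguably explains better why the continuation is ``natural,'' and your added remarks (minimum-norm avoids inflating the initial residual; the denominator is nonzero) address points the paper leaves implicit. One small correction: non-degeneracy of $\lVert\boldsymbol{\Phi}\rVert_{l_2}$ does not need the first kernel to be centred at $t_k$ — every component $e^{-\alpha_{ji}(t-c_j)^2}$ is strictly positive, so $\boldsymbol{\Phi}$ can never vanish.
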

\begin{proof}
At time $t_{k}$, a first-order estimation of the solution $\boldsymbol{\Psi}(t_{k+1})\in \mathbb{R}^m$ is given by:
\begin{equation}
   \hat{\boldsymbol{\Psi}}(t_{k+1}) =\boldsymbol{\Psi}(t_{k})+\dfrac{d\boldsymbol{\Psi}(t_k)}{dt}(t_{k+1}-t_{k}),
   \label{approx}
\end{equation}
where $\frac{d\boldsymbol{\Psi}(t_k)}{dt}$ is known. 
For the next time interval $[t_{k} \quad t_{k+1}]$, the approximation of the solution with the PIRPNNs reads:
\begin{equation}
   {\boldsymbol{\Psi}}(t_{k+1}) =\boldsymbol{\Psi}(t_{k})+(t_{k+1}-t_{k})\boldsymbol{W}^o\boldsymbol{\Phi}.
    \label{approxRPNN}
\end{equation}
By Eqs.(\ref{approx}), (\ref{approxRPNN}), we get:
\begin{equation}
  \hat{\boldsymbol{W}}^o\boldsymbol{\Phi}=\dfrac{d\boldsymbol{\Psi}(t_k)}{dt}.
 \label{approxRPNN2}
\end{equation}
It can be easily seen, that the economy SVD decomposition of the $N$-dimensional vector $\boldsymbol{\Phi}$ is given by:
\begin{equation}
\boldsymbol{\Phi}_{N\times 1}=\boldsymbol{U}_{N\times 1}\sigma_1,\quad \boldsymbol{U}_{N\times 1}=\dfrac{\boldsymbol{\Phi}_{N\times 1}}{||\boldsymbol{\Phi}||_{l_2}}, \sigma_1=||\boldsymbol{\Phi}||_{l_2}.
\end{equation}
Thus, the pseudo-inverse of $\boldsymbol{\Phi}$, is $\boldsymbol{\Phi}^{\dagger}=\dfrac{\boldsymbol{\Phi}^T}{||\boldsymbol{\Phi}||^2_{l_2}}$.
Hence, by Eq.(\ref{approxRPNN2}), an initial guess for the weights for the time interval $[t_{k} \quad t_{k+1}]$ is given by:
\begin{equation}
  \hat{\boldsymbol{W}}^o=\dfrac{d\boldsymbol{\Psi}(t_k)}{dt}\boldsymbol{\Phi}^{\dagger}=\dfrac{d\boldsymbol{\Psi}(t_k)}{dt}\dfrac{\boldsymbol{\Phi}^T}{||\boldsymbol{\Phi}||^2_{l_2}}.
    \label{approxRPNN3}
\end{equation}
\end{proof}

\paragraph{\textbf{Estimation of the interval of uniform distribution based on the variance/bias trade-off decomposition}}
Based on the choice of Gaussian basis functions, from Eqs.(\ref{eq:randomexpansion}), (\ref{eq:lin_map4}) one has to choose the number $N$ of the basis functions, and the interval of the uniform distribution say $\mathcal{U}=[0 \quad \alpha_{max}]$, from which the values of the shape parameters $\alpha_i$ are drawn.
The theorems of uniform convergence (sections \ref{sec:rand_proj_rbfnn} and \ref{sec:convergence}) consider the problem from the function approximation point of view. Regarding the approximation of a discrete set of data points, it has been proved 
that a set of $N$ randomly and independently constructed vectors in the hypercube $[0, 1]^n$ will be pair-wise $\epsilon$-orthogonal (i.e., $|\boldsymbol{\phi}_{i}^T \boldsymbol{\phi}_i|<\epsilon, \forall i,j, i\neq j$) with probability $1-\theta$, where $\theta$ is sufficiently small for  $N<\exp{(\epsilon^2n/4)}\sqrt{\theta}$ \cite{gorban2016approximation}.

Here, we construct $N$ random vectors by parsimoniously sampling the values of the shape parameter from an appropriately bounded uniform interval for minimizing the two sources of error approximation, i.e., the bias and the variance in order to get good generalization properties. In our scheme, these, over all possible values of the shape parameter $\alpha$ are given by (see Eq.\eqref{approxsolintegral}):
\begin{equation}
\begin{aligned}
    &\mathcal{B}=\mathbb{E}\biggl[\sum_{j=1}^{N}w_{j}^{o}e^{ -\alpha_{j}( t-t_{j})^2}\biggr]-I_n(t),\\ &Var=\mathbb{E}\biggl[(\sum_{j=1}^{N}w_{j}^{o}e^{ -\alpha_{j}( t-t_{j})^2})^2\biggr]-\mathbb{E}\biggl[\sum_{j=1}^{N}w_{j}^{o}e^{ -\alpha_{j}( t-t_{j})^2}\biggr]^2,
    \end{aligned}
\end{equation}
where $\mathbb{E}$ denotes expectation operator. Overfitting, i.e., a high variance occurs for large values of $\alpha$ and underfitting, i.e., a high bias occurs for small values of $\alpha$. \\
The expected value of the kernel $\phi(t-t_j;\alpha)=e^{-a(t-t_j)^2}, t\neq t_j$ with respect to the probability density function of the uniform distribution of the random variable $\alpha$ reads:
\begin{equation}
\begin{aligned}
    \mathbb{E}[\phi(t-t_j;\alpha)]=\int\displaylimits_{0}^{\alpha_{max}} f_{\alpha}(\alpha) e^{-\alpha(t-t_j)^2} d\alpha
   =\dfrac{1-e^{-\alpha_{max}(t-t_j)^2}}{\alpha_{max}(t-t_j)^2}.
    \end{aligned}
    \label{expectval}
\end{equation}
Similarly, the variance is given by:
\begin{equation}
\begin{aligned}
 Var[\phi(t-t_j;\alpha)]=\int\displaylimits_{e^{-\alpha_{max}(t-t_j)^2}}^{1}\phi^2 \dfrac{1}{\alpha_{max}(t-t_j)^2}\dfrac{1}{\phi} d\phi-\mathbb{E}[\phi]^2=\\
 \dfrac{1-e^{-2\alpha_{max}(t-t_j)^2}}{2\alpha_{max}(t-t_j)^2}-\mathbb{E}[\phi]^2.
  \end{aligned}
  \label{expectvar}
\end{equation}
At the limits of $t-t_j=dt=\dfrac{t_{end}-t_0}{N}$, 
from Eqs.(\ref{expectval}),(\ref{expectvar}), we get:
\begin{equation}\label{expectedminmax}
\begin{aligned}
&\mathbb{E}[\phi(dt;\alpha)]=\dfrac{N^2}{(t_{end}-t_0)^2}\dfrac{1-e^{-\alpha_{max}\dfrac{(t_{end}-t_0)^2}{N^2}}}{\alpha_{max}},\\ 
&Var[\phi(dt;\alpha)]=\dfrac{N^2}{(t_{end}-t_0)^2}\dfrac{1-e^{-2\alpha_{max}\dfrac{(t_{end}-t_0)^2}{N^2}}}{2\alpha_{max}} -\mathbb{E}[\phi(dt;\alpha)]^2, \\
\end{aligned}
\end{equation}
The above expressions suggest that $\alpha_{max}=\dfrac{ N^2}{c^2(N)}\dfrac{1}{(t_{end}-t_0)^2},$ $c(N)>0$. This leaves us with only one parameter $c=c(N)$ to be determined for the ``optimal" estimation of the upper bound of the uniform interval.
Here, the value of $c(N)$ is found  based on a reference solution, say $\boldsymbol{u}_{ref}$ resulting from the integration of a stiff problem, which solution profiles contain also sharp gradients. For our computations, we have chosen as reference solution the one resulting from the van der Pol (vdP) ODEs given by:
\begin{equation}
\dfrac{du_1}{dt}  = u_2, \quad \dfrac{du_2}{dt} = \mu(1-{u_1}^2)u_2-u_1,
\label{vdpsys0}
\end{equation}
for $\mu=100$ and $u_1(0)=2$, $u_2(0)=0$ as initial conditions; the time interval was set to $[0 \quad 3\mu]$, i.e., approximately three times the period of the relaxation oscillations, which for $\mu\gg 1$, is $T\approx \mu (3-2\ln{2})$. The particular choice of $\mu=100$ results to a stiff problem, thus containing very sharp gradients resembling approximately a discontinuity in the solution profile within the integration interval.
The reference solution was obtained using  the \texttt{ode15s} of the MATLAB ODE suite with absolute and relative error tolerances equal to 1e$-$14.\\
Here, in order to estimate the ``optimal" values of $c, N$ for the vdP problem, we computed the bias-variance loss function  using $60,000$ points in $[0 \quad 3\mu]$ fixing the number of collocation points to $n=20$.
In Figure \ref{fig:optimization_vdp}, we show the contour plots of the bias and variance errors and the computational times required for convergence, with both absolute tolerance and relative tolerance set to 1e$-$06, for the vdP model.
\begin{figure}
    \centering
    \subfigure[]{\includegraphics[width=0.45 \textwidth]{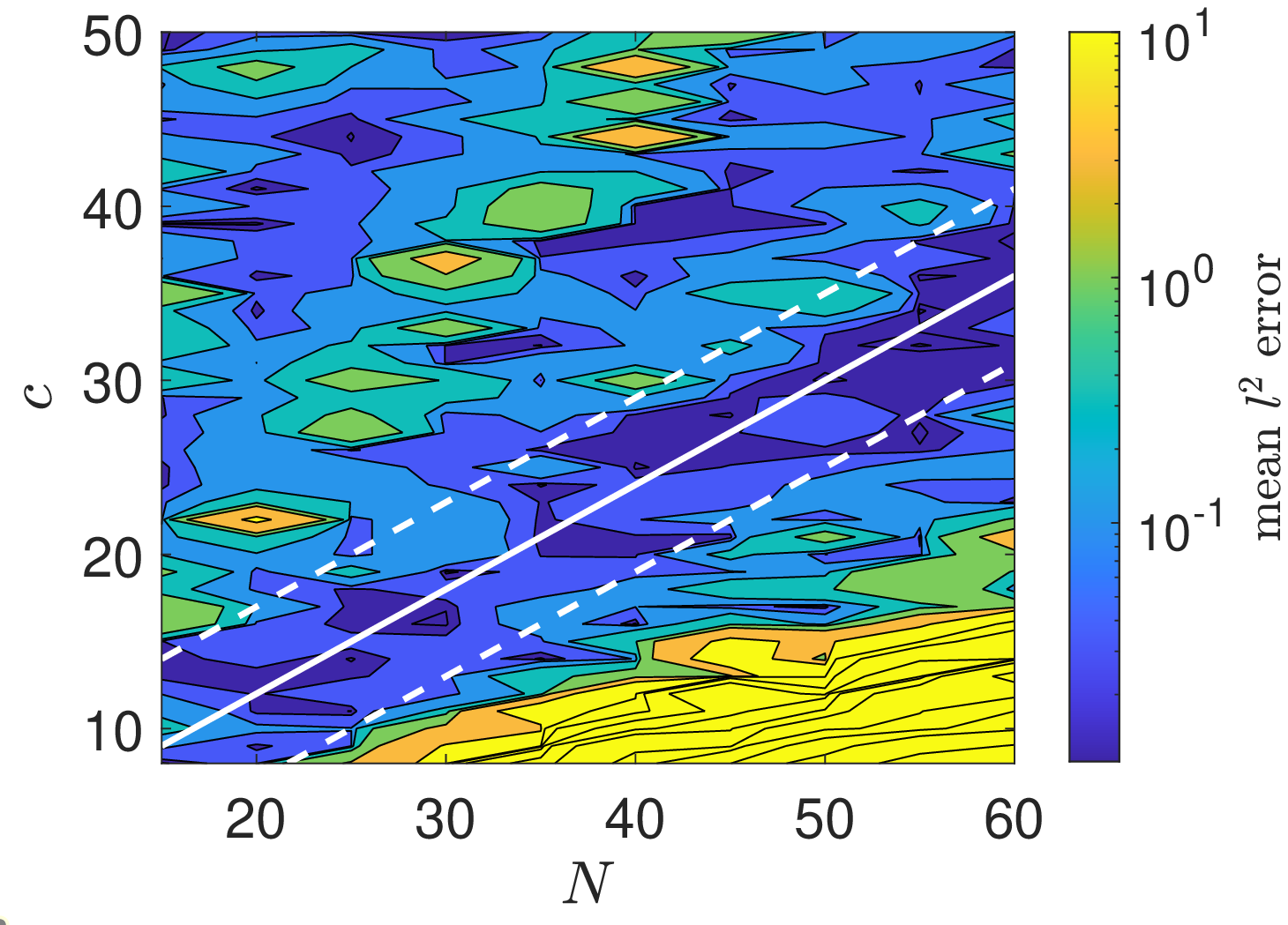}}
    \subfigure[]{\includegraphics[width=0.45 \textwidth]{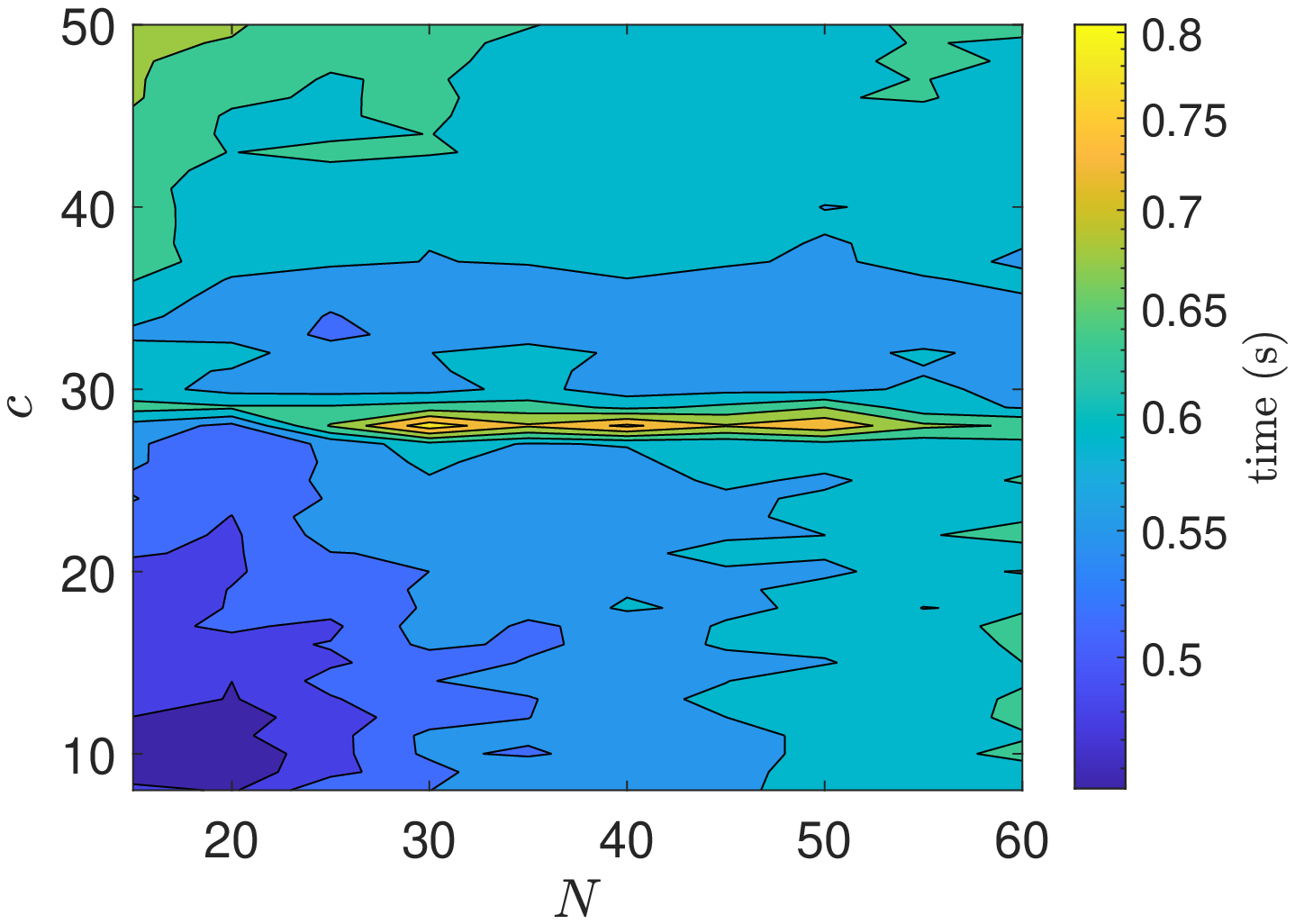}}
    
    \caption{The numerical solution of the van der Pol problem with the proposed PIRPNN with $\mu=100$ in the interval $[0 \quad 3\mu]$ with respect to $c$ and $N$ for $n=20$; $RelTol$ and $AbsTol$ were set to 1e$-$06; (a) Sum of the Bias and Variance of the approximation with respect to the reference solution as obtained with \texttt{ode15s} with $AbsTol$ and $RelTol$ set at 1e$-$14, (b) Computational Times (s).
    }
    \label{fig:optimization_vdp}
\end{figure}
As can be seen in Figure \ref{fig:optimization_vdp}(a) there is a linear band of ``optimal" combinations of $c,N$ within which the bias-variance error is for any practical purposes the same of the order of $\sim$ 1e$-$2. In particular, this band can be coarsely described by a linear relation of the form $c \approx \frac{3}{5}N,$ $N \ge 20$. Furthermore, as shown in Figure\ref{fig:optimization_vdp}(b), within this band of ``optimal'' combinations, the computational cost is minimum for $c\sim 12,$ $N=n=20$. 
Based on the above, the parsimonious of the variables $N$ and $c$ are $N=20,$ $c=12$ giving the best numerical accuracy and minimum computational cost. We note that the above parsimonious optimal values are set fixed once and for all for all the benchmark problems that we consider here.
\begin{algorithm}[htp!]
\small
\caption{Solving IVPs of ODEs and index-1 DAEs using PIRPNNs\label{alg:rpnn}}
\smallskip
\begin{algorithmic}[1] 
\Require $\displaystyle \boldsymbol{M}\dfrac{d\boldsymbol{u}}{dx}=\boldsymbol{f}(t,\boldsymbol{u})$ in $[0 \quad  t_{end}]$\; \Comment{THe ODE/ DAE system}
\Require $u_i(0)=z_i,$ for $i=1,2,\ldots,m$  \Comment{Set/Find Consistent Initial conditions}
\Require Abstol, RelTol, FlagSparse\;
\State $N \gets 20; \, n \gets 20$; \Comment{Set \# Neurons and \# Collocation Points}
\State $Der_i=0;$ \Comment{Initialization of Derivatives}
\State $\nu_{max}\gets 5$; \Comment{Set max \# iters for Newton's iterations}
\State $t^{*} \gets 0$; \, $\Delta t \gets set\_step\_size(\bm{f},\bm{z},AbsTol,RelTol)$; \Comment{set initial interval, see \cite{gladwell1987automatic,hairer2000solving}}
\Repeat
\State Select $t_l \in [t^{*} \quad t^{*}+\Delta t]; \, l=1,\ldots,n$ \Comment{Set \# Collocation Points}
\State $c_{j} \gets t^{*} + j \dfrac{\Delta t}{N-1}; \, j=1,\dots,N$\Comment{Set the Centers of Gaussian kernels}
\State $\displaystyle \alpha_{ji} \sim \mathcal{U} \left( 0,\dfrac{25}{9\Delta t^2} \right);$ \Comment{Uniformly Distributed Shape Parameters of Gaussian kernels}
\State $\bm{w}_i^o=Der_i \cdot \frac{\Phi^T}{||\Phi||^2_{l_2}};$ \Comment{Natural Continuation for an Initial Guess (see Eq.\eqref{eq:initial_guess_continuation})}
\State $\mathcal{N}_i(t,\boldsymbol{w}^0_i,\bp_i) \gets \sum_{j=1}^N w^{o}_{ji}\text{exp}\left(-\alpha_{ji}(t-c_j)^2\right)$\, \Comment{Construct the RPNN}
\State $\Psi_{Ni}(t,\boldsymbol{w}^0_i)=z_i+(t-t^{*})\mathcal{N}_i(t,\boldsymbol{w}^0_i,\bp_i)$\,\Comment{(see Eqs.\eqref{eq:gaussian_RBF},\eqref{eq:rpnn} and \eqref{eq:rpnn})}
\State $\nu \gets 0$; \Comment{Counter for Newton Iterations}
\Repeat
\For{$l=1,\dots,n$ and $i=1,\dots,m$}
\State $q \gets l + (i-1)n$ \Comment{Construct Residuals (see Eq.\eqref{eq:Fq})}
\State $\displaystyle F_q(\boldsymbol{W}^o) \gets \sum_{j=1}^m M_{ij}\dfrac{d\Psi_{Nj}}{dt_l} (t_l,\bw^{o}_j) - f_i(t_l, \Psi_{N1}(t_l,\bw^{o}_1), \ldots, \Psi_{Nm}(t_l,\bw^{o}_m))$;
\EndFor
\State Set $\boldsymbol{F}(\bm{W}^o)=[F_1(\bm{W}^o) , \, F_2(\bm{W}^o) , \, \dots \, , \, F_{(nm)}(\bm{W}^o)]^T$
\If{$\nu \le 1$} \Comment{Apply Quasi-Newton scheme}
\State  $J\gets\nabla_{\bm{W}^o} \boldsymbol{F}(\bm{W}^o)$\; \Comment{Compute Jacobian Matrix}
\If{FlagSparse is `False'}
\State $J^{\dagger} \gets V_{{\epsilon}} \Sigma_{{\epsilon}}^{\dagger} U_{{\epsilon}}^T$\; \Comment{Compute Pseudo-Inverse with SVD}
\EndIf
\EndIf
\If{FlagSparse is `False'}
\State $d\bm{W}^o \gets - J^{\dagger} \boldsymbol{F}(\bm{W}^o)$; 
\Else
\State $d\bm{W}^o \gets \texttt{spqr\_solve}(J,-F,'solution','min2norm')$; \Comment{SuiteSparseQR \cite{davis2009user,davis2011algorithm}}
\EndIf
\State $\bm{W}^o \gets \bm{W}^o+d\bm{W}^o$;

\State $err \gets \biggl\|\dfrac{\bm{F}(\bm{W}^{o(\nu)})}{AbsTol+RelTol \cdot \dfrac{d \bm{\Psi}^{(k)}}{dt}}\biggr\|_{l^2}$; \Comment{Compute Approximation Error \eqref{eq:err_adapt_rpnn}}
\State $\nu \gets \nu + 1$;
\Until{{$(err < 1)$ or $(\nu \ge \nu_{max})$}}
\If{$err<1$}
\State $t^{*} \gets t^{*}+\Delta t$;
\State $Der_i=\frac{d\Psi_{N_i}(t^*)}{dt};$ \Comment{Derivative at End Point}
\EndIf
\State $\Delta t \gets 0.8 \cdot min\biggl(0.1,max\biggl(4,\dfrac{1}{err}^{\frac{1}{\nu+1}}\biggr)\biggr) \cdot \Delta t$; \, \Comment{Adapt Step Size \eqref{eq:adapt_step_size}}
\Until{{$t^{*}=t_{end}$}}
\end{algorithmic}
\end{algorithm}

\subsubsection{The algorithm}
We summarize the proposed method in the pseudo-code shown in Algorithm~\ref{alg:rpnn}, where $\mathcal{U}(a,b)$ denotes the uniform random distribution in the interval $(a,b)$ and $FlagSparse$ is a logic variable for choosing either the SVD or the sparse QR factorization.

\section{Numerical Results\label{sec:results}}
We implemented the proposed algorithm~\ref{alg:rpnn} using MATLAB 2020b on an Intel Core i7-10750H CPU @ 2.60GHz with up to 3.9 GHz frequency and a memory of 16 GBs. In all our computations, we have used a fixed number of collocation points $n=20$ and number of basis functions $N=20$ with $c=12$ as discussed above. 
The Moore-Penrose pseudoinverse of $\nabla_{\boldsymbol{w}^o}\boldsymbol{F}$ was computed with the MATLAB built-in function \texttt{pinv}, with the default tolerance.

As stated, for assessing the performance of the proposed scheme, we considered seven benchnark problems of stiff ODEs and index-1 DAEs. In particular, we considered 
four index-1 DAEs, namely, the Robertson \cite{Robertson1966,shampine1999solving} model of chemical kinetics, a problem of mechanics \cite{shampine1999solving}, a power discharge control problem \cite{shampine1999solving}, the index-1 DAE chemical Akzo Nobel problem \cite{mazzia2012test,stortelder1998parameter}, and three stiff systems of ODEs, namely, the Belousov-Zhabotinsky chemical model \cite{belousov1951periodic,zhabotinsky1964periodical}, the Allen-Chan metastable PDE phase-field model \cite{allen1979microscopic,trefethen2000spectral} and the Kuramoto-Sivashinsky PDE \cite{sivashinsky1977nonlinear,trefethen2000spectral}.
For comparison purposes, we used three solvers of the MATLAB ODE suite \cite{shampine1997matlab}, namely \texttt{ode15s} and \texttt{ode23t} for DAEs and also \texttt{ode23s} for stiff ODEs, thus using the analytical Jacobian.  In order to estimate the numerical approximation error, we used as reference solution the one computed with \texttt{ode15s} setting the absolute and relative error tolerances to 1e$-$14. To this aim, we computed the $l^2$ and $l^{\infty}$ norms of the differences between the computed and the reference solutions, as well as the mean absolute error (MAE). Finally, we ran each solver 10 times and computed the median, maximum and minimum computational times. For the PIRPNN the reported accuracy is the mean value of 10 estimates. The initial time interval was selected according to the code in Chapter II.4 in \cite{hairer2000solving} (for further details see \cite{gladwell1987automatic}).

\subsection{Case Study 1: The Robertson index-1 DAEs}
The Robertson model describes the kinetics of an autocatalytic reaction \cite{Robertson1966}. This system of three DAEs is part of the benchmark problems considered in \cite{shampine1999solving}. The set of the reactions reads:
\begin{equation}
\begin{aligned}
&A\xrightarrow{k_1}B,\\
&B+C\xrightarrow{k_2}A+C,\\
&2B\xrightarrow{k_3}B+C,
\end{aligned}
\label{eq:strrober}
\end{equation}
where $A$, $B$, $C$ are chemical species and $k_1=0.04$, $k_2=10^4$ and $k_3=3\times10^7$ are reaction rate constants. Assuming that the total mass of the system is conserved, we have the following system of index-1 DAEs:
\begin{equation}
\begin{aligned}
&\dfrac{dA}{dt}  = -k_1A+k_2BC,\\
&\dfrac{dB}{dt}  = +k_1A-k_2BC-k_3B^2,\\
&  A+B+C=1,
\end{aligned}
\label{eq:robertson}
\end{equation}
where $A$, $B$ and $C$ denote the concentrations of $[A]$, $[B]$ and $[C]$, respectively. In our simulations, we set $A(0)=1$, $B(0)=0$ as initial conditions of the concentrations and a very large time interval $[0 \quad 4\cdot 10^{11}]$ as proposed in \cite{metivier2012strategies}.
\begin{table}[ht!]
\begin{center}
\caption{The Robertson index-1 DAEs \eqref{eq:robertson} \cite{shampine1999solving} in the time interval $[0 \quad 4\cdot 10^{11}]$. $l^{2}$, $l^{\infty}$ and mean absolute (MAE) approximation errors obtained with both absolute and relative tolerances set to 1e$-$03 and 1e$-$06.
\label{tab:robertson_accuracy}}
{\footnotesize
    \begin{tabular}{|l|l |l l l |l l l|}
    \hline
\multicolumn{2}{|c|}{} & \multicolumn{3}{c|}{$tol=$ 1e$-$03} & \multicolumn{3}{c|}{$tol=$ 1e$-$06} \\
\cline{3-8}
        \multicolumn{2}{|c|}{} & $l^2$ & $l^{\infty}$ & MAE & $l^2$ & $l^{\infty}$ & MAE \\
        \hline
        \rowcolor{LightCyan}
        & PIRPNN & 2.08e$-$01 & 1.37e$-$02 & 2.47e$-$04 & 2.07e$-$04 & 6.69e$-$06 & 4.29e$-$07\\
        $A$ & \texttt{ode23t}  & Inf & Inf & Inf & 3.41e$+$00 & 2.13e$-$01 & 3.10e$-$03\\
        & \texttt{ode15s} & 3.85e$+$09 & 1.92e$+$08 & 3.86e$+$06 & 2.84e$+$09 & 1.62e$+$08 & 2.15e$+$06 \\
        \hline
        \rowcolor{LightCyan}
        & PIRPNN & 1.79e$-$06 & 1.33e$-$07 & 1.79e$-$09 & 9.86e$-$10 & 5.48e$-$11 & 2.02e$-$12\\
        $B$ & \texttt{ode23t}  &  Inf & Inf & Inf & 3.41e$+$00 & 2.13e$-$01 & 3.10e$-$03\\
        & \texttt{ode15s} & 4.57e$-$03 & 2.42e$-$04 & 4.81e$-$06 & 1.56e$-$04 & 4.00e$-$06 & 1.75e$-$07 \\
        \hline
       \rowcolor{LightCyan}
        & PIRPNN & 2.08e$-$01 & 1.37e$-$02 & 2.47e$-$04 & 2.07e$-$04 & 6.69e$-$06 & 4.29e$-$07\\
        $C$ & \texttt{ode23t}  & Inf & Inf & Inf & 1.10e$-$03 & 1.75e$-$05 & 2.92e$-$06\\
        & \texttt{ode15s} & 3.85e$+$09 & 1.92e$+$08 & 3.86e$+$06 & 2.84e$+$09 & 1.62e$+$08 & 2.15e$+$06 \\
    \hline
    \end{tabular}
}
\end{center}
\end{table}
Table~\ref{tab:robertson_accuracy} summarizes the $l^2$, $l^{\infty}$ and mean absolute (MAE) approximation errors with respect to the reference solution in $40,000$ logarithmically spaced grid points in the interval $[0 \quad 4\cdot 10^{11}]$. As shown, for the given tolerances, the proposed method outperforms \texttt{ode15s} and \texttt{ode23t} in all metrics.
\begin{figure}[ht!]
    \centering
    \subfigure[]{
    \includegraphics[width=0.3 \textwidth]{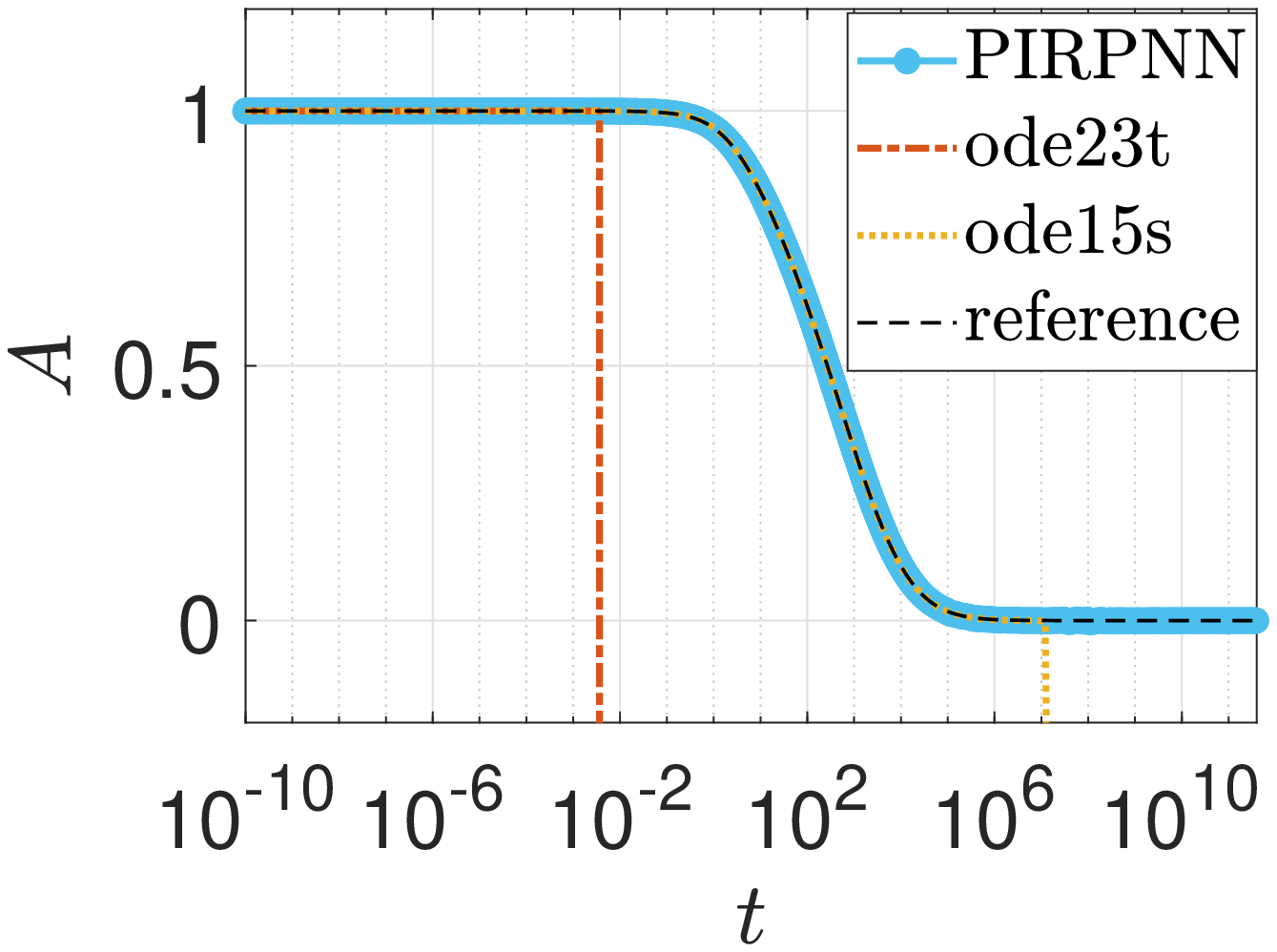}
    }
    \subfigure[]{
    \includegraphics[width=0.3 \textwidth]{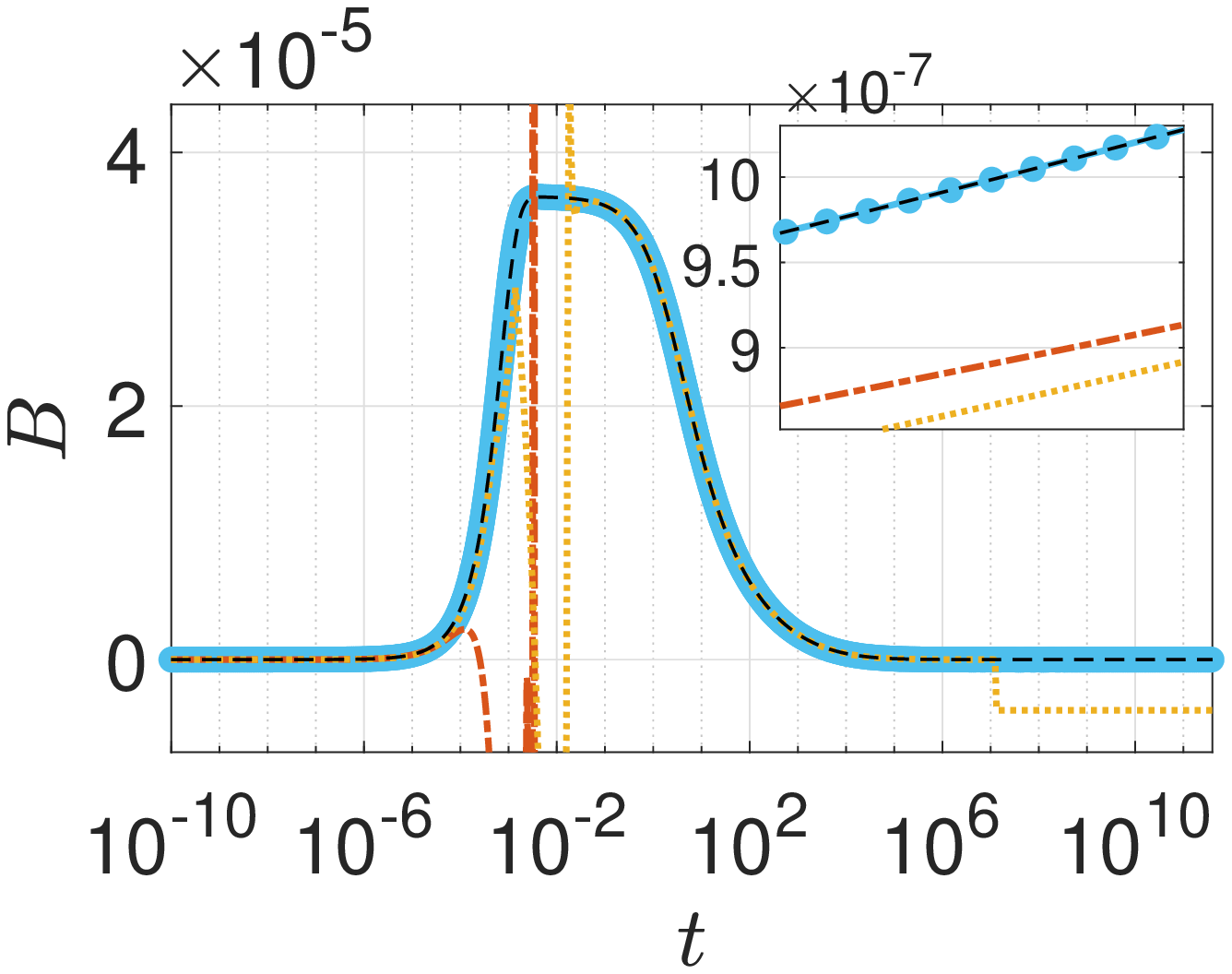}
    }
    \subfigure[]{
    \includegraphics[width=0.3 \textwidth]{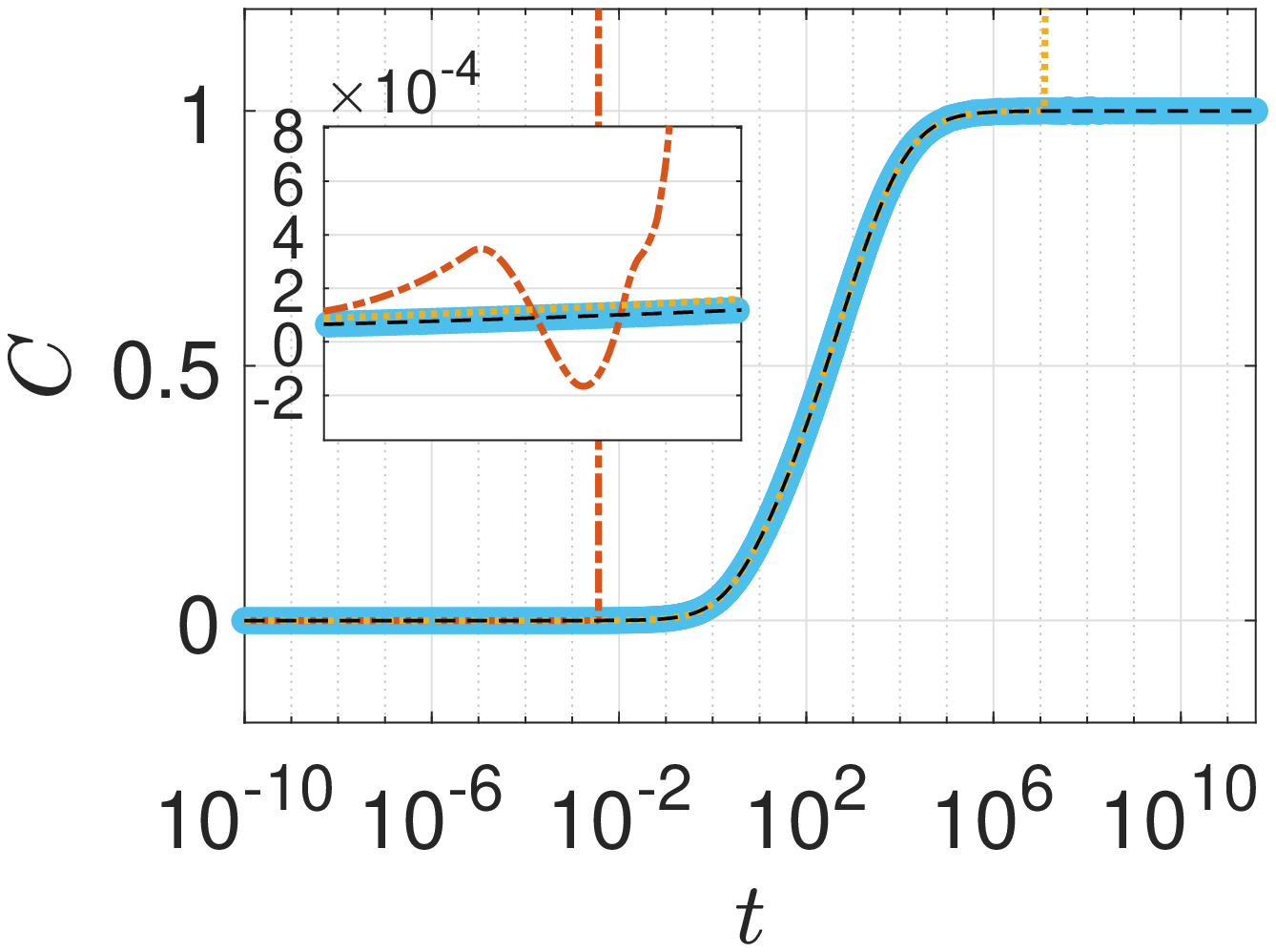}
    }
    \subfigure[]{
    \includegraphics[width=0.3 \textwidth]{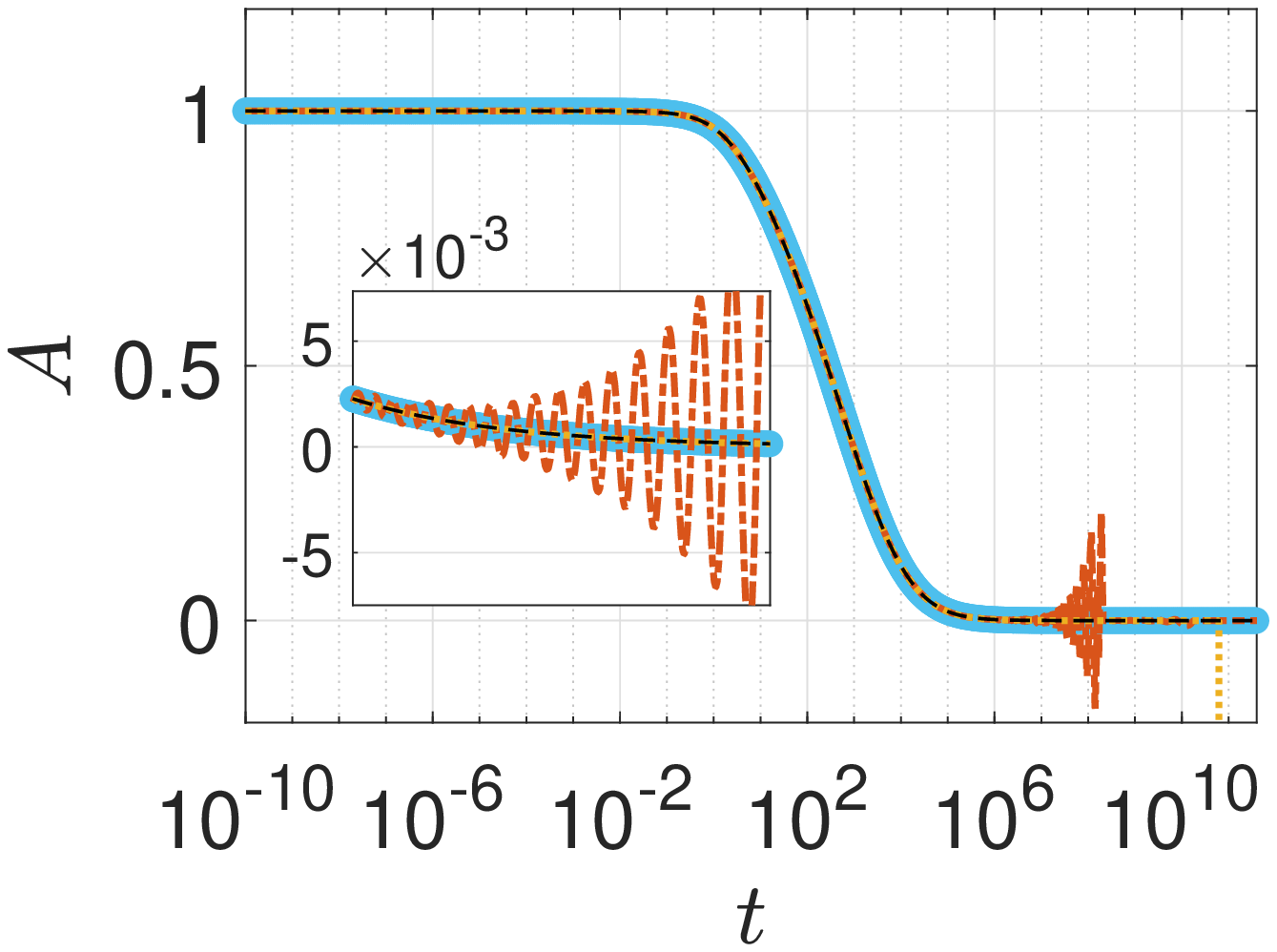}
    }
    \subfigure[]{
    \includegraphics[width=0.3 \textwidth]{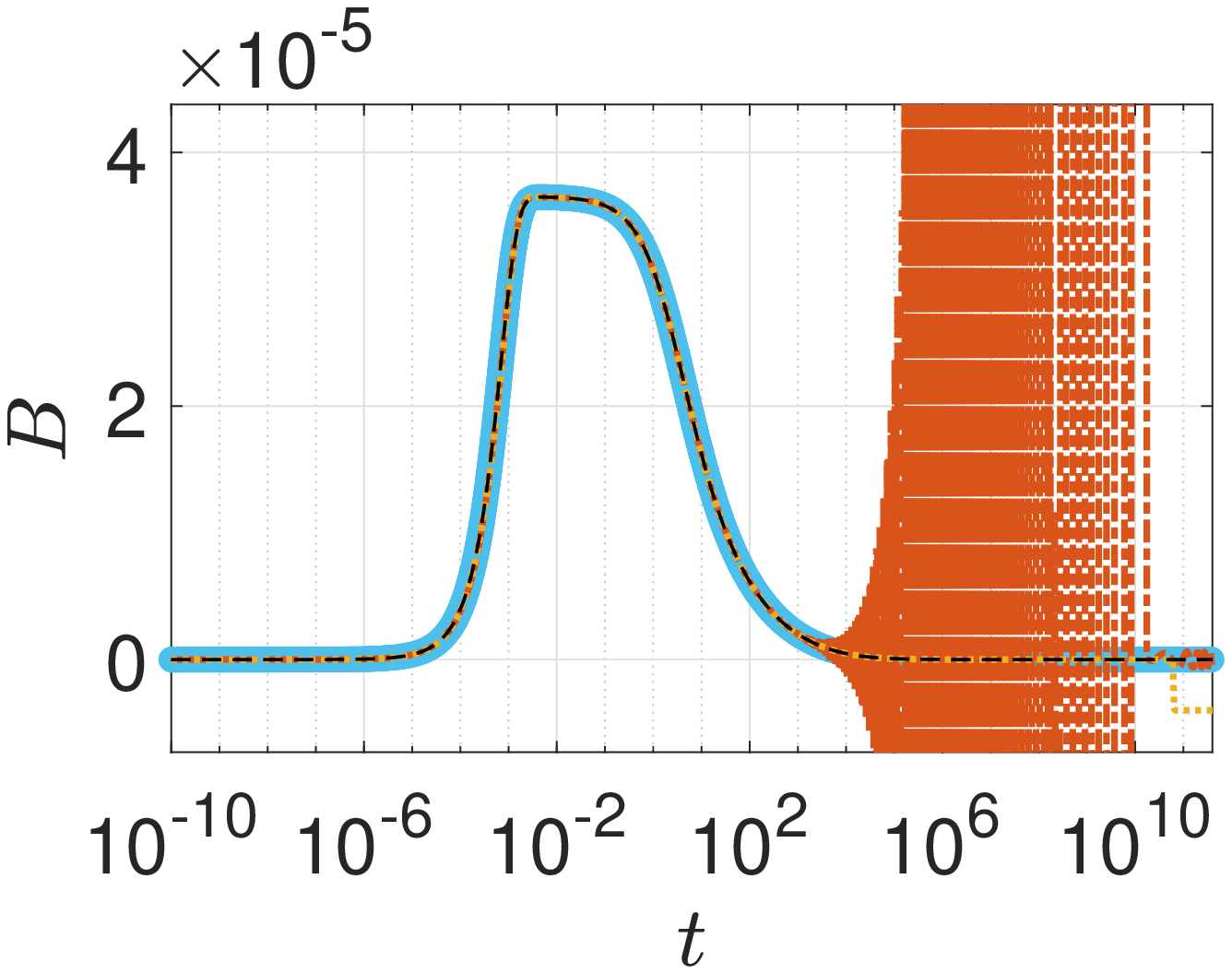}
    }
    \subfigure[]{
    \includegraphics[width=0.3 \textwidth]{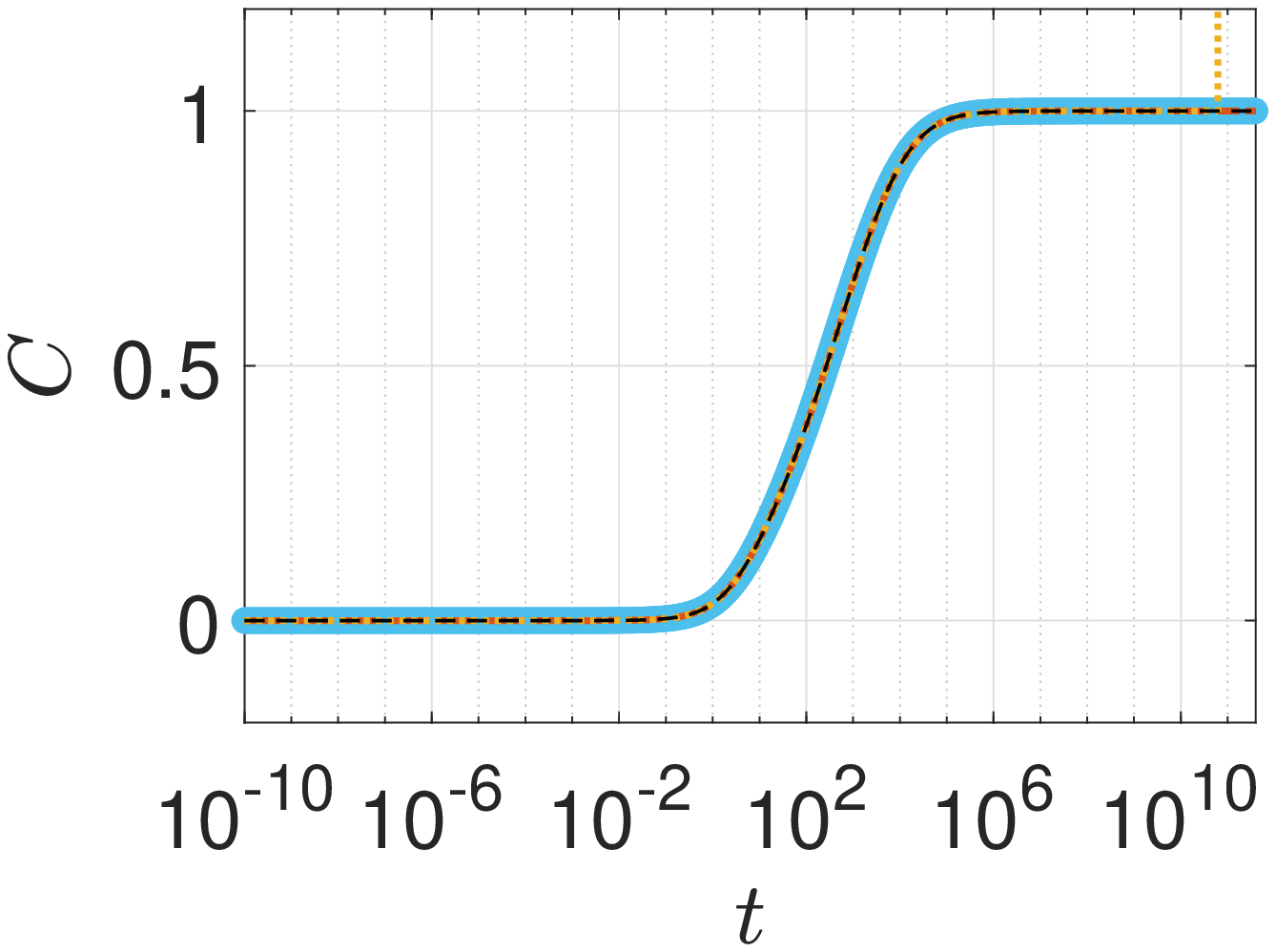}
    }
\caption{The Robertson index-1 DAEs \eqref{eq:robertson} \cite{shampine1999solving}. Approximate solutions computed in the interval $[0 \quad 4\cdot 10^{11}]$ with both absolute and relative tolerances set to~1e$-$03 ((a), (b) and (c)) and 1e$-$06 ((d), (e) and (f)). Insets depict zooms around the reference solution. \label{fig:robertson}}
\end{figure}
As it is shown in Figure~\ref{fig:robertson}, for tolerances 1e$-$03, the proposed scheme achieves more accurate solutions than \texttt{ode23t} and \texttt{ode15s}.
Actually, as shown, for the given tolerances, both \texttt{ode23t} and \texttt{ode15s} fail 
to approximate the solution up to the final time. 
\begin{table}[ht!]
\begin{center}
\caption{The Robertson index-1 DAEs \eqref{eq:robertson}. Computational times (s) (median, minimum and maximum over $10$ runs) and number of points required in the interval $[0 \quad 4\cdot 10^{11}]$ by the PIRPNN, \texttt{ode23t} and \texttt{ode15s} with both absolute and relative tolerances set to 1e$-$03 and 1e$-$06.}
{\footnotesize
\setlength{\tabcolsep}{3pt}
\begin{tabular}{|l|lllr|lllr|}
\hline
& \multicolumn{4}{c|}{$tol=$ 1e$-$03} & \multicolumn{4}{c|}{$tol=$ 1e$-$06} \\
\cline{2-9}
  & median & min & max & \multicolumn{1}{l|}{\# pts} & median & min & max & \multicolumn{1}{l|}{\# pts}\\
  \hline
  \rowcolor{LightCyan}
  PIRPNN & 2.66e$-$02 & 2.32e$-$02 & 3.97e$-$02 &  640 & 3.47e$-$02 & 3.15e$-$02 & 4.11e$-$02 & 682\\
  \texttt{ode23t}  & 7.09e$-$03 & 6.74e$-$03 & 1.16e$-$02 & 199 & 1.42e$-$02 & 1.38e$-$02 & 1.64e$-$02 & 613\\
  \texttt{ode15s} & 1.02e$-$02 & 9.69e$-$03 & 1.84e$-$02 & 271 & 2.11e$-$02 & 2.00e$-$02 & 2.18e$-$02 & 696\\
  reference & 1.15e$-$01 & 1.12e$-$01 & 1.22e$-$01 & 5194  & 1.15e$-$01 & 1.12e$-$01 & 1.22e$-$01 & 5194\\
  \hline
\end{tabular}
}
\end{center}
\label{tab:robertson_time_points}
\end{table}
In Table~\ref{tab:robertson_time_points}, we report the computational times and number of points required by each method, including the time for computing the reference solution.
As shown, the corresponding total number of points required by the proposed scheme is comparable with the ones required by \texttt{ode23t} and \texttt{ode15s} and significantly less than the number of points required by the reference solution.
Furthermore, the computational times of the proposed method are comparable with the ones required by the \texttt{ode23t} and \texttt{ode15s}, thus outperforming the ones required for computing the reference solution.

\subsection{Case Study 2: a mechanics non autonomous index-1 DAEs model}
This is a non autonomous system of five index-1 DAEs and it is part of the benchmark problems considered in \cite{shampine1999solving}. It describes the motion of a bead on a rotating needle subject to the forces of gravity, friction, and centrifugal force. The equations of motion are:
\begin{equation}
\begin{aligned}
    &\frac{du_1}{dt}=u_2, \\
    &\frac{du_2}{dt} = - 10u_2 + {sin}(t+pi/4)u_5, \\
    &\frac{du_3}{dt} =   u_4,\\ 
    &\frac{du_4}{dt} = - 10u_4 - {cos}(t+pi/4)u_5 + 1,\\
    & 0 = g_{pp} + 20g_p + 100g,
\end{aligned}
\label{eq:ex11}
\end{equation}
where 
\begin{equation}
\begin{aligned}
   &g ={cos}(t+pi/4)u_3 -{sin}(t+pi/4)u_1,\\
   &g_p    =  {cos}(t+pi/4)(u_4 - u_1) + {sin}(t+pi/4)( - u-2 - u_3),\\
   &g_{pp}   = {cos}(t+pi/4)(\frac{du_4}{dt} - \frac{du_1}{dt} - u_2 - u_3+ \text{sin}(t+pi/4)(- \frac{du_2}{dt} - \frac{du_3}{dt} - u_4 + u_1).
\end{aligned}
\label{eq:ex11b}
\end{equation}
The initial conditions are $u_1(0)=1,$ $u_2(0) = -6,$ $u_3(0) =  1,$ $u_4(0) = -6$ and a consistent initial condition for $u_5(0) = -10.60660171779820$ was found with Newton-Raphson at $t=0$, with a tolerance of 1e$-$16.
\begin{table}[ht]
\begin{center}
\caption{The mechanics problem of non autonomous index-1 DAEs \eqref{eq:ex11} in $[0\quad 15]$. $l^{2}$ $l^{\infty}$ and mean absolute (MAE) approximation errors obtained with both absolute and relative tolerances set to 1e$-$03 and 1e$-$06.\label{tab:ex11_accuracy}}
{\footnotesize
    \begin{tabular}{|l|l |l l l |l l l|}
    \hline
\multicolumn{2}{|c|}{} & \multicolumn{3}{c|}{$tol=$ 1e$-$03} & \multicolumn{3}{c|}{$tol=$ 1e$-$06} \\
\cline{3-8}
        \multicolumn{2}{|c|}{} & $l^2$ & $l^{\infty}$ & MAE & $l^2$ & $l^{\infty}$ & MAE \\
        \hline
        \rowcolor{LightCyan}
        & PIRPNN & 3.64e$-$05 & 7.77e$-$07 & 2.06e$-$07 & 9.70e$-$07 & 2.57e$-$08 & 4.71e$-$09\\
        $u_1$ & \texttt{ode23t}  & 6.26e$-$01 & 1.69e$-$02 & 3.34e$-$03 & 8.41e$-$03 & 2.19e$-$04 & 4.56e$-$05\\
        & \texttt{ode15s} & 6.38e$-$01 & 1.90e$-$02 & 3.12e$-$03 & 3.59e$-$04 & 8.89e$-$06 & 1.96e$-$06 \\
        \hline
        \rowcolor{LightCyan}
        & PIRPNN & 4.95e$-$05 & 1.42e$-$06 & 2.55e$-$07 & 3.08e$-$06 & 1.49e$-$07 & 1.22e$-$08\\
        $u_2$ & \texttt{ode23t}  &  4.38e$-$01 & 1.04e$-$02 & 2.25e$-$03 & 6.18e$-$03 & 1.35e$-$04 & 3.23e$-$05\\
        & \texttt{ode15s} & 6.57e$-$01 & 1.49e$-$02 & 3.60e$-$03 & 2.94e$-$04 & 1.40e$-$05 & 1.59e$-$06 \\
        \hline
       \rowcolor{LightCyan}
        & PIRPNN & 3.78e$-$05 & 8.05e$-$07 & 2.12e$-$07 & 9.90e$-$07 & 3.06e$-$08 & 4.78e$-$09\\
        $u_3$ & \texttt{ode23t}  & 5.79e$-$01 & 1.27e$-$02 & 3.15e$-$03 & 7.88e$-$03 & 1.68e$-$04 & 4.36e$-$05\\
        & \texttt{ode15s} & 5.72e$-$01 & 1.22e$-$02 & 2.91e$-$03 & 3.02e$-$04 & 7.36e$-$06 & 1.58e$-$06 \\
        \hline
       \rowcolor{LightCyan}
        & PIRPNN & 5.13e$-$05 & 1.93e$-$06 & 2.49e$-$07 & 2.98e$-$06 & 1.34e$-$07 & 1.18e$-$08\\
        $u_4$ & \texttt{ode23t}  & 4.50e$-$01 & 1.35e$-$02 & 2.30e$-$03 & 6.42e$-$03 & 1.82e$-$04 & 3.27e$-$05\\
        & \texttt{ode15s} & 6.89e$-$01 & 2.03e$-$02 & 3.53e$-$03 & 3.87e$-$04 & 2.38e$-$05 & 1.86e$-$06 \\
        \hline
       \rowcolor{LightCyan}
        & PIRPNN & 7.53e$-$04 & 3.12e$-$05 & 4.11e$-$06 & 4.63e$-$05 & 2.00e$-$06 & 2.01e$-$07\\
        $u_5$ & \texttt{ode23t}  & 6.39e$+$00 & 1.39e$-$01 & 3.53e$-$02 & 9.00e$-$02 & 1.95e$-$03 & 5.05e$-$04\\
        & \texttt{ode15s} & 9.61e$+$00 & 2.04e$-$01 & 5.65e$-$02 & 4.56e$-$03 & 1.10e$-$04 & 2.60e$-$05 \\
    \hline
    \end{tabular}
}
\end{center}
\end{table}
Table~\ref{tab:ex11_accuracy} summarizes the $l^2$, $l^{\infty}$ and mean absolute (MAE) errors with respect to the reference solution in $15,000$ equally spaced grid points in the interval $[0 \quad 15]$. As shown, for the given tolerances, the proposed method outperforms \texttt{ode15s} and \texttt{ode23t} in all metrics.
\begin{figure}[ht]
    \centering
    \subfigure[]{
    \includegraphics[width=0.3 \textwidth]{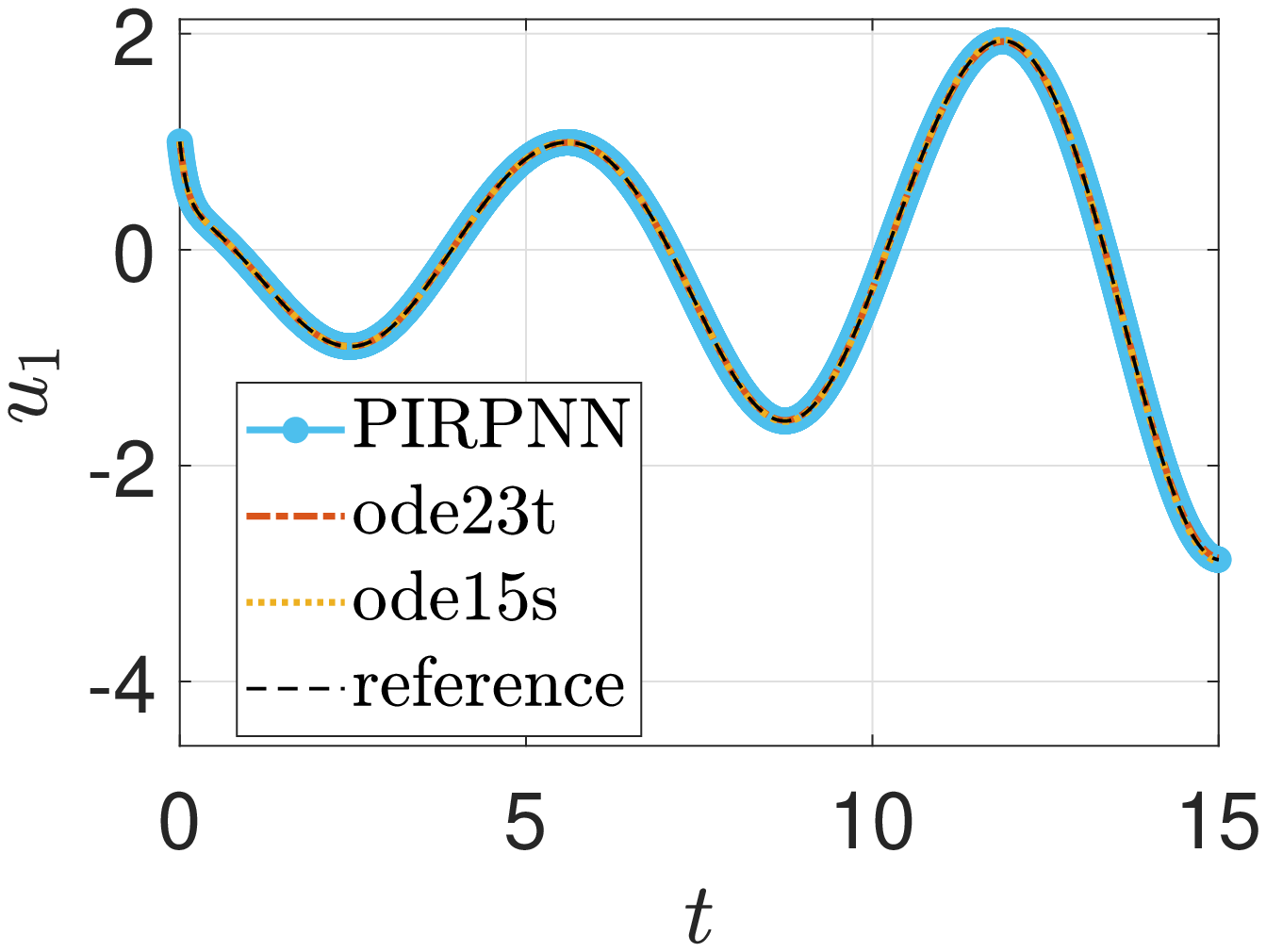}
    }
    \subfigure[]{
    \includegraphics[width=0.3 \textwidth]{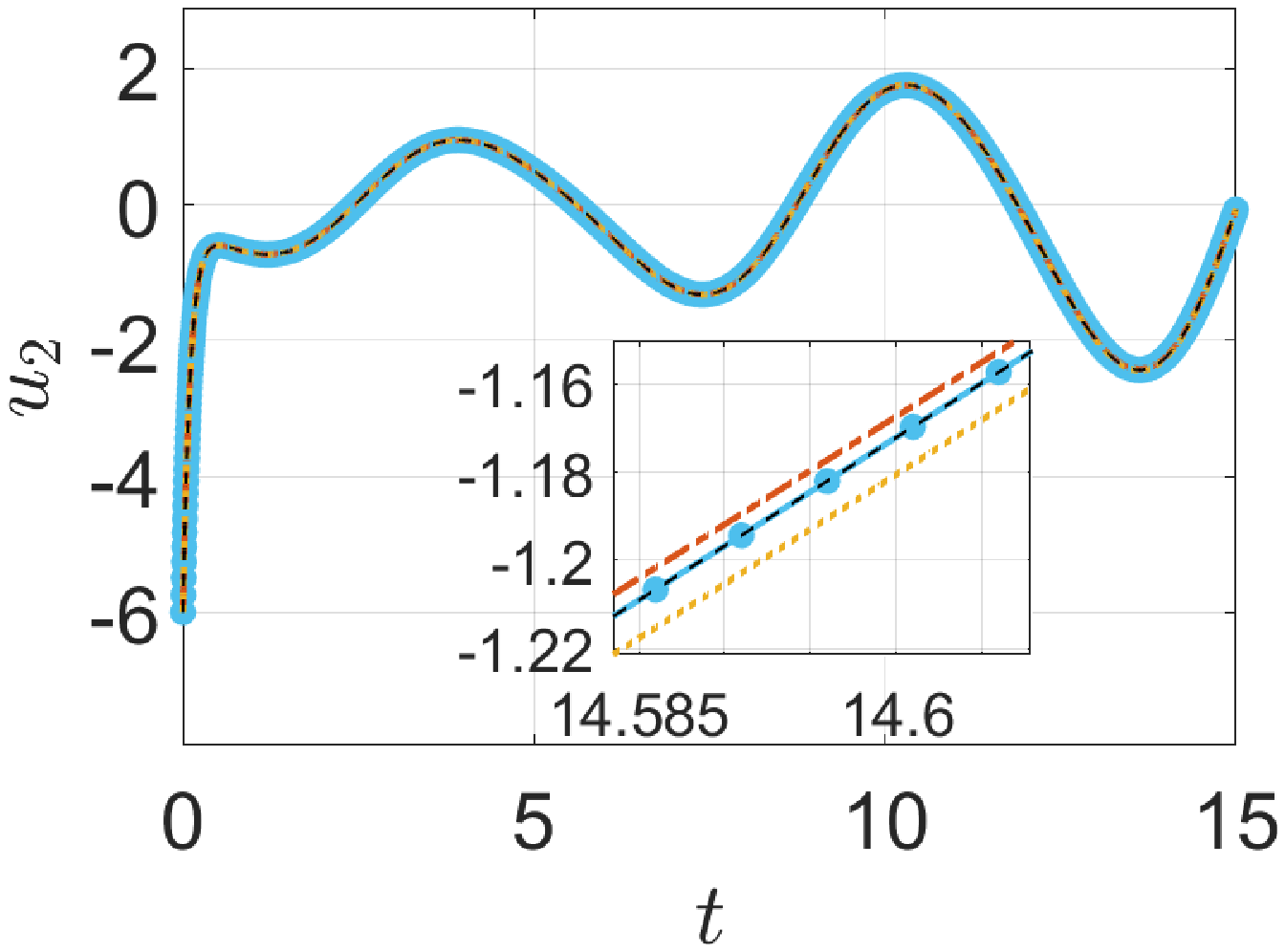}
    }
    \subfigure[]{
    \includegraphics[width=0.3 \textwidth]{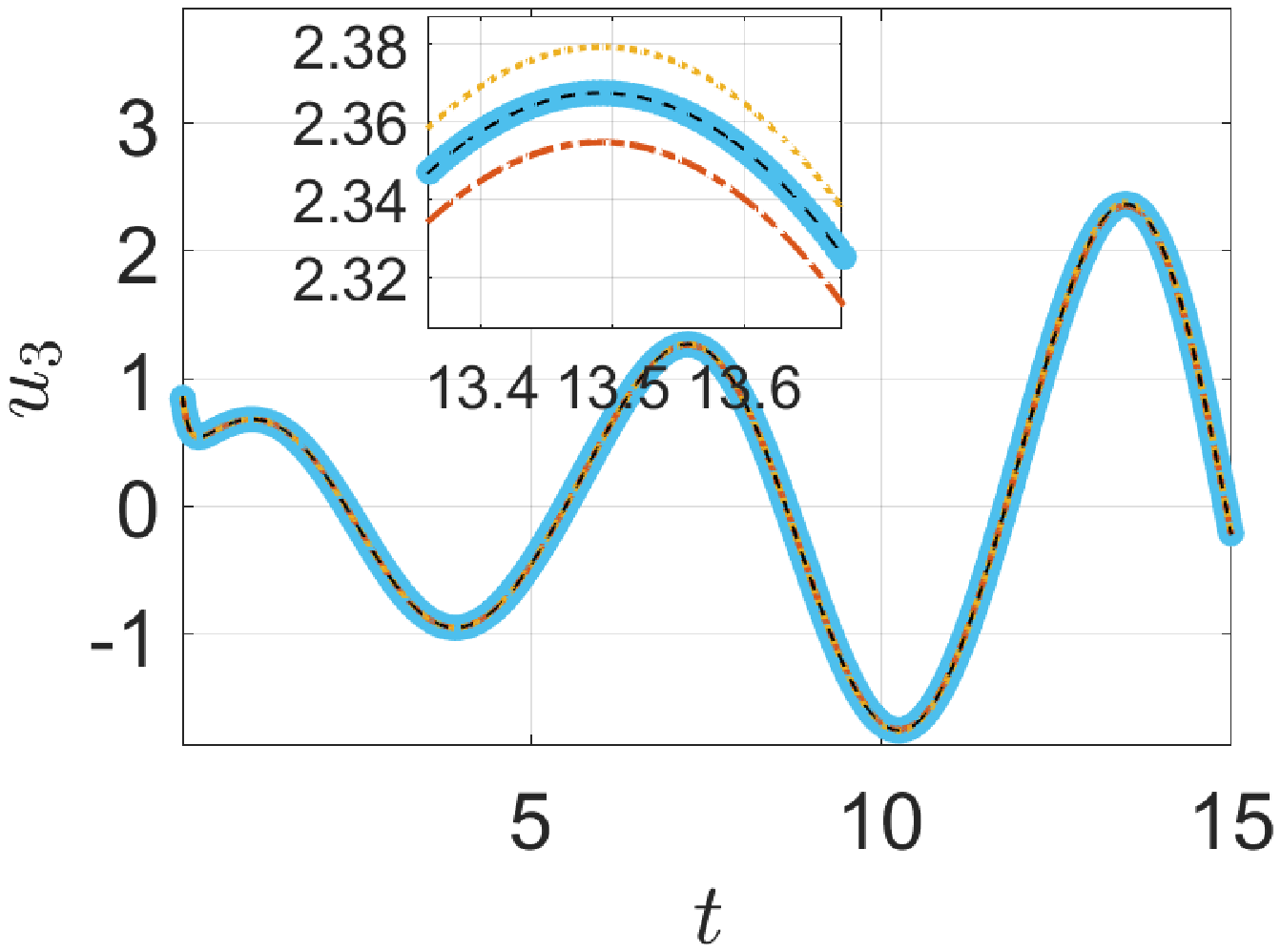}
    }
    \subfigure[]{
    \includegraphics[width=0.3 \textwidth]{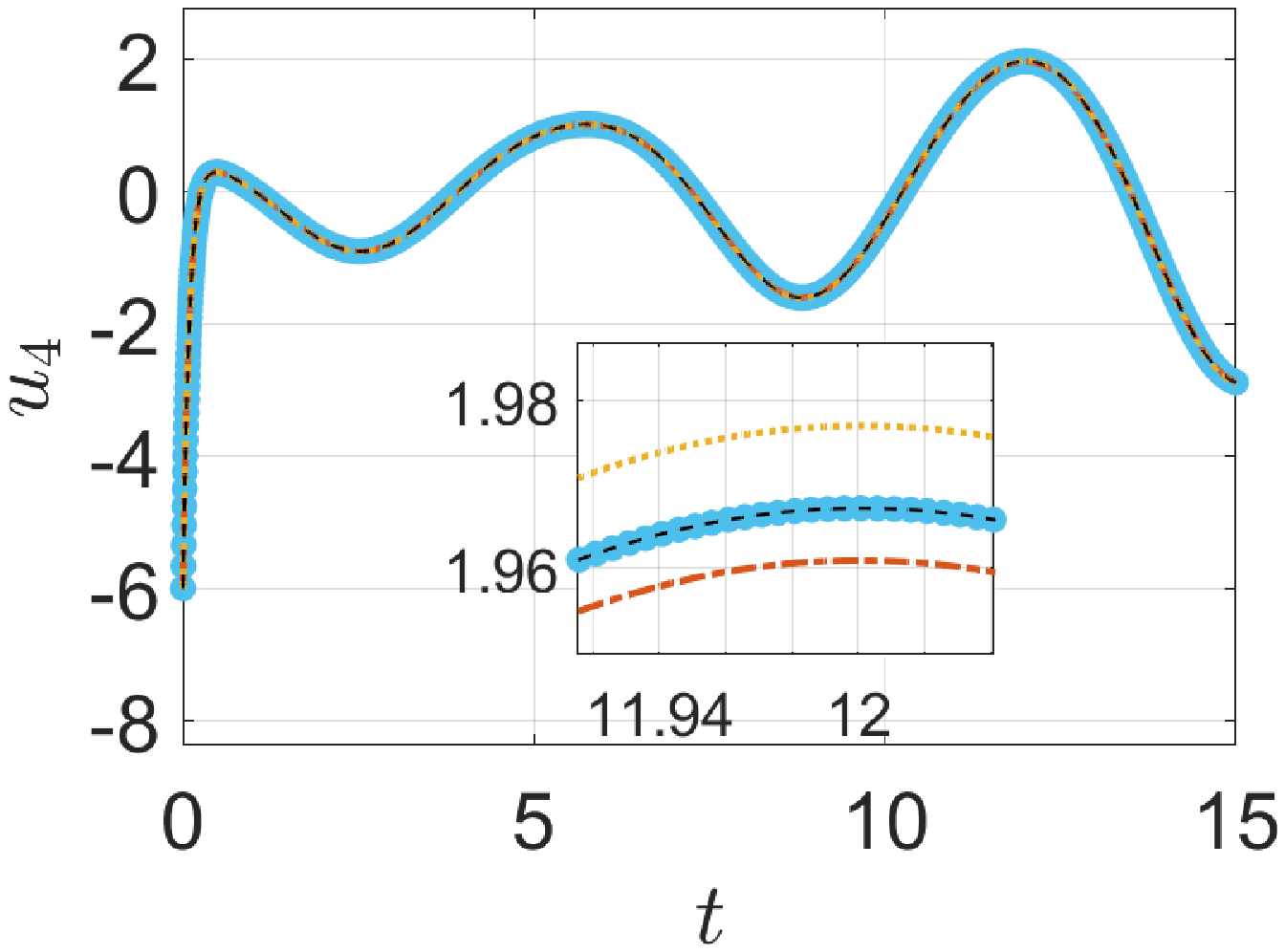}
    }
    \subfigure[]{
    \includegraphics[width=0.3 \textwidth]{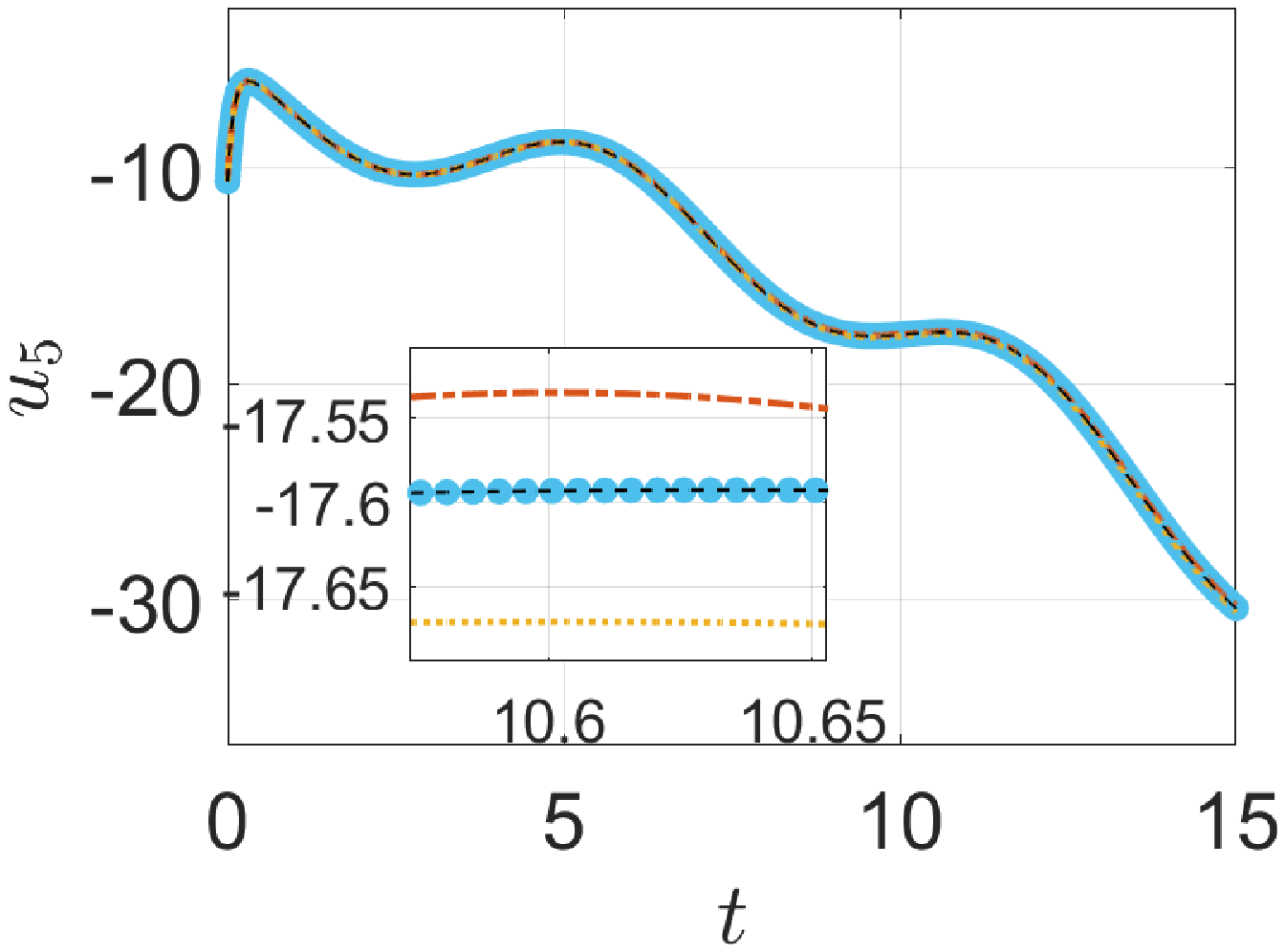}
    }
    \caption{The mechanics problem of non autonomous index-1 DAEs \eqref{eq:ex11} \cite{shampine1999solving}. Approximate solutions computed in the interval $[0 \quad 15]$ with both absolute and relative tolerances set to~1e$-$03. Insets depict zooms around the reference solution.\label{fig:ex11}}
\end{figure}
As it is shown in Figure~\ref{fig:ex11}, for tolerances 1e$-$03, the proposed scheme achieves more accurate solutions than \texttt{ode23t} and \texttt{ode15s}.
\begin{table}[ht]
\begin{center}
\caption{The mechanics problem of non autonomous index-1 DAEs \cite{shampine1999solving} \eqref{eq:ex11}. Computational times (s) (median, minimum and maximum over $10$ runs) and number of points required in the interval $[0 \quad 15]$ by the PIRPNN, \texttt{ode23t} and \texttt{ode15s} with both absolute and relative tolerances set to 1e$-$03 and 1e$-$06.}
{\footnotesize
\setlength{\tabcolsep}{3pt}
\begin{tabular}{|l|lllr|lllr|}
\hline
& \multicolumn{4}{c|}{$tol=$ 1e$-$03} & \multicolumn{4}{c|}{$tol=$ 1e$-$06} \\
\cline{2-9}
  & median & min & max & \multicolumn{1}{l|}{\# pts} & median & min & max & \multicolumn{1}{l|}{\# pts}\\
  \hline
  \rowcolor{LightCyan}
  PIRPNN & 1.72e$-$02 & 1.38e$-$02 & 3.09e$-$02 &  140 & 2.33e$-$02 & 1.96e$-$02 & 2.63e$-$02 & 200\\
  \texttt{ode23t}  & 4.97e$-$03 & 4.66e$-$03 & 3.23e$-$02 & 157 & 1.81e$-$02 & 1.72e$-$02 & 3.87e$-$02 & 1320\\
  \texttt{ode15s} & 4.71e$-$03 & 4.45e$-$03 & 1.08e$-$02 & 145 & 7.75e$-$03 & 7.57e$-$03 & 1.51e$-$02 & 295\\
  reference & 3.18e$-$01 & 3.06e$-$01 & 4.92e$-$01 & 20734 & 3.17e$-$01 & 3.05e$-$01 & 4.92e$-$01 & 20734\\
  \hline
\end{tabular}
}
\end{center}
\label{tab:ex11_time_points}
\end{table}
In Table~\ref{tab:ex11_time_points}, we report the computational times and number of points required by each method, including the ones required for computing the reference solution.
As shown, the corresponding total number of points required by the proposed scheme is comparable with the ones required by \texttt{ode15s} and significantly less than the number of points required by \texttt{ode23t} and the reference solution. Furthermore, the computational times of the proposed method are comparable with the ones required by the \texttt{ode23t} and \texttt{15s}, thus outperforming the ones required for computing the reference solution.

\subsection{Case Study 3: Power discharge control non autonomous index-1 DAEs model}
This is a non autonomous model of six index-1 DAEs and it is part of the benchmark problems considered in \cite{shampine1999solving}. The governing equations are:
\begin{equation}
\begin{aligned}
    &\frac{du_1}{dt} =\frac{(u_2-u_1)}{20},\\
    &\frac{du_2}{dt} = - \frac{(u_4-99.1)}{75},\\
    &\frac{du_3}{dt} = \mu-u_6,\\
    &0 = 20u_5-u_3 \\
     &0 = (3.35 - 0.075u_6 + 0.001u_6^2) - \frac{u_4}{u_5},\\
  &0 = \frac{u_3}{400}\frac{du_3}{dt}+\frac{\mu\mu_p}{(1.2u_1)^2}- \frac{du_1}{dt}\frac{\mu^2}{(1.44u_1)^3},\\
     &\mu =  15 + 5{tanh}(t - 10), \quad \mu_p = \frac{5}{{cosh}^2(t - 10)}.
\end{aligned}
\label{eq:ex13}
\end{equation}
The initial conditions are $u_1(0)=u_2(0)=0.25$, $u_3(0) =734,$ and consistent initial conditions for $u_4(0) = 99.08999492002$, $u_5(0)=36.7$ and $u_6(0)=10.00000251671$ were found at $t=0$ with Newton-Raphson with a tolerance of 1e$-$16.
\begin{table}[ht]
\begin{center}
\caption{Power discharge control non autonomous index-1 DAEs problem \eqref{eq:ex13} in $[0 \quad 40]$. $l^{2}$, $l^{\infty}$ and mean absolute (MAE) approximation errors obtained with both absolute and relative tolerances set to 1e$-$03 and 1e$-$06. \label{tab:ex13_accuracy}}
{\footnotesize
    \begin{tabular}{|l|l |l l l |l l l|}
    \hline
\multicolumn{2}{|c|}{} & \multicolumn{3}{c|}{$tol=$ 1e$-$03} & \multicolumn{3}{c|}{$tol=$ 1e$-$06} \\
\cline{3-8}
        \multicolumn{2}{|c|}{} & $l^2$ & $l^{\infty}$ & MAE & $l^2$ & $l^{\infty}$ & MAE \\
        \hline
        \rowcolor{LightCyan}
        & PIRPNN & 1.37e$-$05 & 1.46e$-$07 & 5.59e$-$08 & 2.69e$-$06 & 2.27e$-$08 & 1.13e$-$08\\
        $u_1$ & \texttt{ode23t}  & 1.98e$-$01 & 1.61e$-$03 & 8.40e$-$04 & 1.65e$-$03 & 1.21e$-$05 & 7.07e$-$06\\
        & \texttt{ode15s} & 1.83e$-$01 & 1.48e$-$03 & 7.96e$-$04 & 3.46e$-$04 & 2.32e$-$06 & 1.55e$-$06 \\
        \hline
        \rowcolor{LightCyan}
        & PIRPNN & 1.72e$-$04 & 3.27e$-$05 & 1.31e$-$07 & 4.49e$-$06 & 6.44e$-$08 & 1.96e$-$08\\
        $u_2$ & \texttt{ode23t}  &  8.31e$-$01 & 1.04e$-$02 & 3.05e$-$03 & 2.47e$-$03 & 1.91e$-$05 & 1.10e$-$05\\
        & \texttt{ode15s} & 3.10e$-$01 & 2.80e$-$03 & 1.39e$-$03 & 5.45e$-$04 & 5.56e$-$06 & 2.47e$-$06 \\
        \hline
       \rowcolor{LightCyan}
        & PIRPNN & 1.02e$-$02 & 1.18e$-$04 & 3.96e$-$05 & 1.32e$-$04 & 9.95e$-$07 & 5.67e$-$07\\
        $u_3$ & \texttt{ode23t}  & 9.36e$+$00 & 8.64e$-$02 & 3.74e$-$02 & 1.26e$-$01 & 1.21e$-$03 & 5.05e$-$04\\
        & \texttt{ode15s} & 7.05e$+$00 & 8.12e$-$02 & 2.78e$-$02 & 1.60e$-$02 & 1.50e$-$04 & 7.13e$-$05 \\
        \hline
       \rowcolor{LightCyan}
        & PIRPNN & 2.49e$-$03 & 4.87e$-$04 & 1.49e$-$06 & 1.96e$-$05 & 8.32e$-$07 & 6.50e$-$08\\
        $u_4$ & \texttt{ode23t}  & 1.23e$+$01 & 1.56e$-$01 & 4.32e$-$02 & 1.15e$-$02 & 1.37e$-$04 & 4.57e$-$05\\
        & \texttt{ode15s} & 1.23e$+$00 & 2.17e$-$02 & 4.02e$-$03 & 4.42e$-$03 & 4.93e$-$05 & 1.91e$-$05\\
        \hline
       \rowcolor{LightCyan}
        & PIRPNN & 5.12e$-$04 & 5.88e$-$06 & 1.98e$-$06 & 6.60e$-$06 & 4.98e$-$08 & 2.83e$-$08\\
        $u_5$ & \texttt{ode23t}  & 4.68e$-$01 & 4.32e$-$03 & 1.87e$-$03 & 6.28e$-$03 & 6.03e$-$05 & 2.52e$-$05\\
        & \texttt{ode15s} & 3.53e$-$01 & 4.06e$-$03 & 1.39e$-$03 & 8.00e$-$04 & 7.50e$-$06 & 3.57e$-$06 \\
        \hline
       \rowcolor{LightCyan}
        & PIRPNN & 1.36e$-$02 & 2.81e$-$03 & 5.58e$-$06 & 3.30e$-$05 & 5.35e$-$06 & 5.06e$-$08\\
        $u_6$ & \texttt{ode23t}  & 8.07e$+$00 & 1.08e$-$01 & 2.84e$-$02 & 5.71e$-$02 & 8.05e$-$04 & 2.04e$-$04\\
        & \texttt{ode15s} & 1.31e$+$00 & 5.72e$-$02 & 3.54e$-$03 & 3.74e$-$03 & 1.32e$-$04 & 1.37e$-$05 \\
    \hline
    \end{tabular}
}
\end{center}
\end{table}
Table~\ref{tab:ex13_accuracy} summarizes the $l^2$, $l^{\infty}$ and mean absolute (MAE) approximation errors with respect to the reference solution in $40,000$ equally spaced grid points in the interval $[0 \quad 40]$. As shown, for the given tolerances, the proposed method outperforms \texttt{ode15s} and \texttt{ode23t} in all metrics.
\begin{figure}[ht]
    \centering
    \subfigure[]{
    \includegraphics[width=0.3 \textwidth]{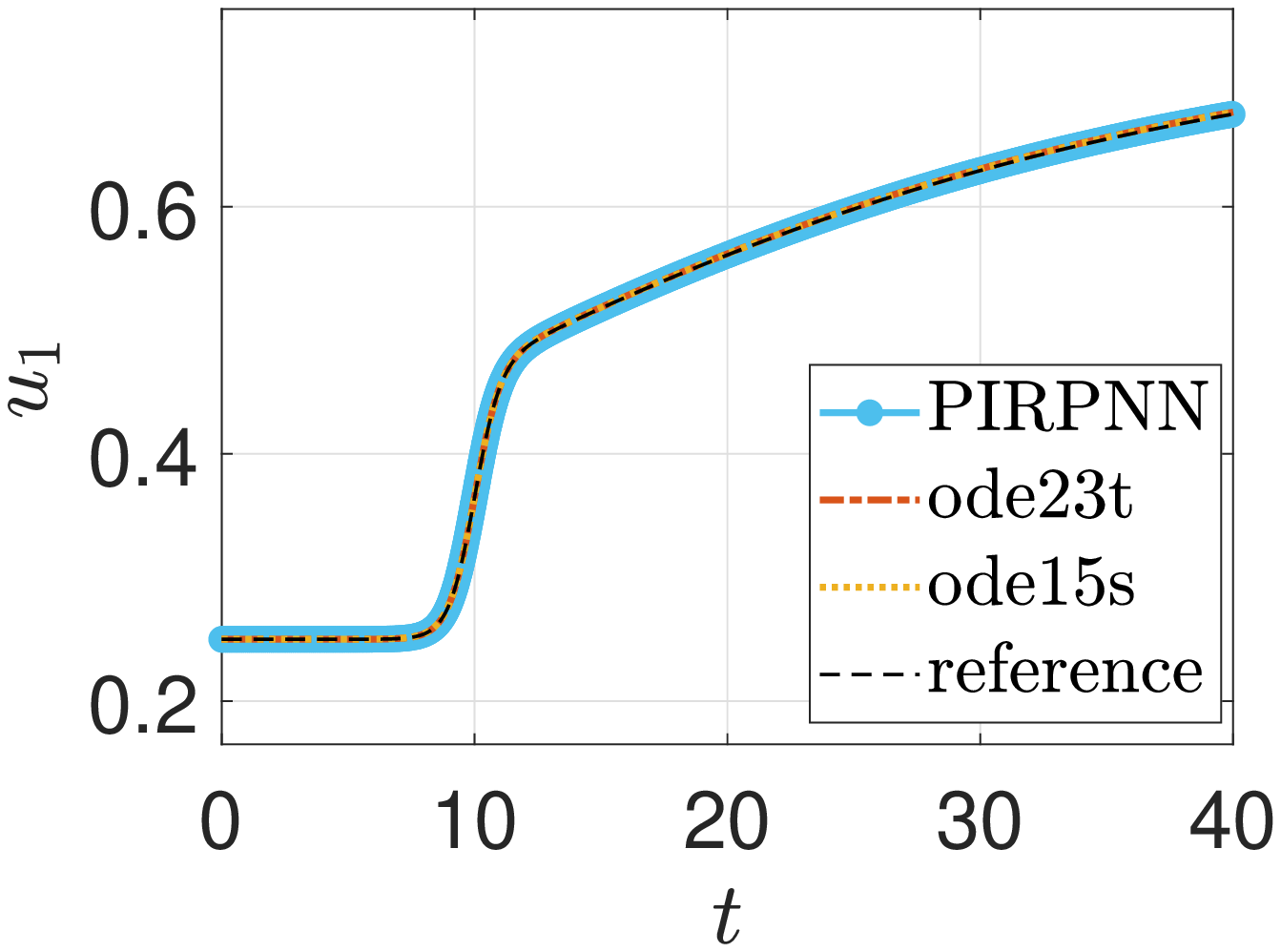}
    }
    \subfigure[]{
    \includegraphics[width=0.3 \textwidth]{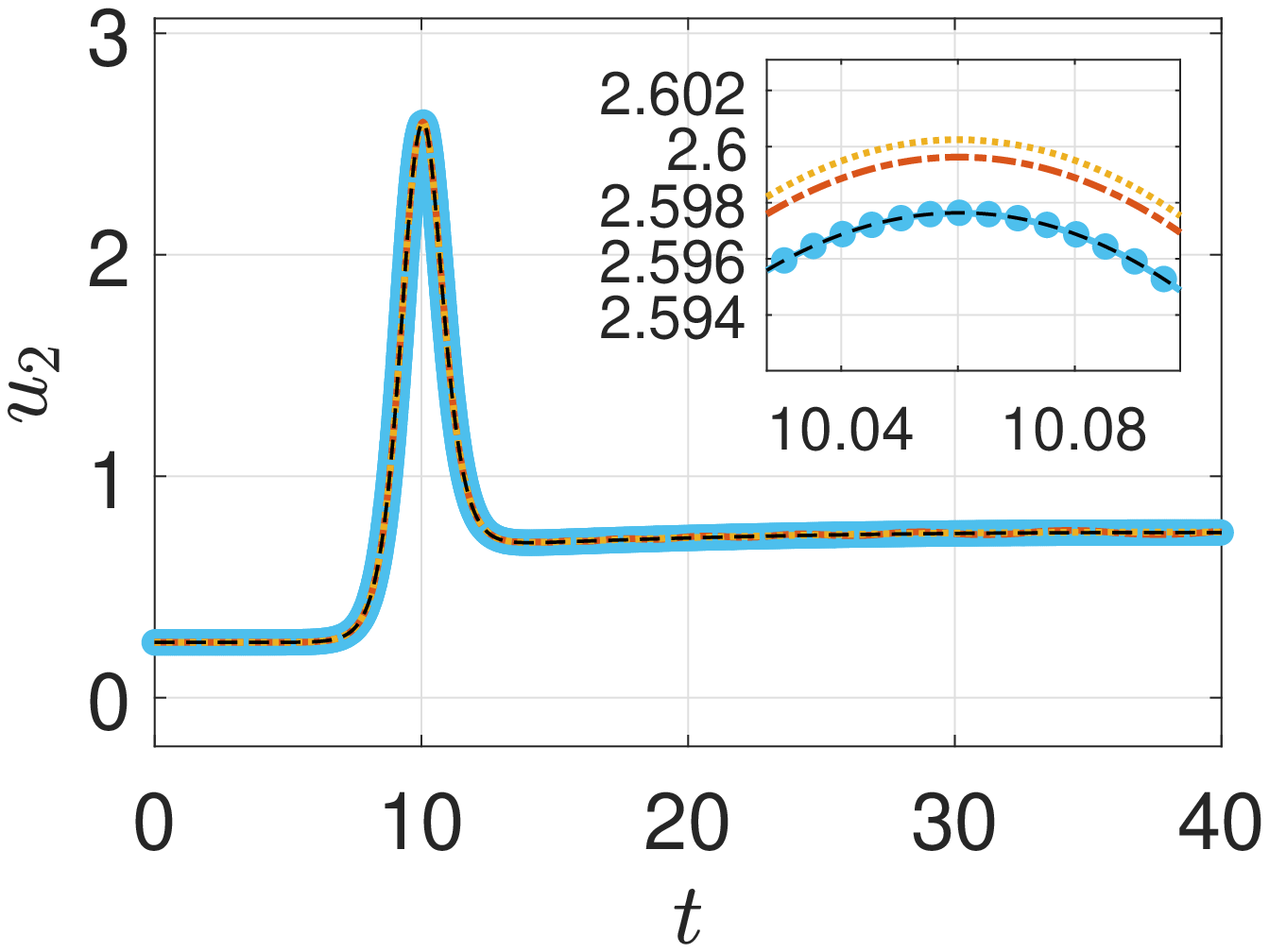}
    }
    \subfigure[]{
    \includegraphics[width=0.3 \textwidth]{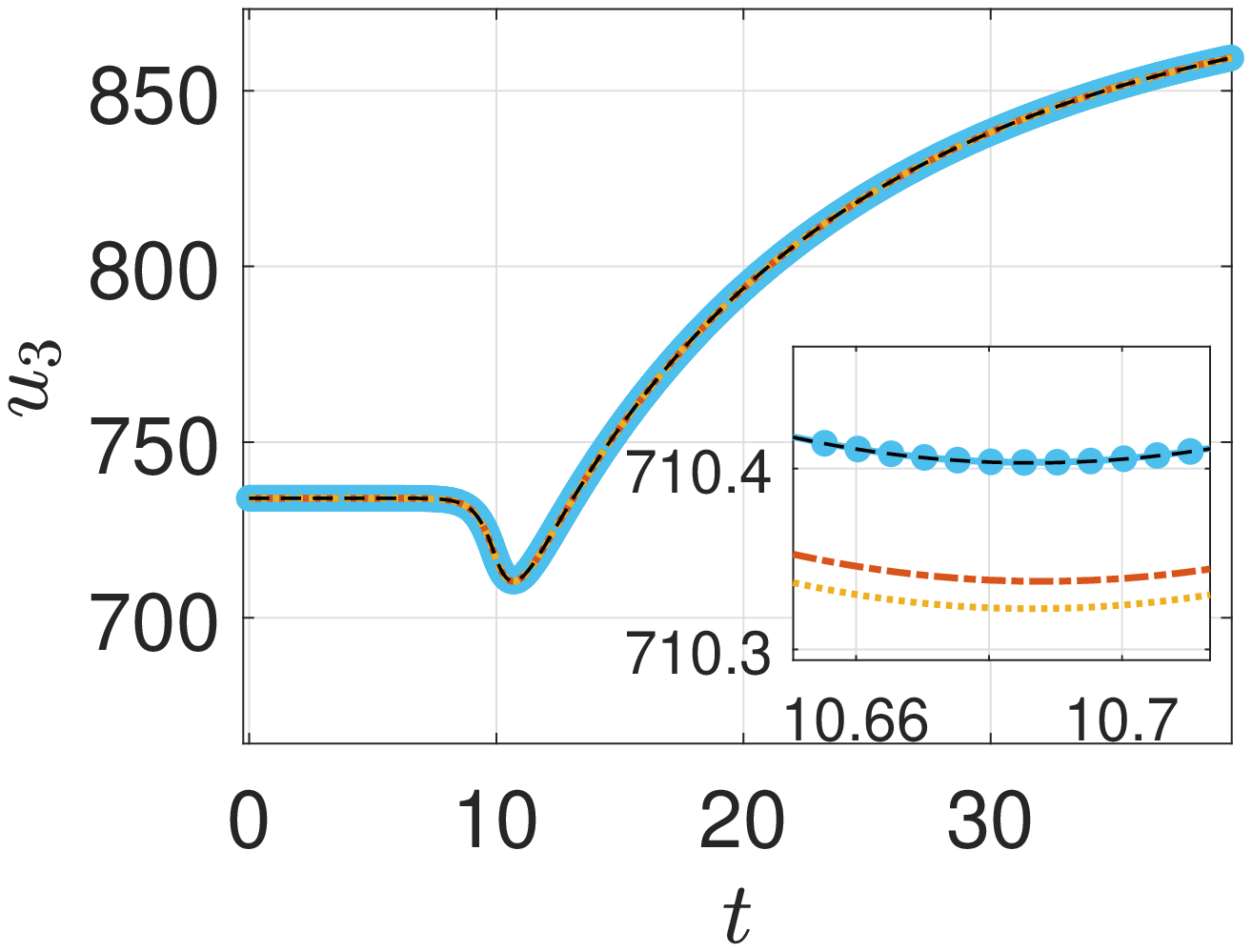}
    }
    \subfigure[]{
    \includegraphics[width=0.3 \textwidth]{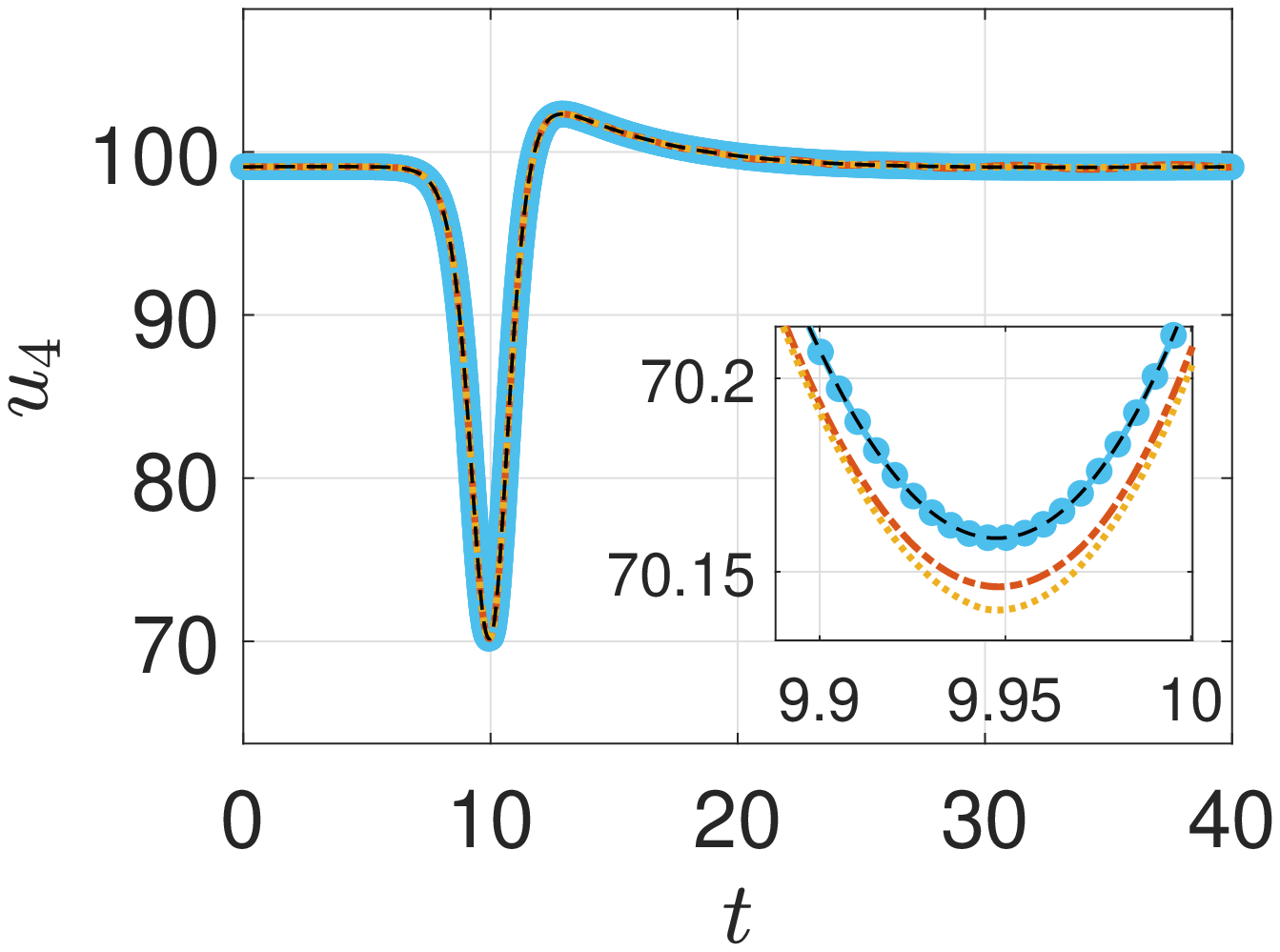}
    }
    \subfigure[]{
    \includegraphics[width=0.3 \textwidth]{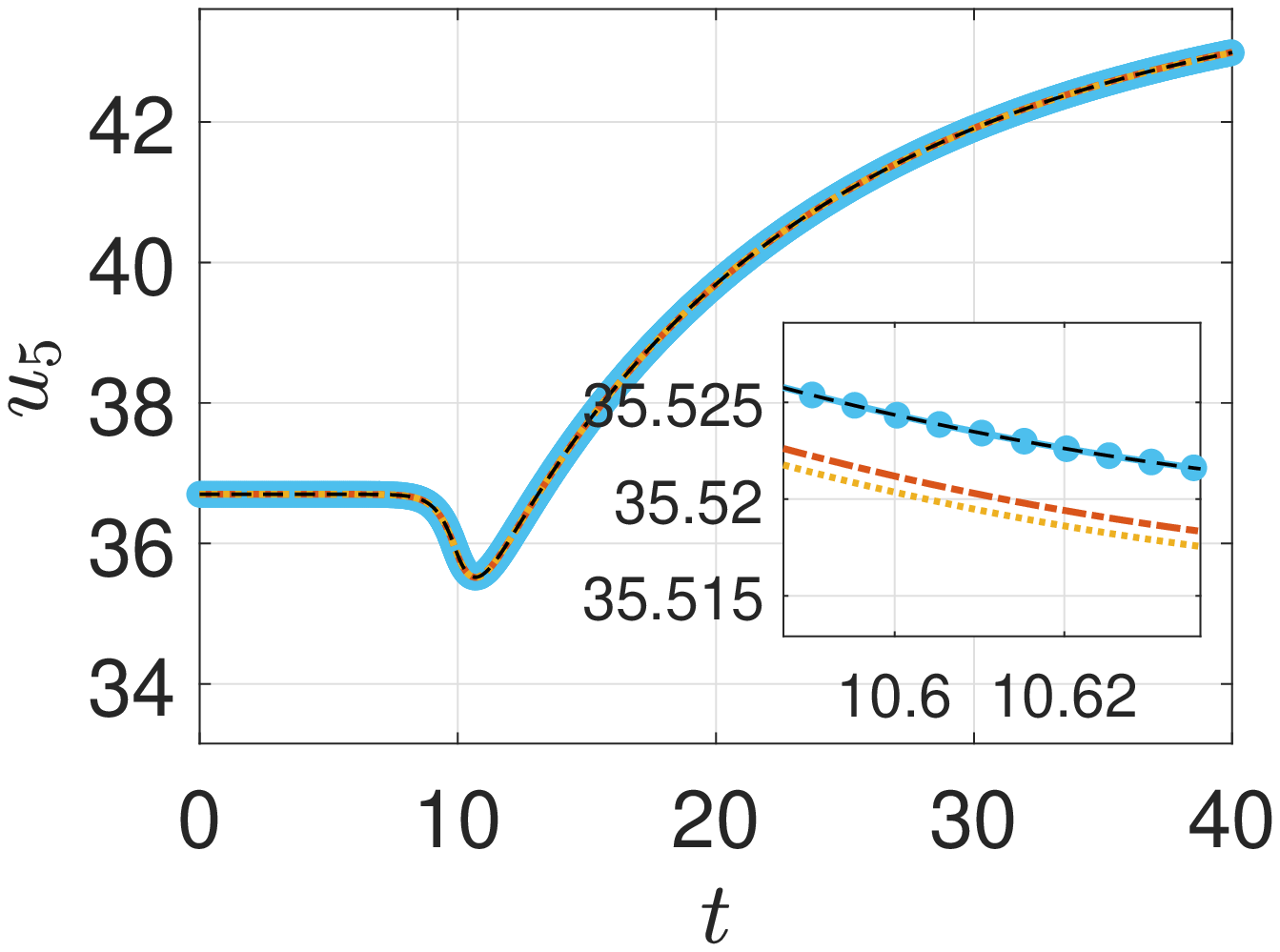}
    }
    \subfigure[]{
    \includegraphics[width=0.3 \textwidth]{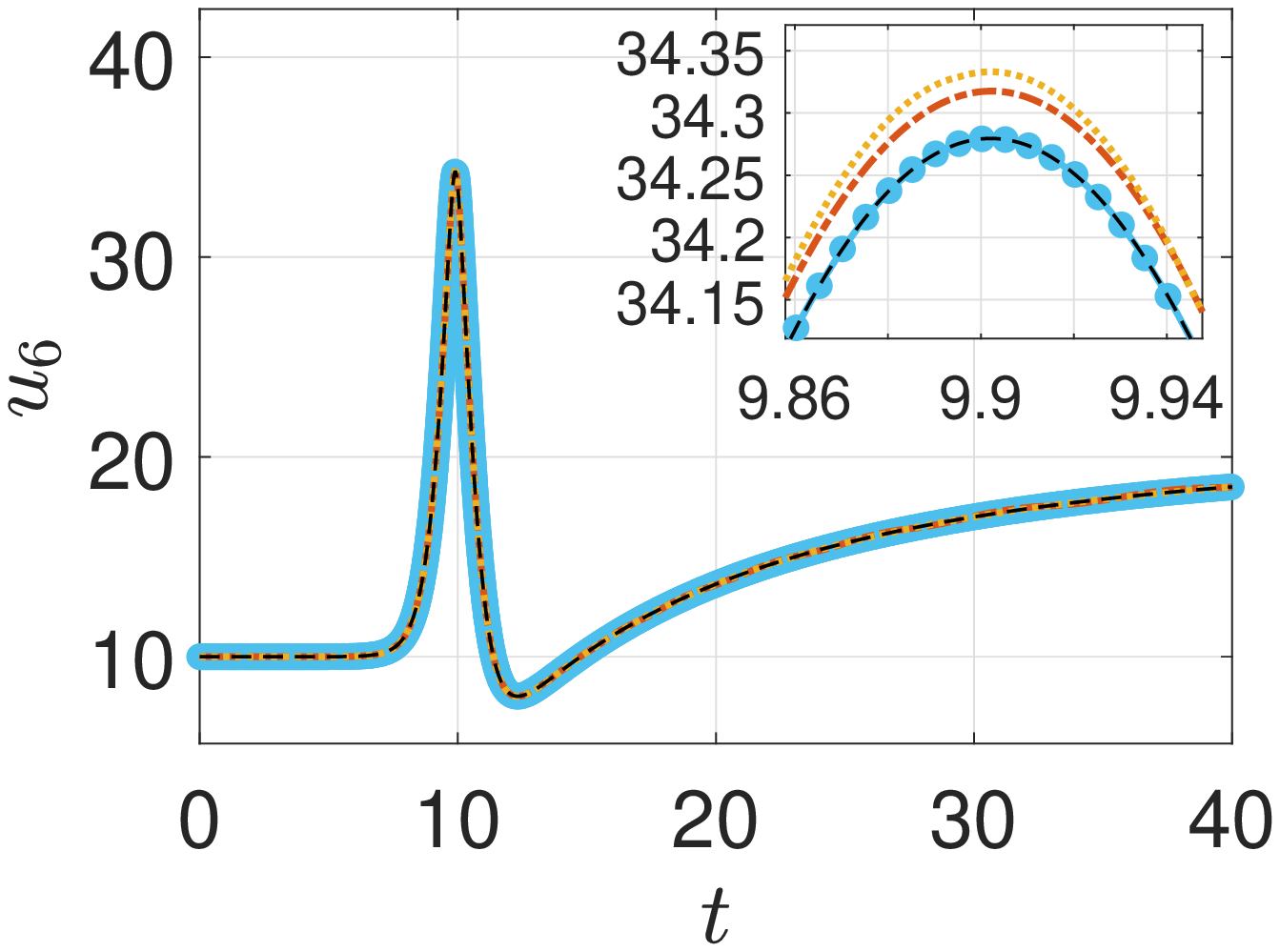}
    }
    \caption{Power discharge control non autonomous index-1 DAEs problem \eqref{eq:ex13} \cite{shampine1999solving}. Approximate solutions computed in the interval $[0 \quad 40]$ with both absolute and relative tolerances set to 1e$-$03. Insets depict zooms around the reference solution. \label{fig:ex13}}
\end{figure}
As it is shown in Figure~\ref{fig:ex13}, for tolerances 1e$-$03, the proposed scheme achieves more accurate solutions than \texttt{ode23t} and \texttt{ode15s}.
\begin{table}[ht]
\begin{center}
\caption{Power discharge control non autonomous index-1 DAEs problem \eqref{eq:ex13}. Computational times (median, minimum and maximum over $10$ runs) and number of points required in the interval $[0 40]$ by the PIRPNN, \texttt{ode23t} and \texttt{ode15s} with both absolute and relative tolerances set to 1e$-$03 and 1e$-$06.}
{\footnotesize
\setlength{\tabcolsep}{3pt}
\begin{tabular}{|l|lllr|lllr|}
\hline
& \multicolumn{4}{c|}{$tol=$ 1e$-$03} & \multicolumn{4}{c|}{$tol=$ 1e$-$06} \\
\cline{2-9}
  & median & min & max & \multicolumn{1}{l|}{\# pts} & median & min & max & \multicolumn{1}{l|}{\# pts}\\
  \hline
  \rowcolor{LightCyan}
  PIRPNN & 8.55e$-$02 & 7.75e$-$02 & 9.93e$-$02 &  394 & 9.26e$-$02 & 8.65e$-$02 & 1.16e$-$01 & 470\\
  \texttt{ode23t}  & 4.30e$-$03 & 2.65e$-$03 & 5.67e$-$02 & 69 & 1.06e$-$02 & 9.52e$-$03 & 4.97e$-$02 & 642\\
  \texttt{ode15s} & 2.93e$-$03 & 2.61e$-$03 & 1.49e$-$02 & 77 & 6.04e$-$03 & 5.68e$-$03 & 1.85e$-$02 & 229\\
  reference & 9.01e$-$02 & 8.88e$-$02 & 1.75e$-$01 & 5093 & 9.01e$-$02 & 8.88e$-$02 & 1.75e$-$01 & 5093\\
  \hline
\end{tabular}
}
\end{center}
\label{tab:ex13_time_points}
\end{table}
In Table~\ref{tab:ex13_time_points}, we report the computational times and number of points required by each method, including the ones required for computing the reference solution. As shown, the corresponding total corresponding number of points required by the proposed scheme is comparable with the ones required by \texttt{ode23t} and \texttt{ode15s} and significantly less than the number of points required by the reference solution.
Furthermore, the computational times of the proposed method are comparable with the ones required by the \texttt{ode23t} and \texttt{ode15s}, thus outperforming the ones required for computing the reference solution.

\subsection{Case Study 4: The Chemical Akzo Nobel index-1 DAE model}
The Chemical Akzo Nobel problem is a benchmark problem made up of six non-linear index-1 DAEs.
This problem originates from Akzo Nobel research center in Amsterdam and was described in \cite{mazzia2012test,stortelder1998parameter}.
The resulting system of index-1 DAEs is given by:
\begin{equation}
    \begin{aligned}
    &\dfrac{du_1}{dt} &= -2 k_1 u_1^4 u_2^{\frac{1}{2}} - \dfrac{k_2}{K} u_1u_5 + k_2 u_3 u_4 - k_3 u_1 u_4^2\\
    &\dfrac{ du_2 }{ dt } &= -\dfrac{1}{2} k_1 u_1^4 u_2^{\frac{1}{2}} - k_3 u_1 u_4^2 - \dfrac{1}{2} k_4 u_6^2 u_2^{\frac{1}{2}} + k_{in}\biggl(\dfrac{\rho}{H}-u_2\biggr)\\
    &\dfrac{ du_3 }{ dt } &= k_1 u_1^4 u_2^{\frac{1}{2}} + \dfrac{k_2}{K}u_1u_5 - k_2u_3u_4\\
    &\dfrac{ du_4 }{ dt } &= -\dfrac{k_2}{K} u_1u_5 + k_2 u_3 u_4 + k_4 u_6^2 u_2^{\frac{1}{2}}\\
    &\dfrac{ du_5 }{ dt } &= \dfrac{k_2}{K} u_1u_5 - k_2 u_3 u_4 - 2 k_3 u_1 u_4^2\\
    &0 &= K_s u_1 u_4 - u_6,
    \end{aligned}
    \label{eq:chemakzo}
\end{equation}
where $u_1,\, u_2,\dots,\, u_6$ denote the concentrations of 6 chemical species, $k_1=18.7, \, k_2=0.58, \, k_3=0.09, \, k_4=0.42$ and  $K=34.4$ are the reaction rate coefficients, $K_s=115.83$ is a coefficient of proportionality between $u_1u_4$ and $u_6$, and the costant injection of $u_2$ in the system is governed by a rate $k_{in}=3.3,\,$ and constants $\rho=0.9$ and $H=737$. The initial conditions are set as: $u_1=0.444, \, u_2= 0.0012,\, u_3=0,\, u_4=0.007,\, u_5=0$.

\begin{table}[ht]
\begin{center}
\caption{The chemical Akzo Nobel problem \eqref{eq:chemakzo}. Absolute error ($l^2$-norm, $l^{\infty}$-norm and MAE) for the solutions computed with tolerances set to 1e$-$03 and 1e$-$06. The reference solution was obtained with \texttt{ode15s} with tolerances set to 1e$-$14.\label{tab:chemakzo_accuracy}}
    {\footnotesize
    \begin{tabular}{|l|l |l l l |l l l|}
        \hline
\multicolumn{2}{|c|}{} & \multicolumn{3}{c|}{$tol=$ 1e$-$03} & \multicolumn{3}{c|}{$tol=$ 1e$-$06} \\
\cline{3-8}
        \multicolumn{2}{|c|}{} & $l^2$ & $l^{\infty}$ & MAE & $l^2$ & $l^{\infty}$ & MAE\\
        \hline
        \rowcolor{LightCyan}
        & PIRPNN   & 6.09e$-$04 & 3.84e$-$06 & 1.24e$-$06 & 1.35e$-$06 & 8.22e$-$09 & 2.42e$-$09\\
        $u_1$ & \texttt{ode23t}  & 7.17e$-$02 & 5.90e$-$04 & 1.19e$-$04 & 1.88e$-$03 & 5.48e$-$06 & 4.29e$-$06\\
        & \texttt{ode15s} &  1.36e$-$01 & 1.21e$-$03 & 1.62e$-$04 & 2.87e$-$04 & 1.80e$-$06 & 6.11e$-$07\\
        \hline
        \rowcolor{LightCyan}
        & PIRPNN   & 8.66e$-$06 & 5.43e$-$07 & 7.98e$-$09 & 2.74e$-$08 & 7.97e$-$10 & 2.64e$-$11\\
        $u_2$ & \texttt{ode23t}  & 8.24e$-$03 & 5.92e$-$05 & 1.74e$-$05  & 7.20e$-$06 & 5.41e$-$07 & 6.95e$-$09\\
        & \texttt{ode15s} & 1.33e$-$03 & 9.40e$-$05 & 9.48e$-$07 & 9.85e$-$06 & 9.88e$-$07 & 4.42e$-$09 \\
        \hline
        \rowcolor{LightCyan}
        & PIRPNN   & 3.01e$-$04 & 1.92e$-$06 & 6.10e$-$07 & 6.55e$-$07 & 4.04e$-$09 & 1.17e$-$09\\
        $u_3$ & \texttt{ode23t}  & 3.48e$-$02 & 2.86e$-$04 & 5.82e$-$05 & 9.38e$-$04 & 2.74e$-$06 & 2.14e$-$06\\
        & \texttt{ode15s} & 6.94e$-$02 & 6.15e$-$04 & 8.16e$-$05 & 1.44e$-$04 & 8.98e$-$07 & 3.07e$-$07 \\
        \hline
        \rowcolor{LightCyan}
        & PIRPNN   & 1.13e$-$05 & 5.57e$-$08 & 2.14e$-$08 & 4.93e$-$08 & 4.96e$-$10 & 7.16e$-$11\\
        $u_4$ & \texttt{ode23t}  & 2.93e$-$03 & 2.09e$-$05 & 3.93e$-$06 & 3.45e$-$05 & 2.40e$-$07 & 5.93e$-$08 \\
        & \texttt{ode15s} &  3.62e$-$03 & 4.11e$-$05 & 4.50e$-$06 & 7.45e$-$06 & 4.90e$-$08 & 1.25e$-$08 \\
        \hline
        \rowcolor{LightCyan}
        & PIRPNN   & 4.22e$-$04 & 1.54e$-$06 & 7.22e$-$07 &  3.26e$-$07 & 1.17e$-$09 & 6.48e$-$10\\
        $u_5$ & \texttt{ode23t}  & 1.97e$-$02 & 5.51e$-$05 & 4.44e$-$05 & 3.04e$-$04 & 8.23e$-$07 & 6.86e$-$07\\
        & \texttt{ode15s} & 2.75e$-$02 & 9.01e$-$05 & 6.25e$-$05 & 4.61e$-$05 & 1.53e$-$07 & 1.03e$-$07\\
        \hline
        \rowcolor{LightCyan}
        & PIRPNN & 2.59e$-$04 & 2.58e$-$06 & 5.09e$-$07 &  2.52e$-$06 & 4.25e$-$08 & 2.31e$-$09\\
        $u_6$ & \texttt{ode23s}  & 1.42e$-$01 & 1.03e$-$03 & 2.54e$-$04  & 1.06e$-$03 & 7.97e$-$06 & 1.77e$-$06\\
        & \texttt{ode15s} & 8.30e$-$02 & 9.17e$-$04 & 1.06e$-$04 & 1.71e$-$04 & 1.42e$-$06 & 2.80e$-$07\\
        \hline
    \end{tabular}
    }
\end{center}
\end{table}
Table~\ref{tab:chemakzo_accuracy} summarizes the approximation errors, in terms of $l^2$-norm and $l^{\infty}$-norm errors and MAE, with respect to the reference solution in $180,000$ equally spaced grid points in the interval $[0 \quad 180]$. As shown, for the given tolerances, the proposed method outperforms \texttt{ode15s} and \texttt{ode23t} in all metrics.

\begin{figure}
    \centering
    \subfigure[]{\includegraphics[width=0.3\textwidth]{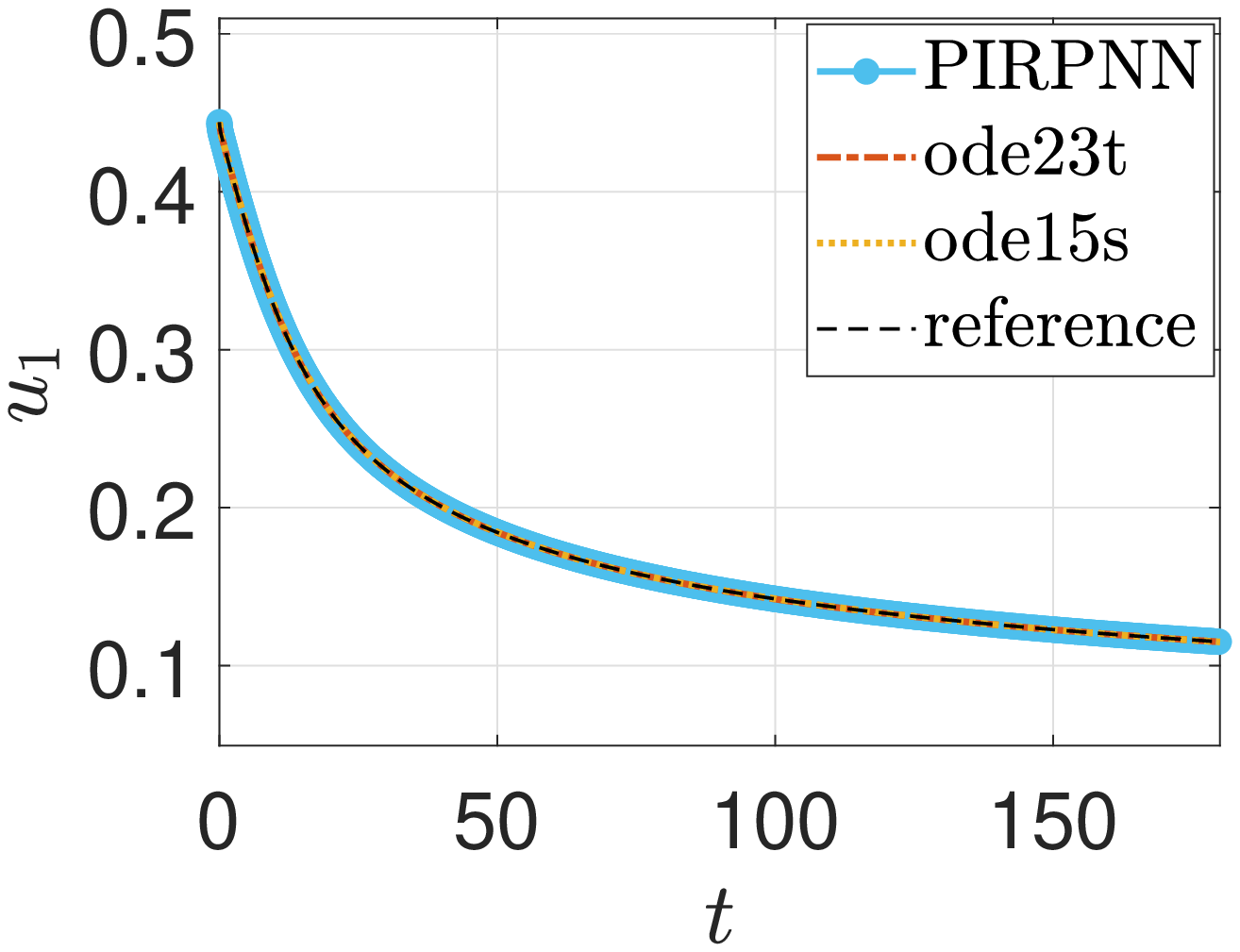}}
    \subfigure[]{\includegraphics[width=0.3\textwidth]{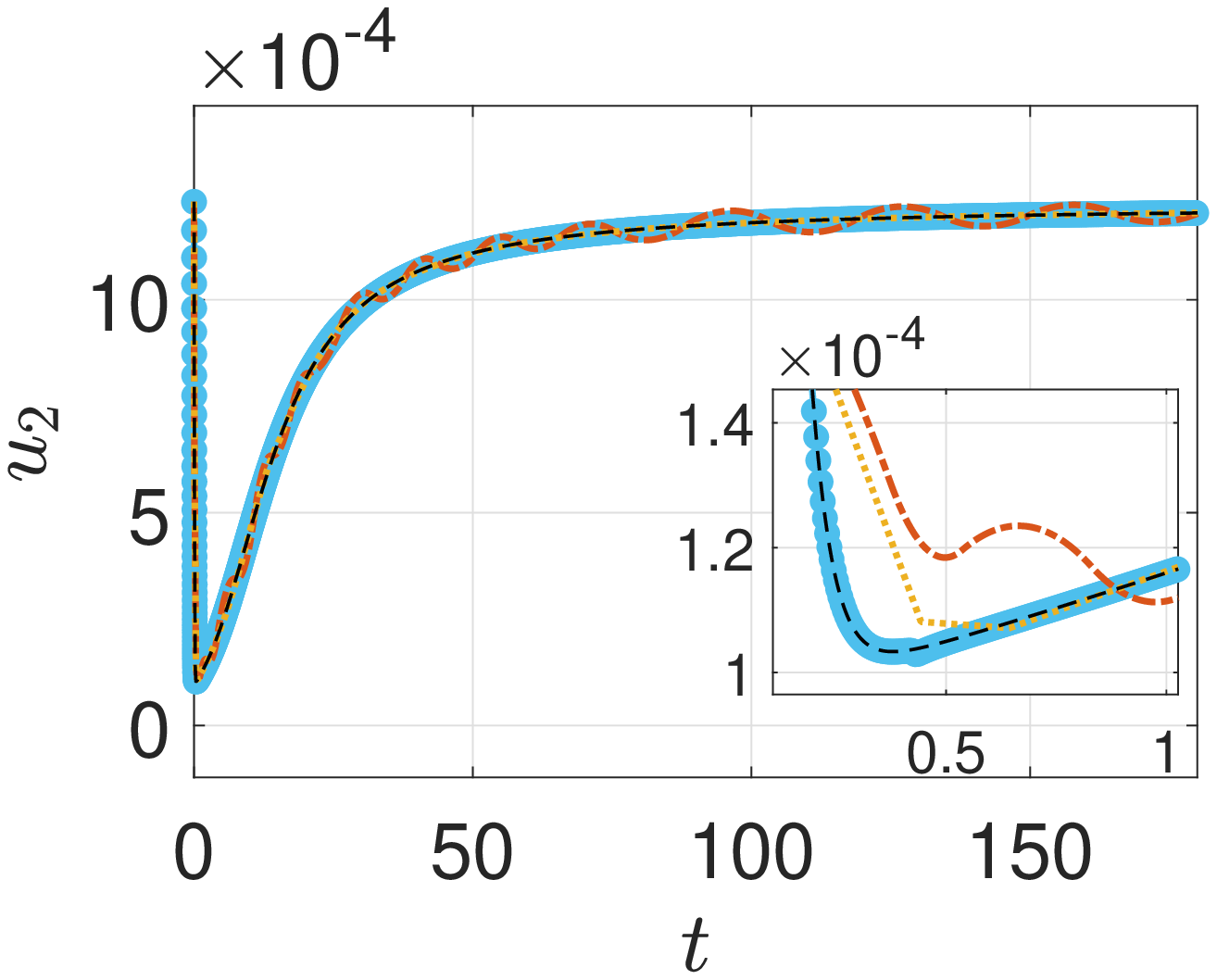}}
    \subfigure[]{\includegraphics[width=0.3\textwidth]{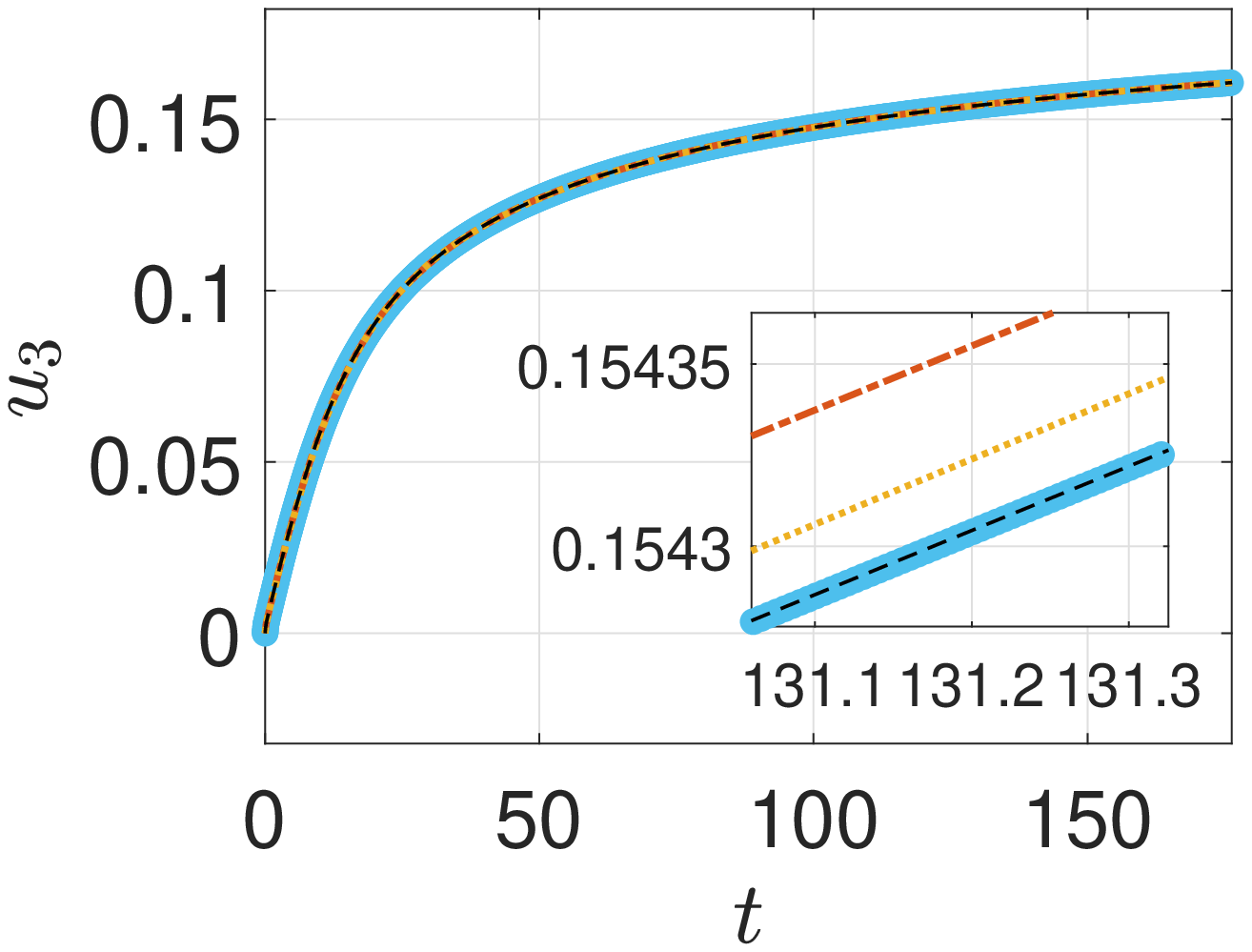}}
    \subfigure[]{\includegraphics[width=0.3\textwidth]{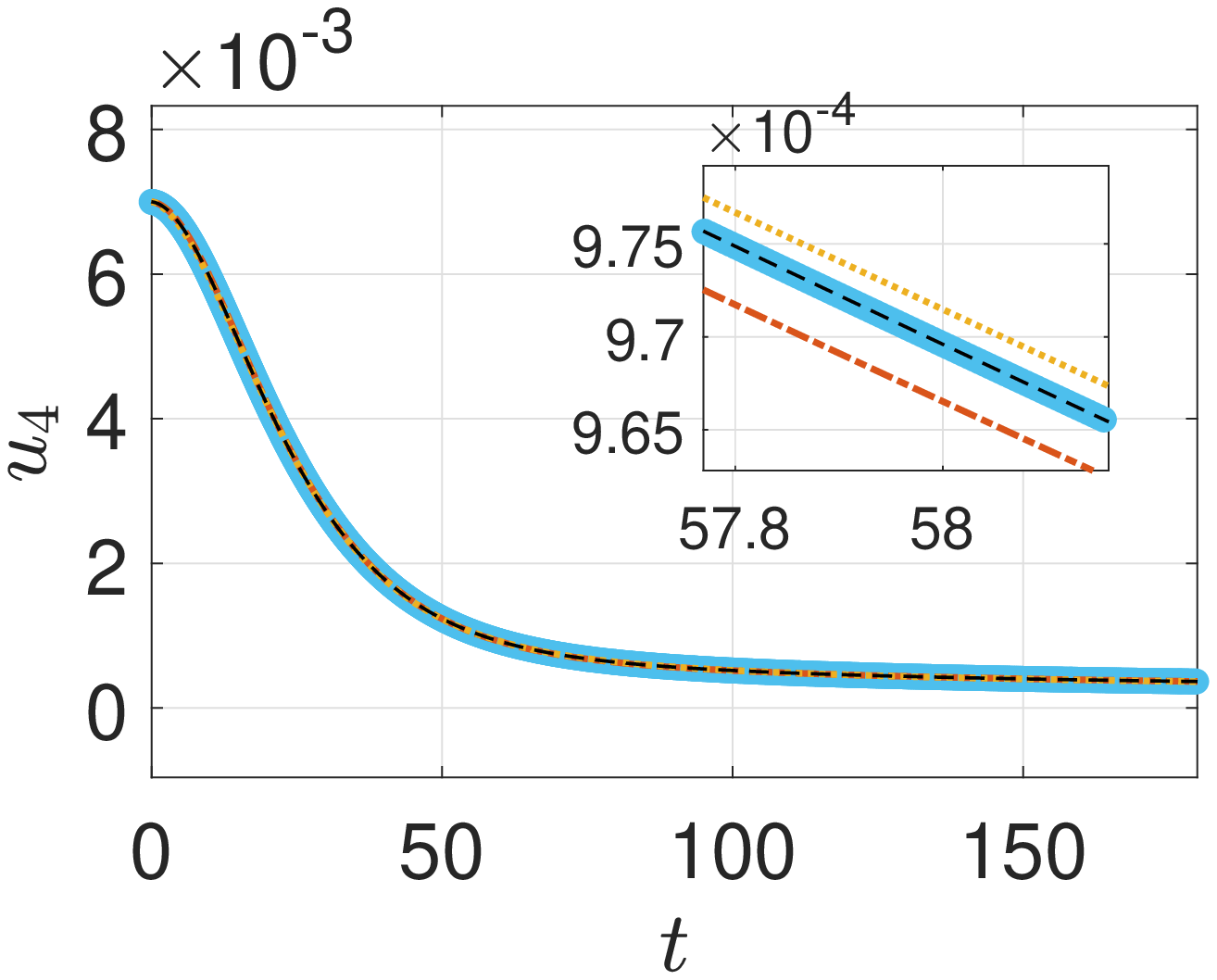}}
    \subfigure[]{\includegraphics[width=0.3\textwidth]{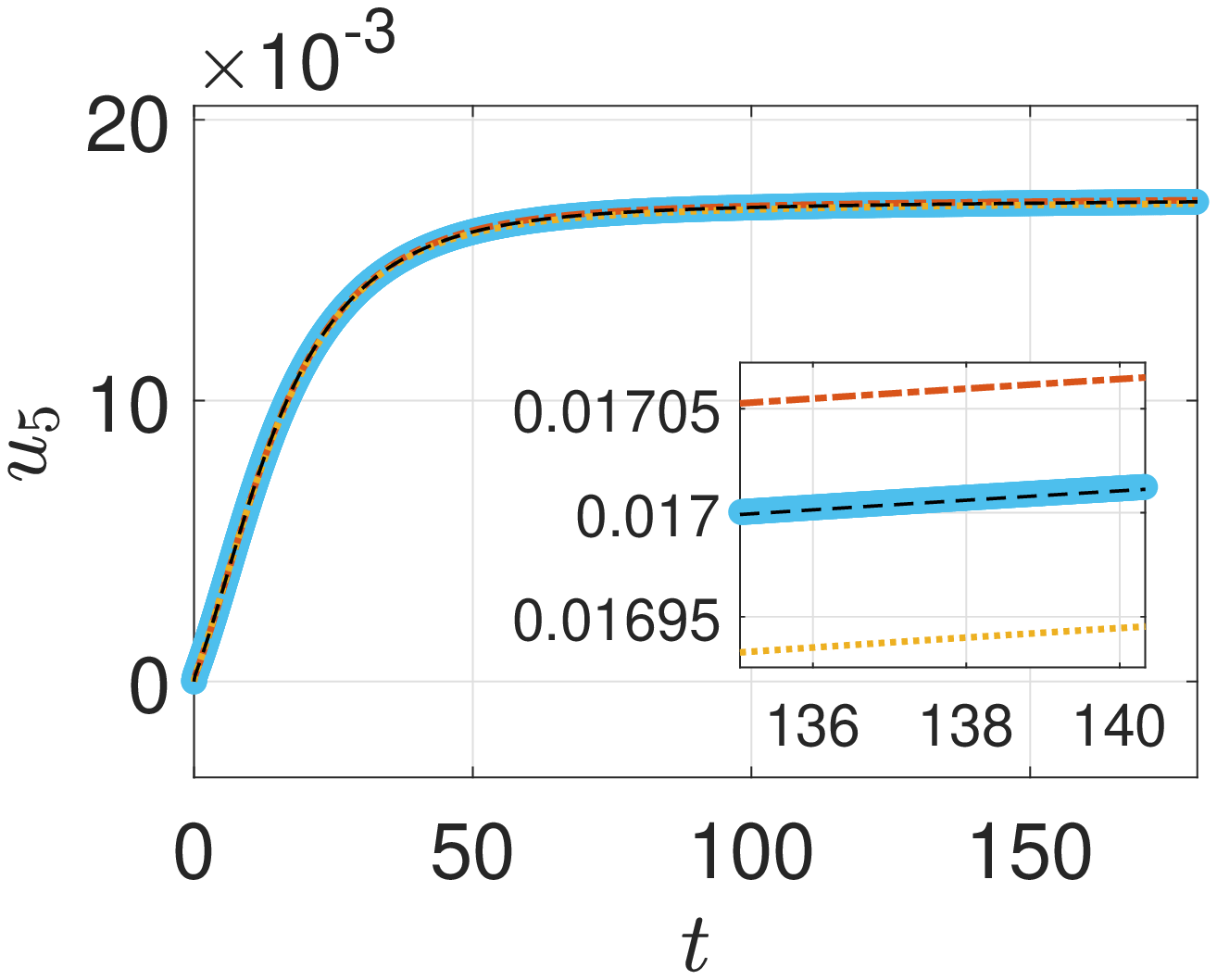}}
    \subfigure[]{\includegraphics[width=0.3\textwidth]{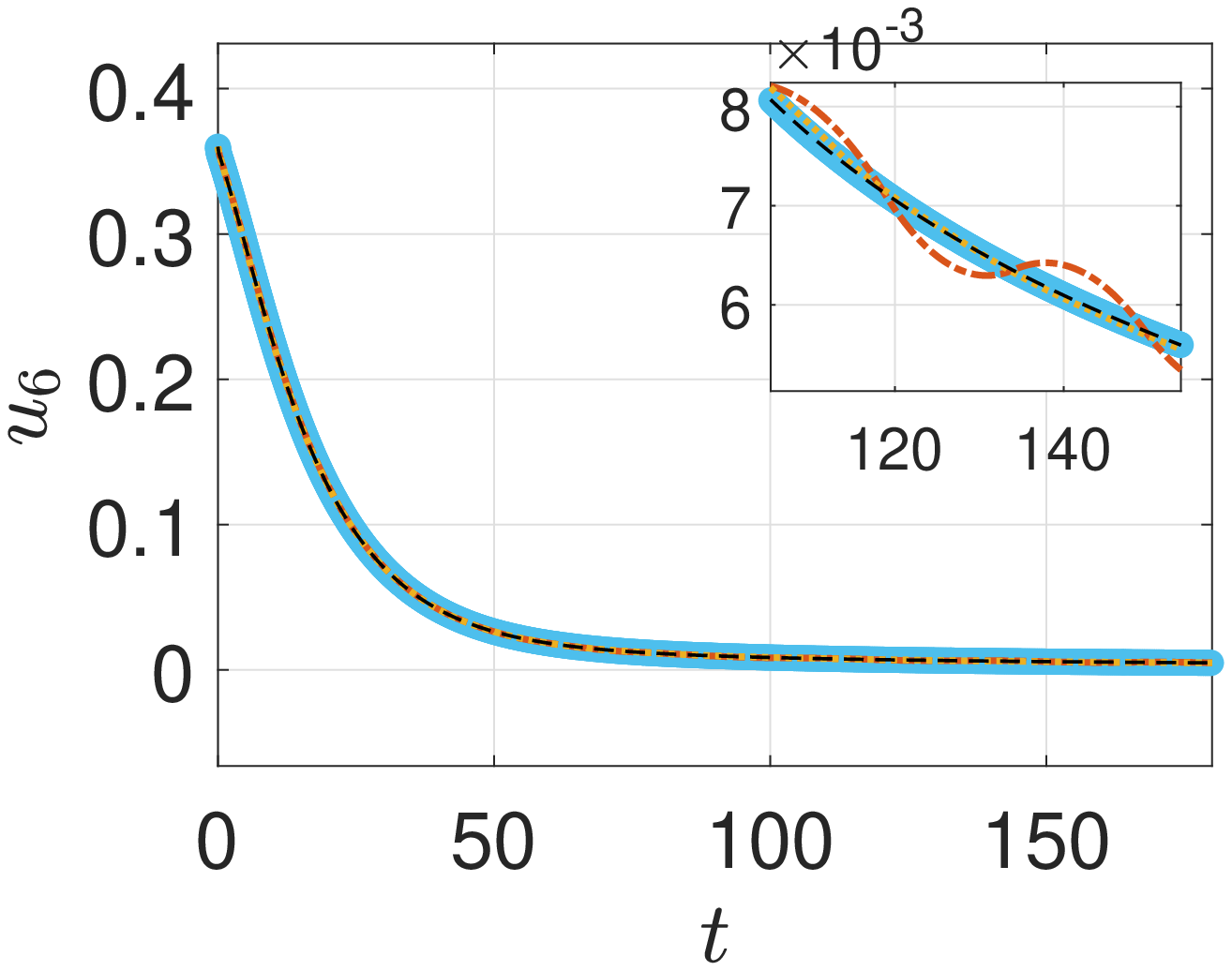}}
    \caption{The chemical Akzo Nobel DAE problem \eqref{eq:chemakzo}. Approximate solutions computed in the interval $[0 \quad 180]$ with both absolute and relative tolerances set to~1e$-$03. Insets depict zooms around the reference solution.
    \label{fig:chemakzo_solutions}}
\end{figure}
As it is shown in Figure~\ref{fig:chemakzo_solutions}, for tolerances 1e$-$03, the proposed scheme achieves more accurate solutions than \texttt{ode23t} and \texttt{ode15s}.
\begin{table}[ht]
\begin{center}
\caption{Chemical Akzo Nobel problem \eqref{eq:chemakzo}. Computational times in seconds (median, minimum and maximum over $10$ runs) and Number of points required in the interval $[0, 180]$ by PIRPNN, \texttt{ode23t} and  \texttt{ode15s} with tolerances~1e$-$03 and~1e$-$06. The reference solution was computed by \texttt{ode15s} with tolerances equal to 1e$-$14.}
    {\footnotesize
    \setlength{\tabcolsep}{3pt}
    \begin{tabular}{|l |l l l r |l l l r|}
        \hline
        & \multicolumn{4}{c|}{$tol=$ 1e$-$03} & \multicolumn{4}{c|}{$tol=$ 1e$-$06} \\
        \cline{2-9}
        & median & min & max & \multicolumn{1}{l|}{\# pts} & median & min & max & \multicolumn{1}{l|}{\# pts}\\
        \hline
        \rowcolor{LightCyan}
        PIRPNN & 2.90e$-$02 & 2.61e$-$02 & 4.82e$-$02 &  192  &  2.70e$-$02 & 2.58e$-$02 & 4.21e$-$02 &  204\\
        \texttt{ode23s}  & 2.02e$-$03 & 1.90e$-$03 & 3.21e$-$03 &  31  & 6.28e$-$03 & 5.84e$-$03 & 2.23e$-$02 &  195\\
        \texttt{ode15s} & 3.54e$-$03 & 3.34e$-$03 & 3.73e$-$03 &  34  & 4.41e$-$03 & 4.17e$-$03 & 1.19e$-$02 &  107\\
        reference & 3.85e$-$02 & 3.71e$-$02 & 4.48e$-$02  &  1426 & 3.85e$-$02 & 3.71e$-$02 & 4.48e$-$02 & 3195\\
        \hline
    \end{tabular}
    }
\end{center}
\label{tab:chemakzo_time_points}
\end{table}
In Table~\ref{tab:chemakzo_time_points}, we report the number of points and the computational times required by each method, including the time for computing the reference solution.
As shown, the corresponding total number of points required by the proposed scheme is comparable with the ones required by \texttt{ode23t} and \texttt{15s} and significantly smaller than the number of points required by the reference solution.
Furthermore, the computational times of the proposed method are comparable with the ones required by the \texttt{ode23t} and \texttt{15s}, thus outperforming the ones required for computing the reference solution.

\subsection{Case Study 5: The Belousov–Zhabotinsky stiff ODEs}
The Belousov–Zhabotinsky chemical reactions model \cite{belusov1959periodically,zhabotinsky1964periodical} 
is given by the following system of seven ODEs:
\begin{equation}
    \begin{aligned}
    &\dfrac{dA}{dt}= -k_1AY, \\
    &\dfrac{dY}{dt}= -k_1AY - k_2XY + k_5Z,\\
    &\dfrac{dX}{dt}= k_1AY - k_2XY + k_3BX - 2k_4X^2,\\ &\dfrac{dP}{dt}= k_2XY, \\
    &\dfrac{dB}{dt}= -k_3BX, \\
    &\dfrac{dZ}{dt}= k_3BX - k_5Z, \\ 
    &\dfrac{dQ}{dt}= k_4X^2.
    \end{aligned}
    \label{eq:BZ}
\end{equation}
$A,B,P,Q,X,Y,Z$ are the concentrations of chemical species and $k_1 = 4.72,$ $k_2 = 3 \times 10^9,$ $k_3 = 1.5 \times 10^4,$ $k_4 = 4 \times 10^7,$ $k_5 = 1$ are the reaction coefficients.
The initial conditions are set as $A(0) = B(0) = 0.066,$ $Y(0) = X(0) = P(0) = Q(0) = 0,$ $Z(0) = 0.002$.
This is a very stiff problem, due to the different scales of the reaction coefficients, thus exhibiting very sharp gradients. Indeed, for an acceptable solution is needed a tolerance at least of 1e$-$07 as also reported in \cite{shulyk2008numerical}. Here, we have compared the solution obtained with the proposed PIRPNN, with the stiff solvers \texttt{ode23s} and \texttt{ode15s}.
\begin{table}[ht]
\begin{center}
\caption{The Belousov–Zhabotinsky stiff ODEs problem \eqref{eq:BZ} in the time interval $[0 \quad 40]$. $l^2$, $l^{\infty}$ and mean absolute (MAE) approximation errors obtained with both absolute and relative tolerances set to 1e$-$07 and 1e$-$08.
\label{tab:BZ_accuracy}}
    {\footnotesize
    \begin{tabular}{|l|l |l l l |l l l|}
        \hline
\multicolumn{2}{|c|}{} & \multicolumn{3}{c|}{$tol=$ 1e$-$07} & \multicolumn{3}{c|}{$tol=$ 1e$-$08} \\
\cline{3-8}
        \multicolumn{2}{|c|}{} & $l^2$ & $l^{\infty}$ & MAE & $l^2$ & $l^{\infty}$ & MAE\\
        \hline
        \rowcolor{LightCyan}
        & PIRPNN & 3.23e$-$04 & 5.87e$-$06 & 7.24e$-$07 & 6.92e$-$06 & 1.27e$-$07 & 1.54e$-$08\\
        $A$ & \texttt{ode23s}  & 2.18e$-$02 & 4.40e$-$04 & 4.06e$-$05 & 2.44e$-$04 & 4.67e$-$06 & 6.28e$-$07\\
        & \texttt{ode15s} &  1.30e$-$01 & 1.37e$-$03 & 4.11e$-$04 & 8.29e$-$02 & 1.25e$-$03 & 1.69e$-$04 \\
        \hline
        \rowcolor{LightCyan}
        & PIRPNN & 8.27e$-$04 & 4.09e$-$05 & 1.34e$-$06 & 1.78e$-$05 & 9.05e$-$07 & 2.86e$-$08\\
        $Y$ & \texttt{ode23s}  & 4.22e$-$02 & 1.19e$-$03 & 7.29e$-$05  & 6.69e$-$04 & 3.51e$-$05 & 1.14e$-$06\\
        & \texttt{ode15s} & 1.06e$-$01 & 1.31e$-$03 & 3.28e$-$04 & 5.88e$-$02 & 1.19e$-$03 & 1.12e$-$04 \\
        \hline
        \rowcolor{LightCyan}
        & PIRPNN & 4.77e$-$05 & 1.04e$-$05 & 1.29e$-$08 & 1.94e$-$06 & 9.43e$-$07 & 2.79e$-$10\\
        $X$ & \texttt{ode23s}  & 2.60e$-$04 & 1.23e$-$05 & 1.87e$-$07 & 4.88e$-$05 & 1.16e$-$05 & 1.09e$-$08\\
        & \texttt{ode15s} & 3.21e$-$04 & 1.23e$-$05 & 2.71e$-$07 & 2.26e$-$04 & 1.23e$-$05 & 1.46e$-$07 \\
        \hline
        \rowcolor{LightCyan}
        & PIRPNN & 7.08e$-$04 & 4.15e$-$05 & 1.08e$-$06 & 1.53e$-$05 & 9.05e$-$07 & 2.32e$-$08\\
        $P$ & \texttt{ode23s}  & 3.27e$-$02 & 8.16e$-$04 & 6.07e$-$05 & 5.54e$-$04 & 3.48e$-$05 & 9.69e$-$07 \\
        & \texttt{ode15s} &  1.98e$-$01 & 2.03e$-$03 & 6.49e$-$04 & 1.35e$-$01 & 1.87e$-$03 & 2.95e$-$04 \\
        \hline
        \rowcolor{LightCyan}
        & PIRPNN & 2.99e$-$03 & 1.76e$-$04 & 1.83e$-$06 &  6.48e$-$05 & 3.86e$-$06 & 3.93e$-$08\\
        $B$ & \texttt{ode23s}  & 1.09e$-$01 & 3.17e$-$03 & 1.09e$-$04 & 2.41e$-$03 & 1.49e$-$04 & 2.04e$-$06\\
        & \texttt{ode15s} & 4.05e$-$01 & 3.52e$-$03 & 1.25e$-$03 & 2.85e$-$01 & 3.18e$-$03 & 6.49e$-$04 \\
        \hline
        \rowcolor{LightCyan}
        & PIRPNN & 2.73e$-$03 & 1.73e$-$04 & 2.93e$-$06 &  5.91e$-$05 & 3.84e$-$06 & 6.27e$-$08\\
        $Z$ & \texttt{ode23s}  & 7.85e$-$02 & 2.56e$-$03 & 1.12e$-$04 & 2.21e$-$03 & 1.48e$-$04 & 2.46e$-$06\\
        & \texttt{ode15s} & 1.24e$-$01 & 2.84e$-$03 & 2.55e$-$04 & 6.88e$-$02 & 2.56e$-$03 & 9.36e$-$05 \\
        \hline
        \rowcolor{LightCyan}
        & PIRPNN & 1.26e$-$03 & 8.64e$-$05 & 7.36e$-$07  & 2.73e$-$05 & 1.92e$-$06 & 1.58e$-$08 \\
        $Q$ & \texttt{ode23s}  & 4.38e$-$02 & 1.28e$-$03 & 4.34e$-$05 & 1.01e$-$03 & 7.36e$-$05 & 7.90e$-$07\\
        & \texttt{ode15s} & 1.64e$-$01 & 1.42e$-$03 & 5.03e$-$04 & 1.15e$-$01 & 1.28e$-$03 & 2.62e$-$04 \\
        \hline
    \end{tabular}
    }
\end{center}
\end{table}
Table~\ref{tab:BZ_accuracy} summarizes the $l^2$, $l^{\infty}$ and mean absolute (MAE) approximation errors with respect to the reference solution in $40,000$ equally spaced grid points in the interval $[0 \quad 40]$. As shown, for the given tolerances, the proposed method outperforms \texttt{ode15s} and \texttt{ode23s} in all metrics.
\begin{figure}
    \centering
    \subfigure[]{\includegraphics[width=0.3 \textwidth]{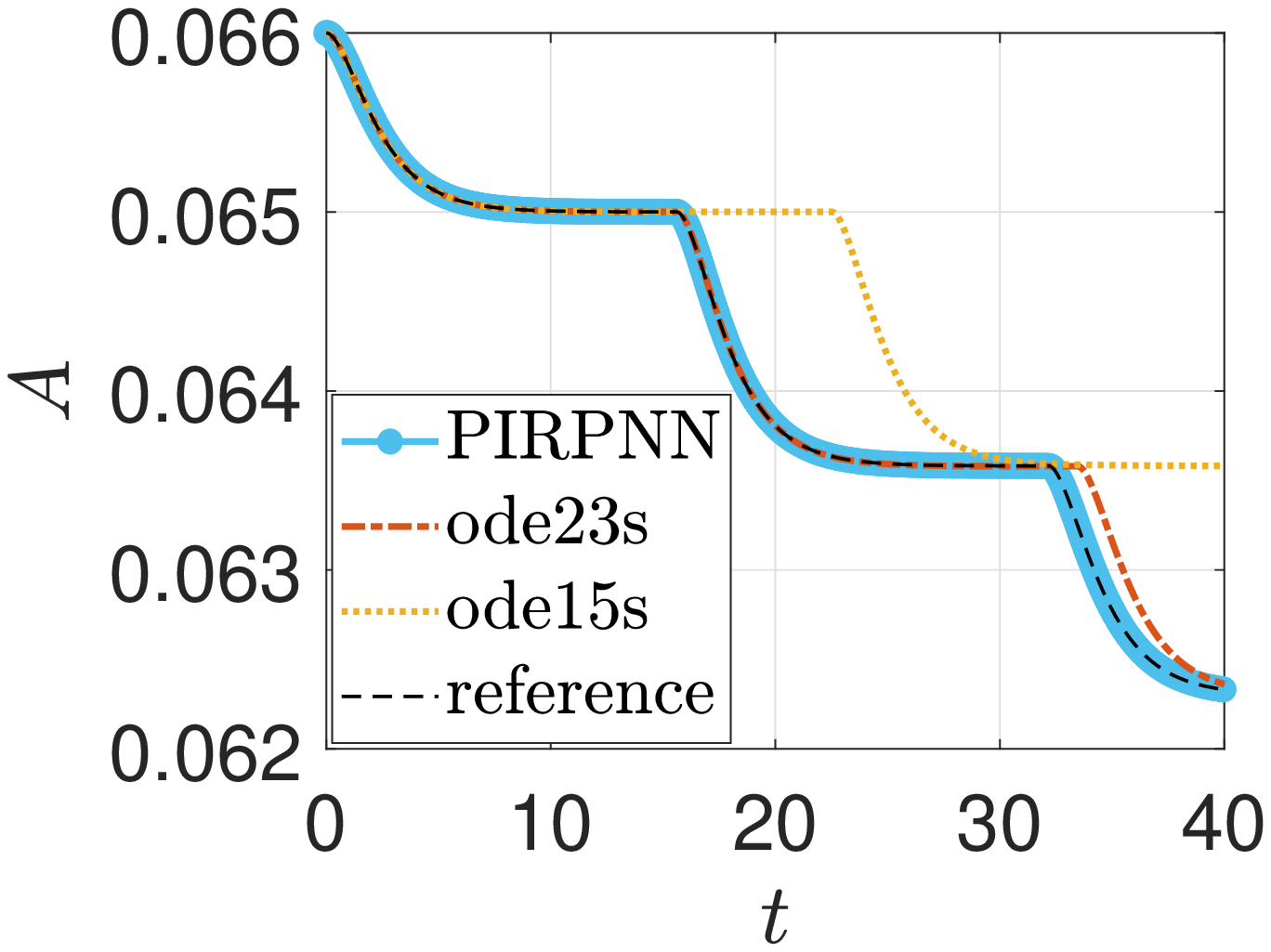}}
    \subfigure[]{\includegraphics[width=0.3 \textwidth]{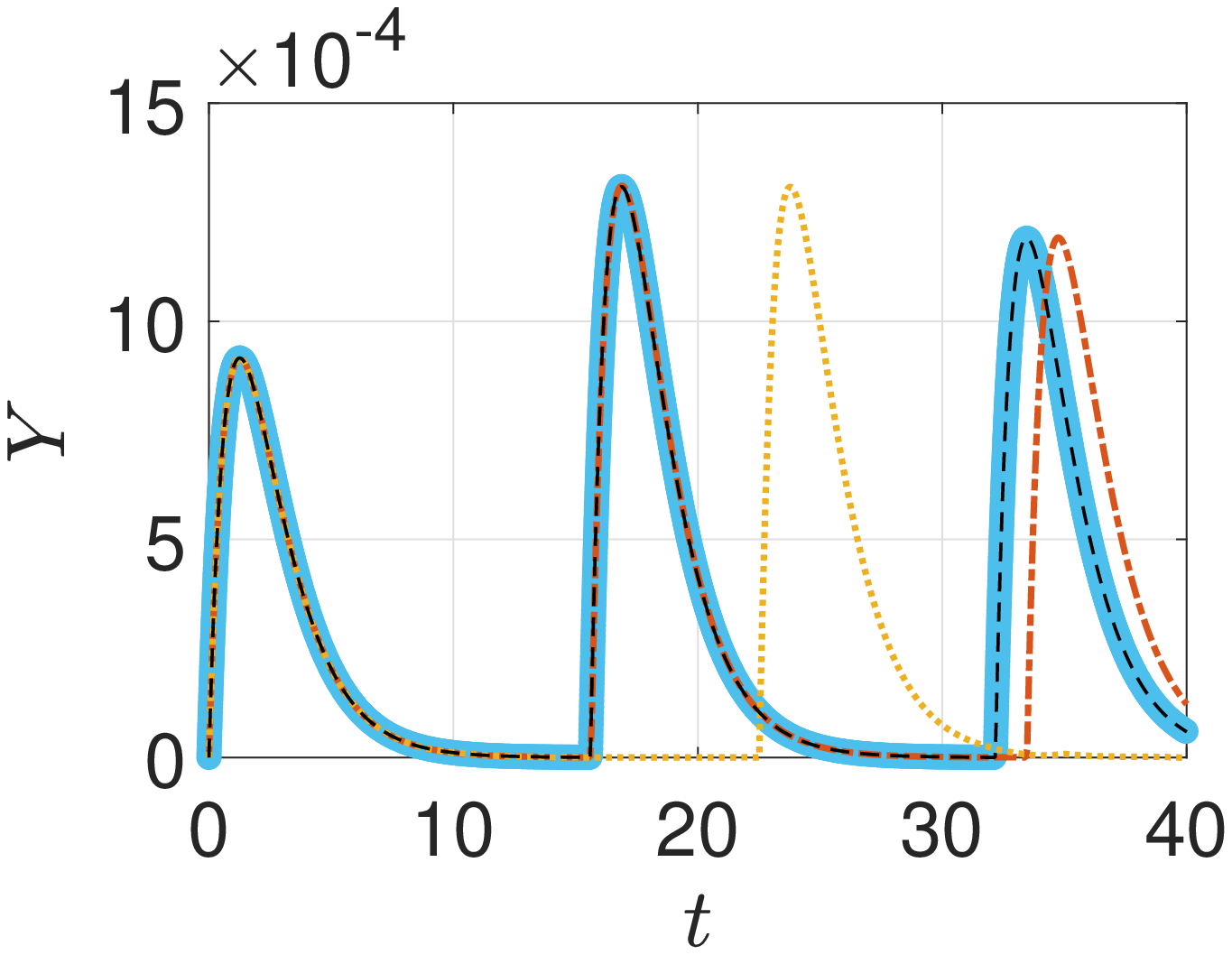}}
    \subfigure[]{\includegraphics[width=0.3 \textwidth]{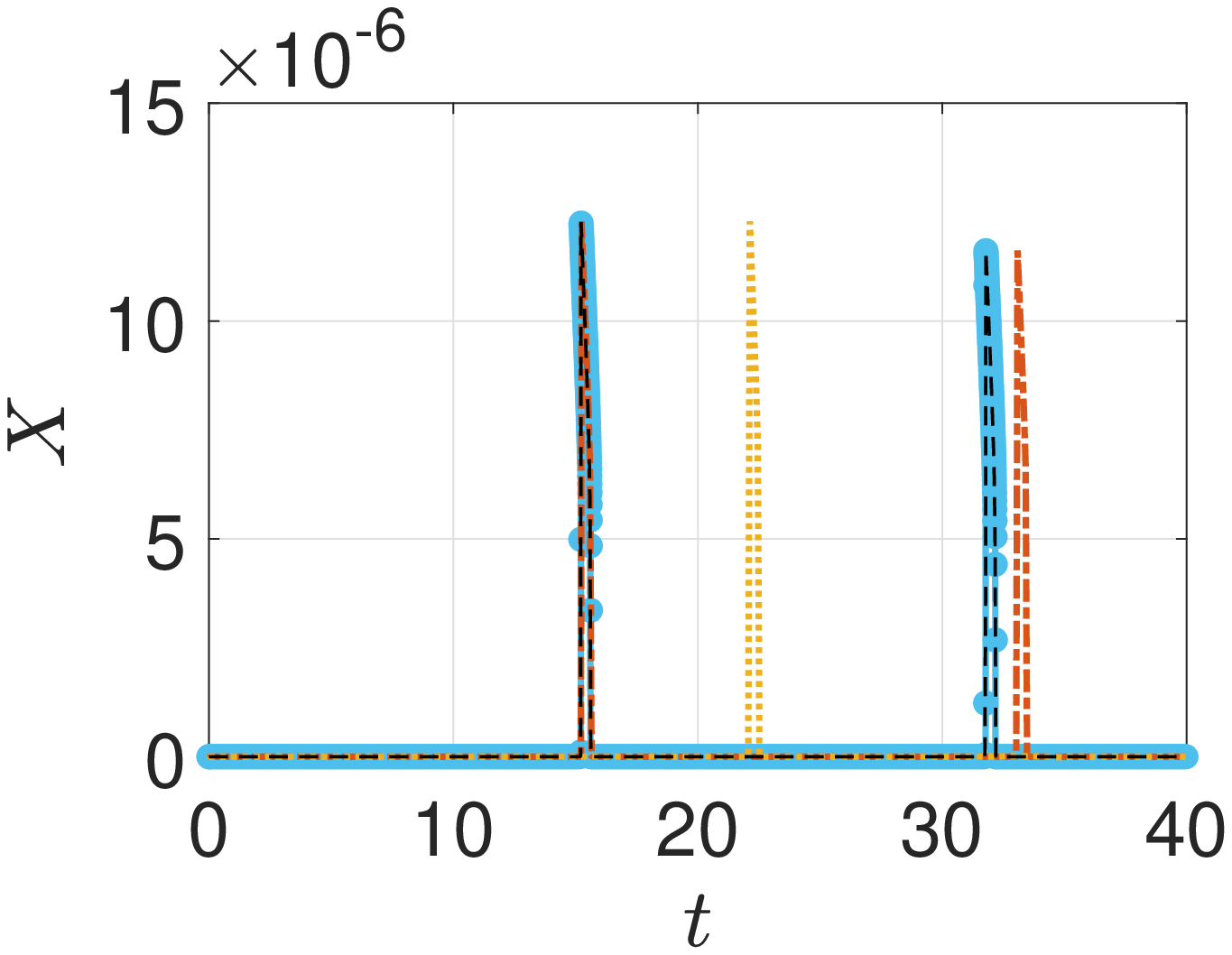}}
    \subfigure[]{\includegraphics[width=0.3 \textwidth]{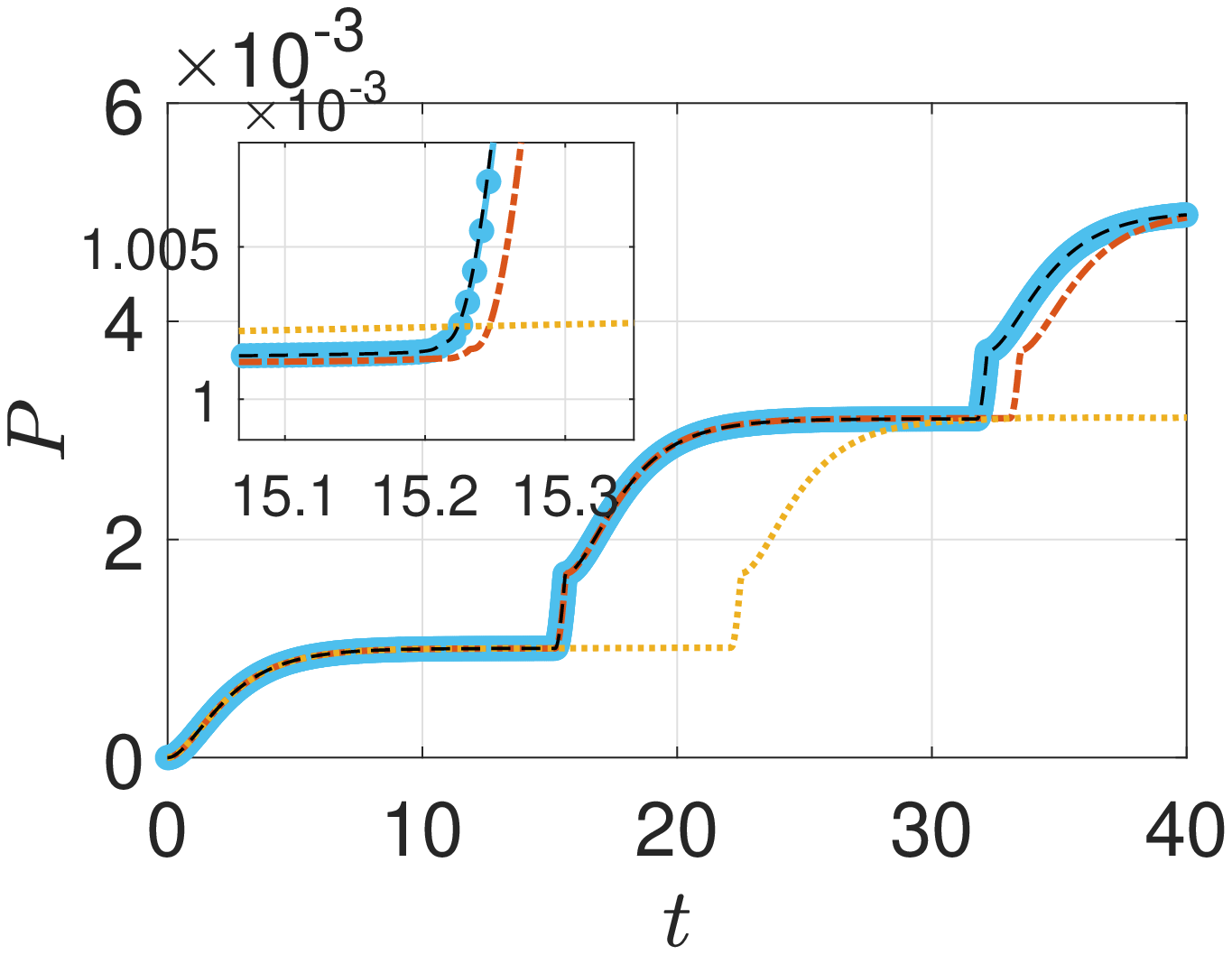}}
    \subfigure[]{\includegraphics[width=0.3 \textwidth]{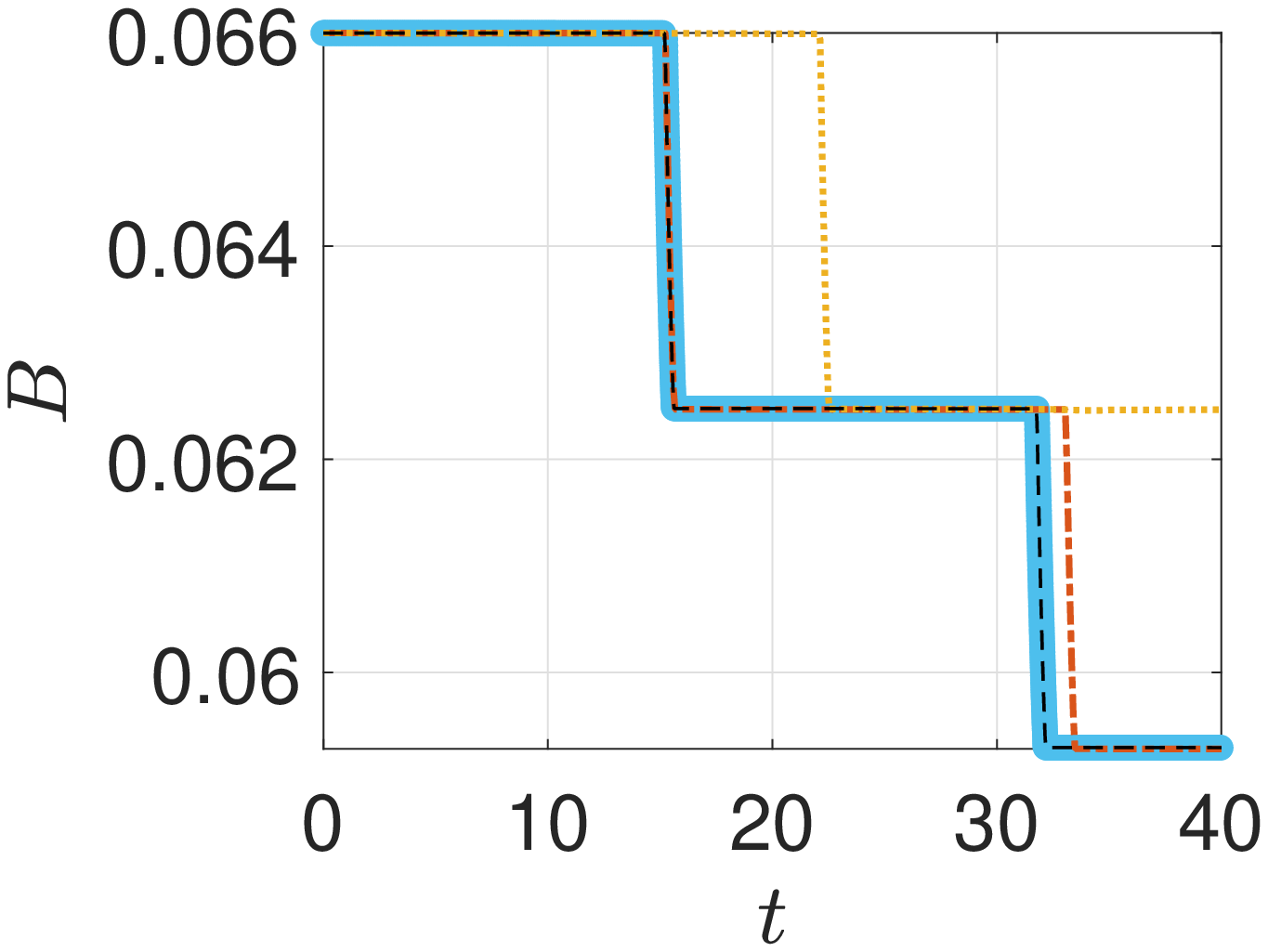}}
    \subfigure[]{\includegraphics[width=0.3 \textwidth]{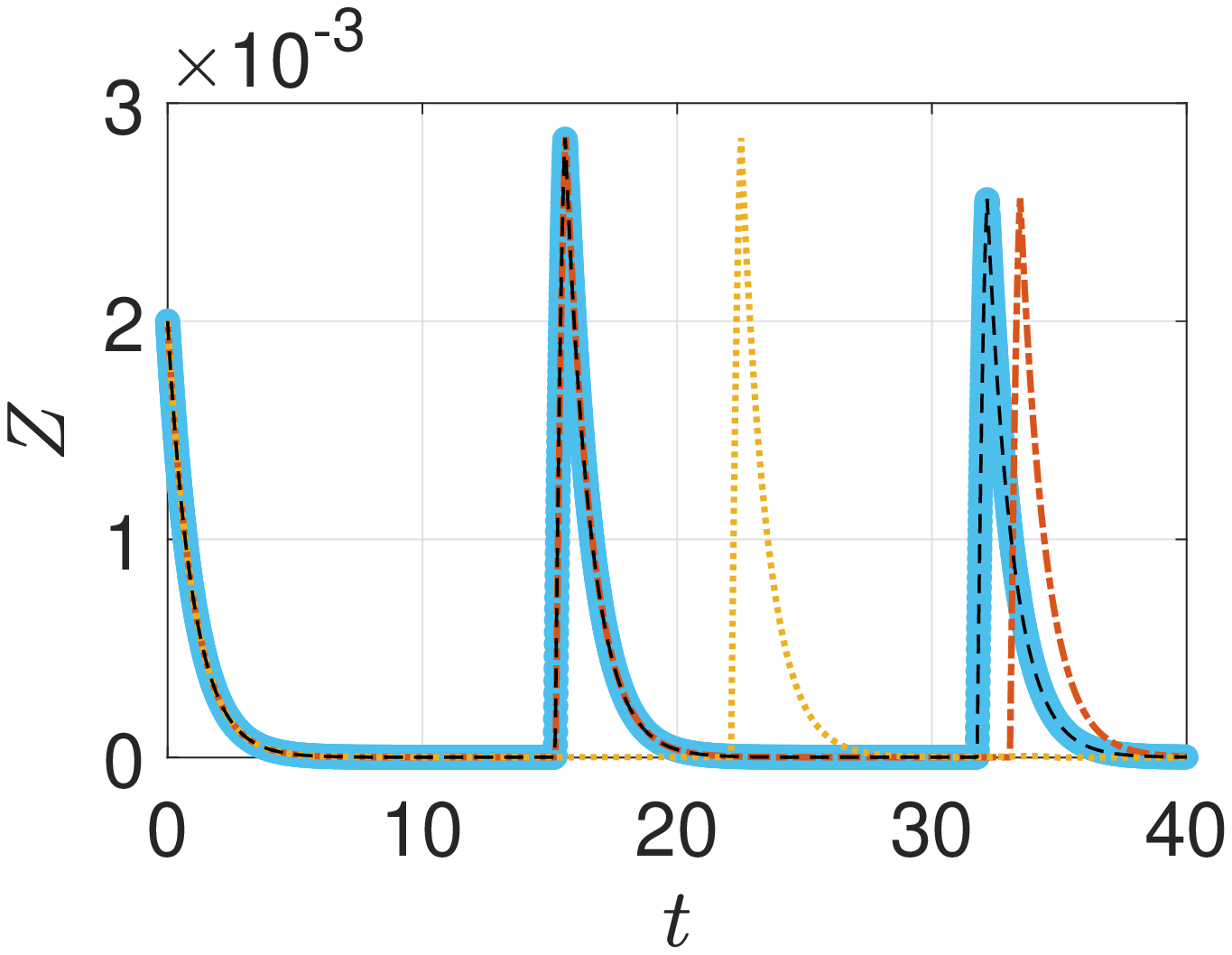}}
    \caption{The Belousov–Zhabotinsky stiff ODEs problem \eqref{eq:BZ} in the time interval $[0 \quad 40]$. Approximate solutions of 6 chemical species (out of 7) computed by the PIRPNN, ode23s, ode15s with tolerances set to~1e$-$07. 
    \label{fig:BZ_solutions}}
\end{figure}
As it is shown in Figure~\ref{fig:BZ_solutions}, for tolerances 1e$-07$, the proposed scheme achieves more accurate solutions than \texttt{ode23s} and \texttt{ode15s}. Actually, as shown, for the given tolerances, both \texttt{ode23s} and \texttt{ode15s} fail to converge resulting in time shifted jumps, while also \texttt{ode15s} fails to approximate the solution up to the final time.
\begin{table}[ht]
\begin{center}
\caption{Belousov-Zhabotinsky stiff ODEs problem \eqref{eq:BZ}. Computational times (s) (median, minimum and maximum over $10$ runs) and number of points required in the interval $[0 \quad 40]$ by the PIRPNN, \texttt{ode23s} and \texttt{ode15s} with both absolute and relative tolerances~1e$-$07 and~1e$-$08.}
    {\footnotesize
    \setlength{\tabcolsep}{3pt}
    \begin{tabular}{|l |l l l r |l l l r|}
        \hline
        & \multicolumn{4}{c|}{$tol=$ 1e$-$07} & \multicolumn{4}{c|}{$tol=$ 1e$-$08} \\
        \cline{2-9}
        & median & min & max & \multicolumn{1}{l|}{\# pts} & median & min & max & \multicolumn{1}{l|}{\# pts}\\
        \hline
        \rowcolor{LightCyan}
        PIRPNN & 4.62e$-$01 & 4.13e$-$01 & 5.88e$-$01 &  1624  & 5.54e$-$01 & 4.98e$-$01 & 6.19e$-$01 &  2056\\
        \texttt{ode23s}  & 1.04e$-$02 & 9.96e$-$03 & 1.57e$-$02 &  255  & 1.83e$-$02 &  1.77e$-$02  & 3.36e$-$02 &  480\\
        \texttt{ode15s} & 9.72e$-$03 & 9.44e$-$03 & 1.82e$-$02 &  251  & 1.37e$-$02 & 1.33e$-$02  & 2.52e$-$02 &  362\\
        reference & 8.79e$-$02 & 8.63e$-$02 & 1.59e$-$01  &  3195 & 8.79e$-$02  & 8.63e$-$02  & 1.59e$-$01 & 3195\\
        \hline
    \end{tabular}
    }
\end{center}
\label{tab:BZ_time_points}
\end{table}
In Table~\ref{tab:BZ_time_points}, we report the computational times and number of points required by each method, including the ones required for computing the reference solution. As shown, the corresponding total number of points required by the proposed scheme is larger than the ones required by \texttt{ode23t} and \texttt{15s} (which however fail), but yet significantly less than the number of points required by the reference solution. Besides, the computational times required by the proposed method are for any practical purposes comparable with the ones required for computing the reference solution.

\subsection{Case Study 6: The Allen-Cahn phase-field PDE}
The Allen-Cahn equation is a famous reaction-diffusion PDE that was proposed in \cite{allen1979microscopic} as a phase-field model for describing the dynamics of the mean curvature flow. Here, for our illustrations, we considered a one-dimensional formulation given by \cite{trefethen2000spectral}:
\begin{equation}
\begin{aligned}
& \dfrac{\partial u}{\partial t}=\nu\dfrac{\partial^2 u}{\partial x^2}+u-u^3, \qquad x \in [-1 \quad 1], \qquad
& u(-1,t)=-1, \quad u(1,t)=1,
\end{aligned}
\label{eq:Allen_Cahn}
\end{equation}
with initial condition $u(x,0) = 0.53 \, x + 0.47 \, \sin(-1.5\, \pi \, x)$.
For $\nu=0.01$, the solution is stiff \cite{trefethen2000spectral}, thus exhibiting a metastable behavior with an initial two-hill configuration that disappears close to $t=40$ with a fast transition to a one-hill stable solution. Here, we integrate this until $t=70$.
To solve the Allen-Cahn PDE, we used an equally spaced grid using (as in \cite{trefethen2000spectral}) $102$ points $x_0,x_1,\dots,x_{100},x_{101}$ and second order centered finite differences. Hence, \eqref{eq:Allen_Cahn} becomes a system of $100$ ODEs in $u_i(t)=u(x_i,t),$ $i=1,\dots,100$:
\begin{equation}
\begin{aligned}
& \dfrac{\partial u_i}{\partial t}=\nu\dfrac{(u_{i+1}-2u_i+u_{i-1})}{dx^2}+u_i-u_i^3, \quad
& u_0=-1, \quad u_{101}=1.
\end{aligned}
\label{eq:Allen_Cahn_FD}
\end{equation}
Here, for our computations, we have used a sparse QR decomposition as implemented in the SuiteSparseQR \cite{davis2009user,davis2011algorithm}.
\begin{table}[ht]
\begin{center}
\caption{Allen-Cahn phase-field PDE \eqref{eq:Allen_Cahn_FD} with $\nu =0.01$ in $[0 \quad 70]$. $l^{2}$, $l^{\infty}$ and mean absolute approximation (MAE) errors obtained with both absolute and relative tolerances set to 1e$-$03 and 1e$-$06.}
{\footnotesize
\begin{tabular}{|l|lll|lll|}
\hline
& \multicolumn{3}{c|}{$tol=$ 1e$-$03} & \multicolumn{3}{c|}{$tol=$ 1e$-$06} \\
\cline{2-7}
& $l^2$ & $l^{\infty}$ & MAE & $l^2$ & $l^{\infty}$ & MAE\\
\hline
\rowcolor{LightCyan}
PIRPNN & 6.36e$-$03 & 8.01e$-$05 & 2.19e$-$06 & 2.07e$-$05 & 1.43e$-$07 & 8.36e$-$09\\
\texttt{ode23s}  & 8.98e$-$01 & 1.12e$-$02 & 3.15e$-$04 & 2.46e$-$03 & 2.69e$-$05 & 1.16e$-$06\\
\texttt{ode15s} & 3.74e$+$00 & 4.50e$-$02 & 1.17e$-$03 & 1.84e$-$03 & 2.33e$-$05 & 6.23e$-$07 \\
\hline
\end{tabular}
}
\end{center}
\label{tab:AC_accuracy}
\end{table}
Table~\ref{tab:AC_accuracy} summarizes the $l^2$, $l^{\infty}$ and mean absolute (MAE) approximation errors with respect to the reference solution in $7\,000 \times 102$ equally spaced grid points in the time interval $[0 \quad 70]$ and in the space interval $[-1 \quad 1]$, respectively. As shown, for the given tolerances, the proposed method outperforms \texttt{ode15s} and \texttt{ode23s} in all metrics.
\begin{figure}
    \centering
    \subfigure[]{
    \includegraphics[width=0.45 \textwidth]{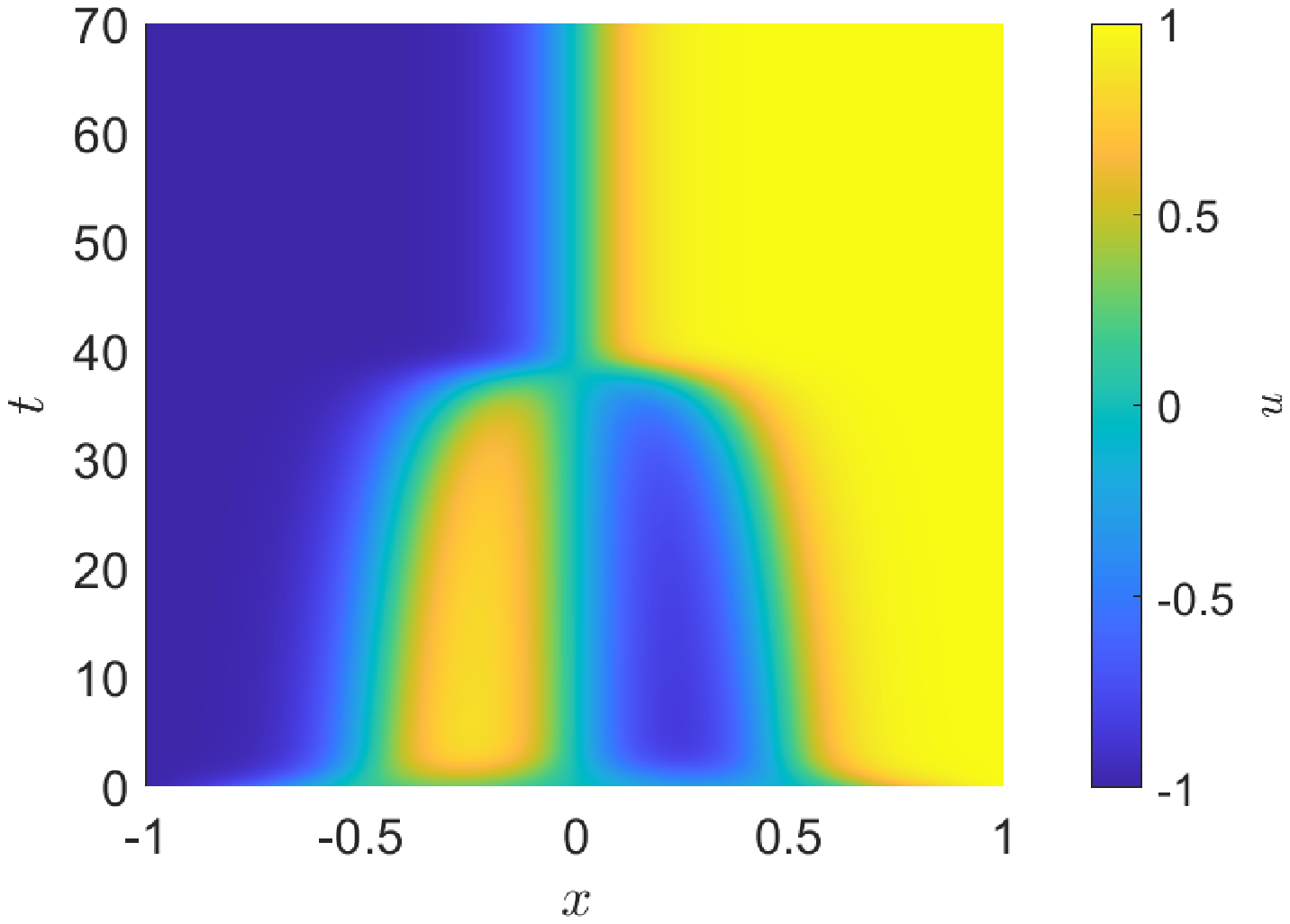}
    }
    \subfigure[]{
    \includegraphics[width=0.45 \textwidth]{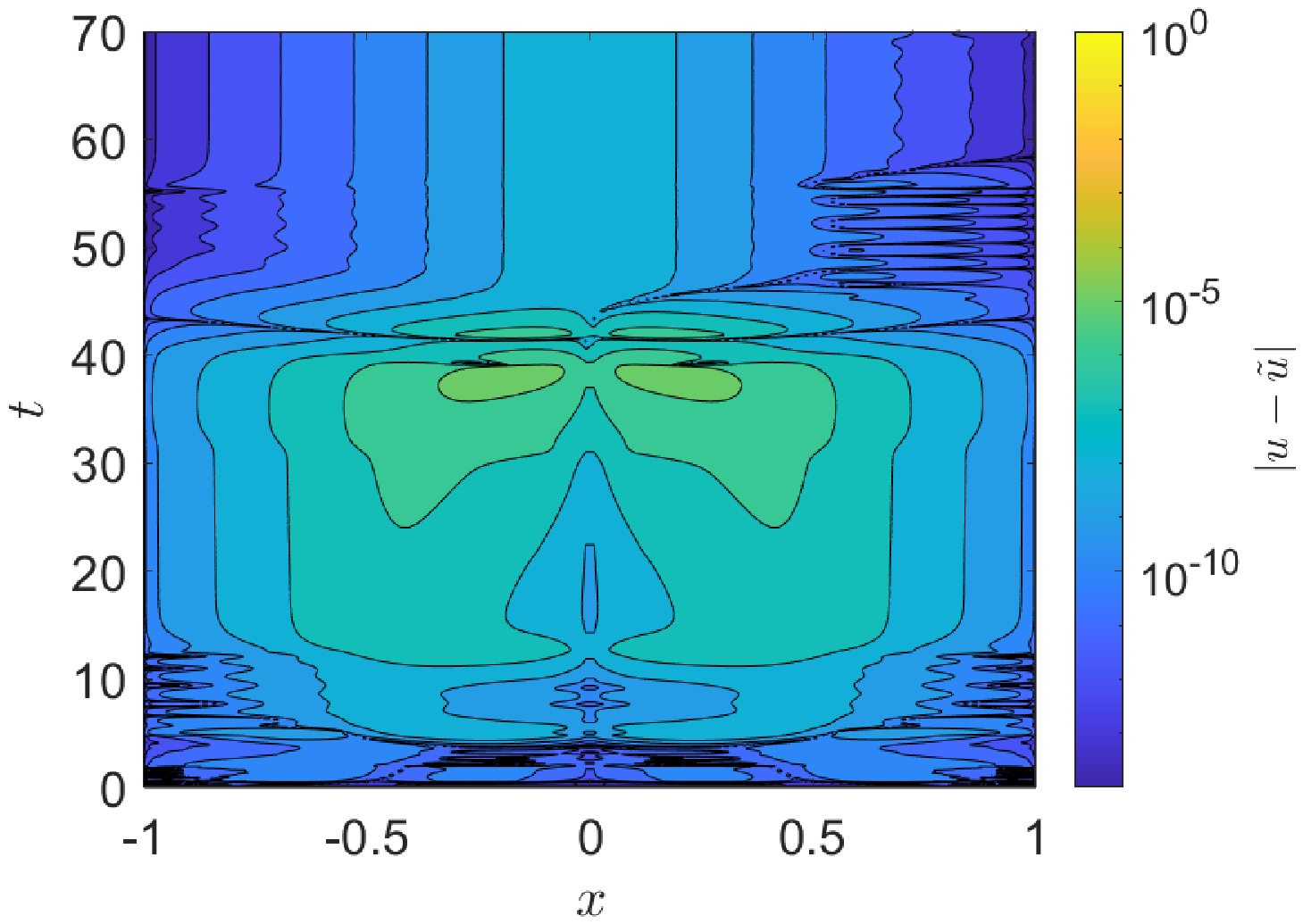}
    }
    \subfigure[]{
    \includegraphics[width=0.45 \textwidth]{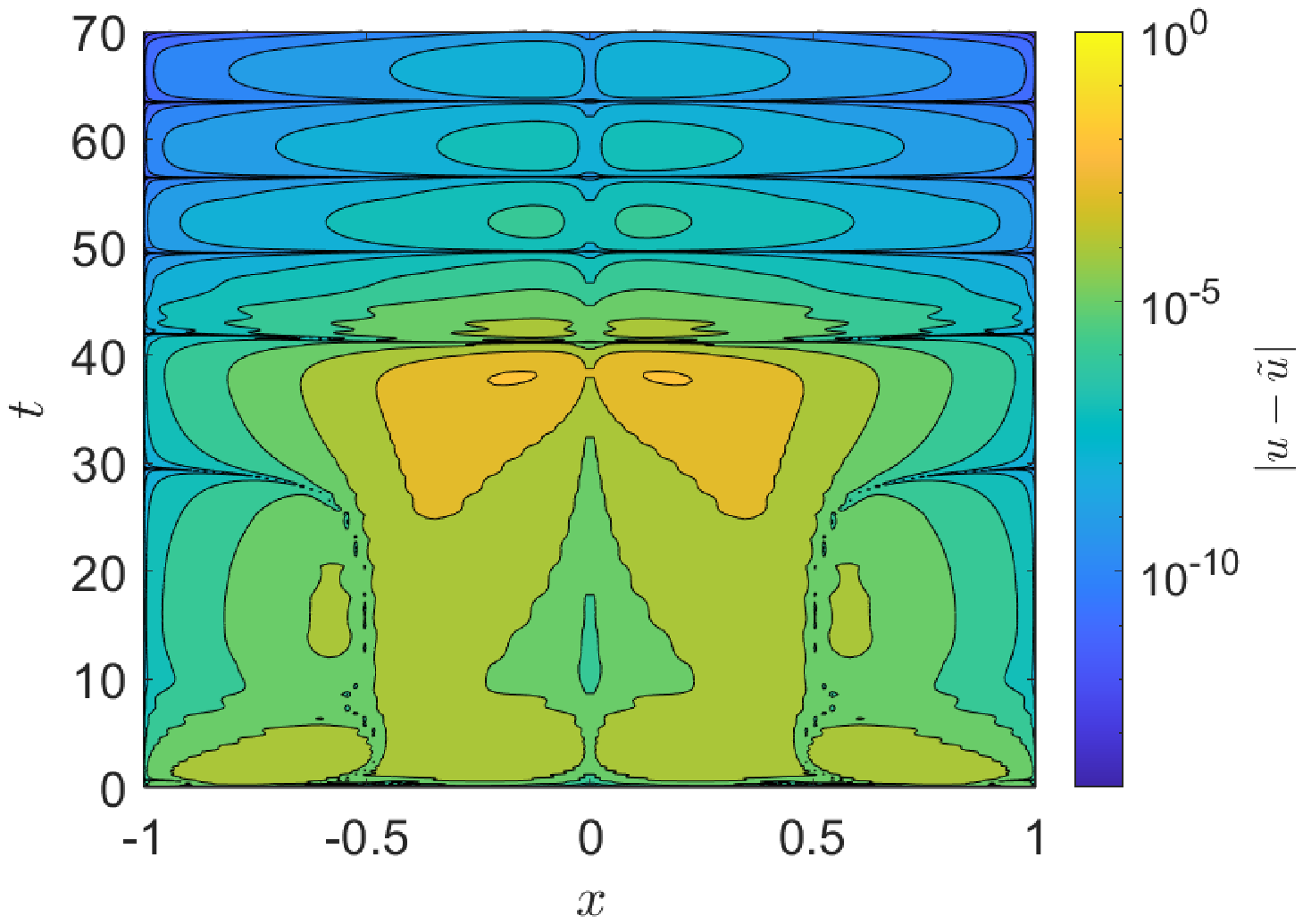}
    }
    \subfigure[]{
    \includegraphics[width=0.45 \textwidth]{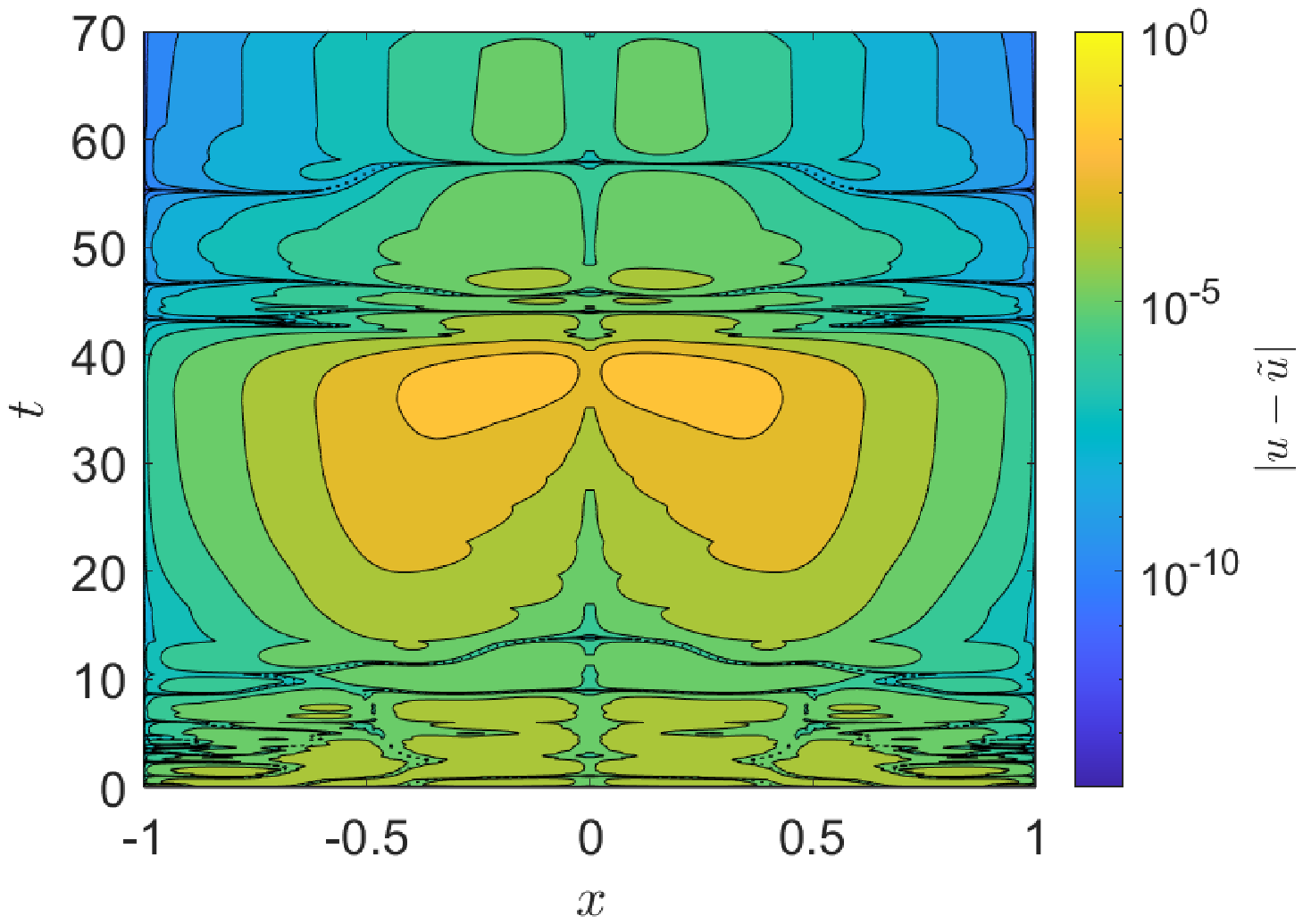}
    }
    \caption{Allen-Cahn phase-field PDE  discretized with FD \eqref{eq:Allen_Cahn_FD} with $nu=0.01$ in the time interval $[0 \quad 70]$. Contour plots of absolute point errors computed with tolerance 1e$-$03: a) reference solution computed with \texttt{ode15s} with tol=1e$-$14, b) PIRPNN absolute errors c) \texttt{ode23s} absolute errors d) \texttt{ode15s} absolute errors. \label{fig:AC_abs_err}
    }
\end{figure}
As it is shown in Figure~\ref{fig:AC_abs_err}, for tolerances 1e$-$03, the proposed scheme achieves more accurate solutions than \texttt{ode23s} and \texttt{ode15s}.
\begin{table}[ht]
\begin{center}
\caption{Allen-Cahn PDE phase-field PDE with $\nu=0.01$ in the time interval $[0 \quad 70]$. Computational times (s) (median, minimum and maximum over $10$ runs) and number of points required by PIRPNN, \texttt{ode23s} and \texttt{ode15s} with both absolute and relative tolerances set to 1e$-$03 and 1e$-$06. We note that most of the computational time for computing the solution with PIRPNN as shown, is due to the time required for the construction of the Jacobian matrix.}
    {\footnotesize
    \setlength{\tabcolsep}{3pt}
    \begin{tabular}{|l |l l l r |l l l r|}
        \hline
        & \multicolumn{4}{c|}{$tol=$ 1e$-$03} & \multicolumn{4}{c|}{$tol=$ 1e$-$06} \\
        \cline{2-9}
        & median & min & max & \multicolumn{1}{l|}{\# pts} & median & min & max & \multicolumn{1}{l|}{\# pts}\\
        \hline
        \rowcolor{LightCyan}
        PIRPNN & 5.49e$+$00 & 5.19e$+$00 & 5.68e$+$00 &  330  & 6.37e$+$00 & 5.80e$+$00 & 7.53e$+$00 &  380\\
        \rowcolor{LightRed}
        PIRPNN jac & 4.41e$+$00 & 4.38e$+$00 & 4.45e$+$00 &    & 5.05e$+$00 & 4.44e$+$00 & 5.35e$+$00 &  \\
        \texttt{ode23s}  & 1.03e$-$02 & 8.94e$-$03 & 1.88e$-$02 &  54  & 8.64e$-$02 & 8.35e$-$02 & 9.15e$-$02 &  529\\
        \texttt{ode15s} & 8.46e$-$03 & 7.87e$-$03 & 2.39e$-$02 &  68  & 1.67e$-$02 & 1.62e$-$02 & 2.32e$-$02 &  223\\
        reference & 2.35e$-$01 & 2.32e$-$01 & 2.56e$-$01 &  3706 & 2.41e$-$01 & 2.35e$-$01 & 2.76e$-$01 &  3706\\
        \hline
    \end{tabular}
    }
\end{center}
\label{tab:AC_time_points}
\end{table}
In Table~\ref{tab:AC_time_points}, we report computational times and number of points required by each method, including the ones required for computing the reference solution. As shown, the corresponding total number of points required by the proposed scheme is comparable with the ones required by \texttt{ode23s} and \texttt{ode15s} and significantly less than the number of points required by the reference solution.
On the other hand, the computing times of the proposed method are significantly larger than the ones required by \texttt{ode23s} and \texttt{ode15s} (Table \ref{tab:AC_time_points}) and also with the ones required by the reference solution. As reported, the higher computational cost is due to the time required for the construction of the Jacobian matrix, which even if it is sparse has a complex structure that make difficult to assemble it in an efficient way. This task is beyond the scope of this paper. However, in a subsequent work, we aim at implementing matrix-free methods in the Krylov subspace \cite{brown1994using,kelley1999iterative} such as Newton-GMRES for the solution of such large-scale problems.

\subsection{Case Study 7: The Kuramoto-Sivashinsky PDE}
The Kuramoto--Sivanshinsky (KS) \cite{sivashinsky1977nonlinear,trefethen2000spectral} equation is a celebrated fourth-order nonlinear PDE which exhibits deterministic chaos.
Here, we consider a one-dimensional formulation as proposed in \cite{trefethen2000spectral}:
\begin{equation}
\begin{aligned}
    &\frac{\partial u}{\partial t}=-uu_x-u_{xx}-u_{xxxx}, \qquad x \in [0,32\pi]
\end{aligned}
\label{eq:Kuramoto_Sivanshinsky}
\end{equation}
with periodic boundary condition and initial condition given by:
\begin{equation}
    u(x,0)=cos\biggl(\frac{1}{16}x\biggr)\biggl(1+sin(\frac{1}{16}x\biggr).
\end{equation}
The integration time is $[0 \quad 100]$. To solve the KS PDE, we have used an equispaced grid in space using 201 points $x_1,x_2,\dots,x_{200},x_{201}$, i.e., with a space step $dx=32\pi/201$, and second order central finite difference, so that \eqref{eq:Kuramoto_Sivanshinsky} became a system of 200 ODEs in the variables $u_i(t)=u(x_i,t)$:
\begin{equation}
\begin{aligned}
    &\frac{\partial u_i}{\partial t}=-u_i\frac{u_{i+1}-u_{i-1}}{2dx}-\frac{u_{i+1}-2u_{i}+u_{i-1}}{dx^2}-\frac{u_{i+2}-4u_{i+1}+6u_{i}-4u_{i-1}+u_{i-2}}{dx^4}\\
    &i=1,\dots,200.
\end{aligned}
\label{eq:Kuramoto_Sivanshinsky_FD}
\end{equation}
Here, as for the Allen-Cahn PDE, dealing with a resulting high dimensional Jacobian of size $4000\times4000$, we have used a sparse QR decomposition as implemented in the SuiteSparseQR \cite{davis2009user,davis2011algorithm}.
\begin{table}[ht]
\begin{center}
\caption{Kuramoto-Shivasinsky PDE discretized with central FD \eqref{eq:Kuramoto_Sivanshinsky_FD} in the time interval $[0 \quad 100]$. Absolute errors ($l^{2}$-norm, $l^{\infty}$-norm and MAE) for the solutions computed with both absolute and relative tolerances set to 1e$-$03 and 1e$-$06. The reference solution was computed with \texttt{ode15s} with tolerances equal to 1e$-$14.}
{\footnotesize
\begin{tabular}{|l|lll|lll|}
\hline
& \multicolumn{3}{c|}{$tol=$ 1e$-$03} & \multicolumn{3}{c|}{$tol=$ 1e$-$06} \\
\cline{2-7}
& $l^2$ & $l^{\infty}$ & MAE & $l^2$ & $l^{\infty}$ & MAE\\
\hline
\rowcolor{LightCyan}
RPNN   & 5.87e$+$01 & 5.77e$+$03 & 1.09e$+$04  & 8.20e$-$04 & 8.90e$-$06 & 1.29e$-$07\\
\texttt{ode23s}  & 9.40e$+$02 & 5.20e$+$00 & 2.40e$+$01 & 1.36e$+$01 & 1.39e$-$01 & 2.61e$-$03\\
\texttt{ode15s} & 7.10e$+$01 & 4.87e$+$01 & 2.30e$+$02 & 2.38e$-$01 & 1.88e$-$03 & 7.69e$-$05 \\
\hline
\end{tabular}
}
\end{center}
\label{tab:KS_accuracy}
\end{table}
Table~\ref{tab:KS_accuracy} summarizes the approximation errors, in terms of $l^2$-norm and $l^{\infty}$-norm errors and MAE, with respect to the reference solution in $100,000 \times 201$ equally spaced grid points in the time interval $[0 \quad 100]$ and in the space interval $[0 \quad 32\pi]$, respectively. As shown, for the given tolerances, the proposed method outperforms \texttt{ode15s} and \texttt{ode23s} in all metrics.
\begin{figure}
    \centering
    \subfigure[]{\includegraphics[width=0.45 \textwidth]{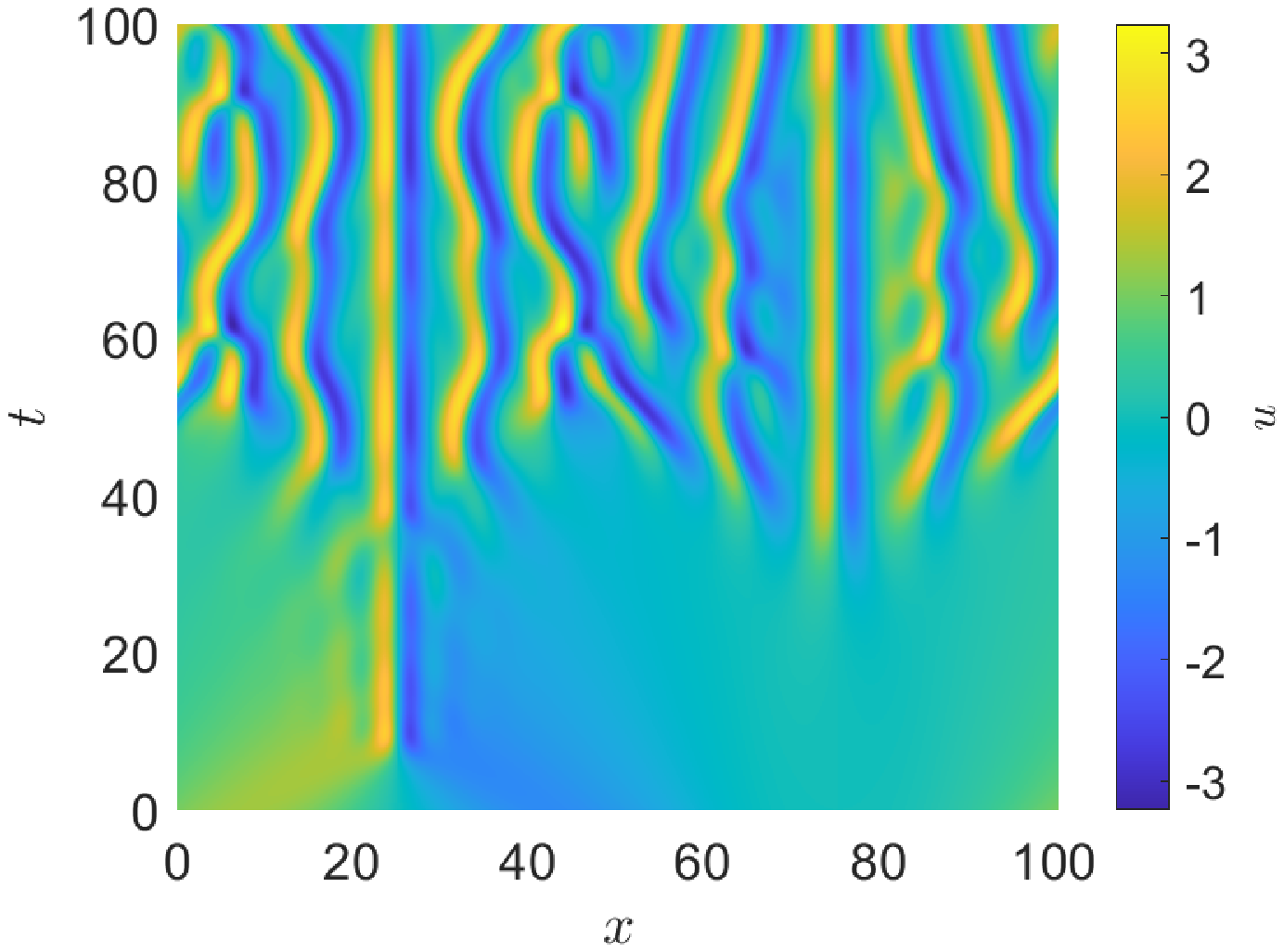}}
    \subfigure[]{\includegraphics[width=0.45 \textwidth]{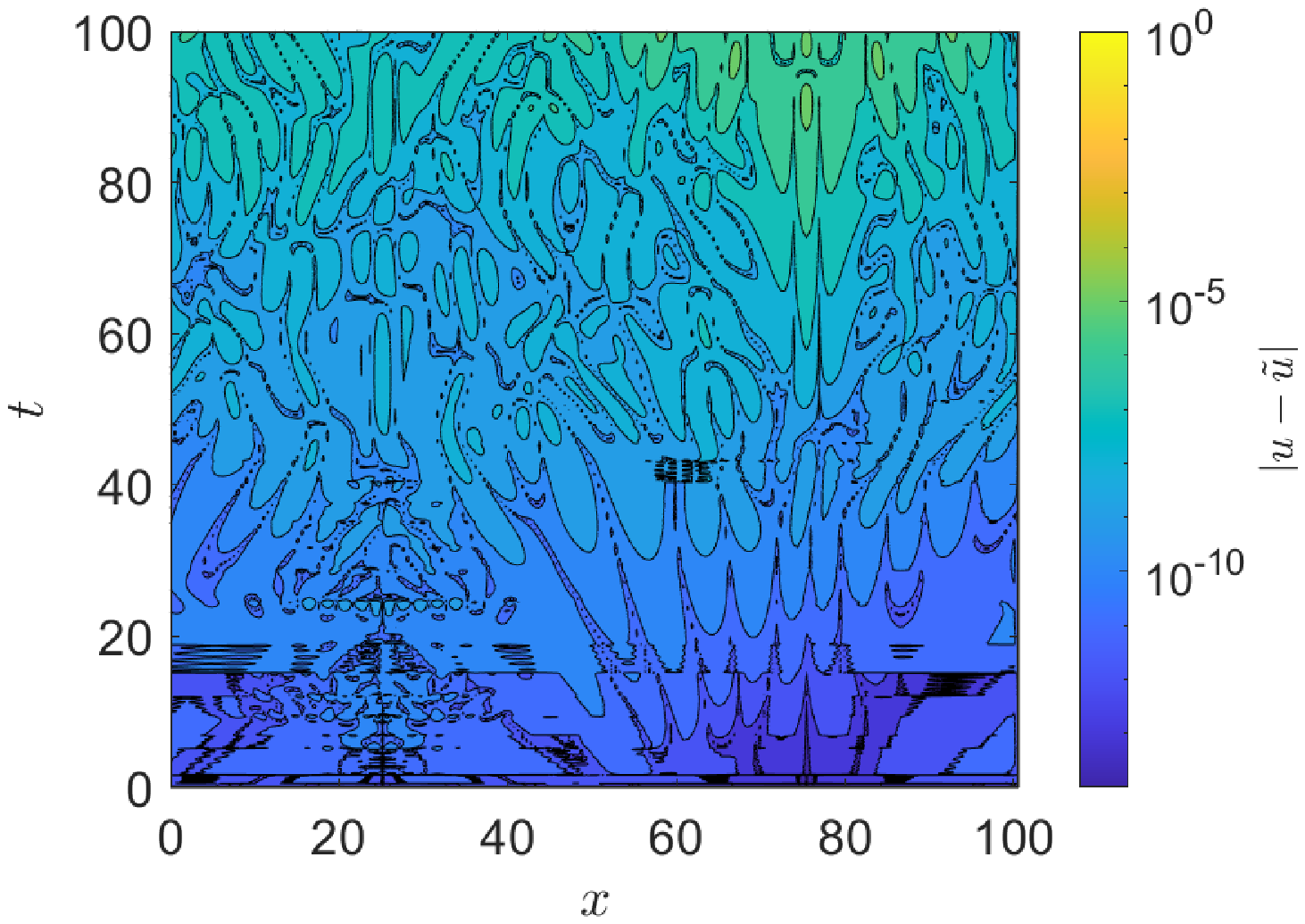}}
    \subfigure[]{\includegraphics[width=0.45 \textwidth]{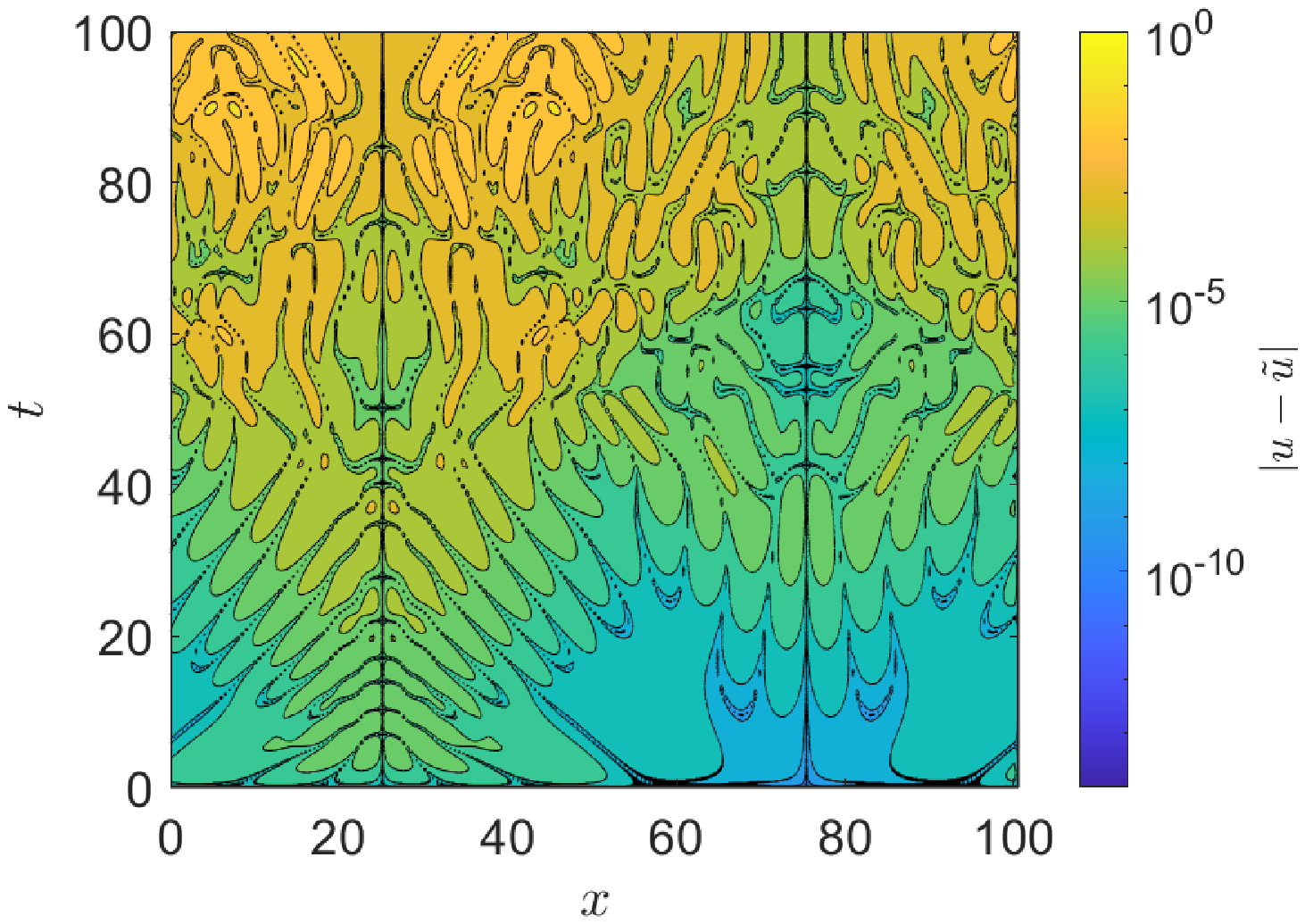}}
    \subfigure[]{\includegraphics[width=0.45 \textwidth]{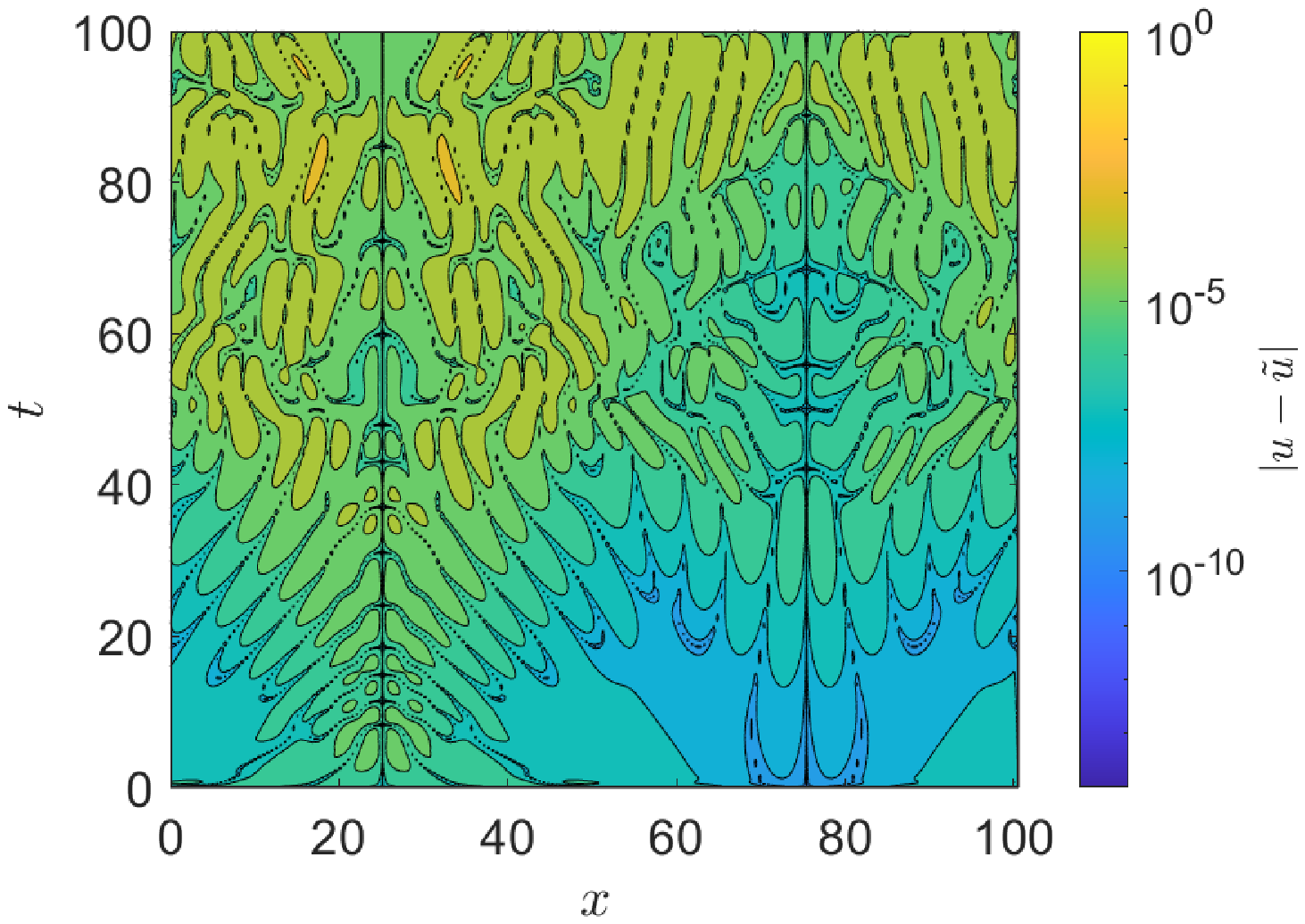}}
    \caption{Kuramoto-Sivashinsky PDE discretized with FD \eqref{eq:Kuramoto_Sivanshinsky_FD} in the time interval $[0 \quad 100]$. Contour plots of absolute point error computed with tolerance 1e$-$06: a) reference solution computed with \texttt{ode15s} with tol=1e$-$14, b) PIRPNN absolute errors c) \texttt{ode23s} absolute errors d) \texttt{ode15s} absolute errors. \label{fig:KS_abs_err}}
\end{figure}
As it is shown in Figure~\ref{fig:KS_abs_err}, for tolerances 1e$-$06, the proposed scheme achieves more accurate solutions than \texttt{ode23s} and \texttt{ode15s}.
\begin{table}[ht]
\begin{center}
\caption{Kuramoto-Shivasinsky PDE \eqref{eq:Kuramoto_Sivanshinsky_FD}. Computational times in seconds (median, minimum and maximum over 10 runs) and number of points required in the time interval $[0 \quad 100]$ by PIRPNN, \texttt{ode23s} and  \texttt{ode15s} with both absolute and relative tolerances set to 1e$-$03 and 1e$-$06. The reference solution was computed by \texttt{ode15s} with tolerances equal to 1e$-$14. We remark that most of the computational time for computing the solution with, reported in the row, is due to the time needed for the construction of the Jacobian, reported in the PIRPNN jac row, that for this problem has a dimension $4000\times4000$.}
    {\footnotesize
    \setlength{\tabcolsep}{3pt}
    \begin{tabular}{|l |l l l r |l l l r|}
        \hline
        & \multicolumn{4}{c|}{$tol=$ 1e$-$03} & \multicolumn{4}{c|}{$tol=$ 1e$-$06} \\
        \cline{2-9}
        & median & min & max & \multicolumn{1}{l|}{\# pts} & median & min & max & \multicolumn{1}{l|}{\# pts}\\
        \hline
        \rowcolor{LightCyan}
        PIRPNN & 8.81e$+$01  & 8.32e$+$01  & 9.47e$+$01 &  900  & 9.67e$+$01 & 9.13e$+$01 & 1.03e$+$02 &  1133 \\
        \rowcolor{LightRed}
        PIRPNN jac & 8.29e$+$01 & 6.90e$+$01 & 8.66e$+$01 &    & 7.71e$+$01 & 7.59e$+$01 & 8.53e$+$01 &  \\
        \texttt{ode23s}  & 1.38e$+$01 & 1.37e$+$01 & 1.65e$+$01 &  351   & 1.63e$+$00 & 1.59e$+$00 & 1.64e$+$00 &  4403\\
        \texttt{ode15s} & 3.22e$+$02 & 3.21e$+$02 & 4.45e$+$02 &  247  & 8.15e$-$02 & 7.43e$-$02 & 8.72e$-$02 &  749\\
        reference & 1.24e$+$00 & 1.21e$+$00 & 1.32e$+$00 &  14889 & 1.24e$+$00 & 1.21e$+$00 & 1.32e$+$00 &  14889 \\
        \hline
    \end{tabular}
    }
\end{center}
\label{tab:KS_time_points}
\end{table}
In Table~\ref{tab:KS_time_points}, we report computational times and number of points required by each method, including the ones required for computing the reference solution. As shown, the corresponding total number of points required by the proposed scheme is comparable with the ones required by \texttt{ode23s} and \texttt{ode15s} and significantly less than the number of points required by the reference solution.
On the other hand, the computing times of the proposed method are significantly larger than the ones concerning \texttt{ode23s} and \texttt{ode15s} (Table \ref{tab:KS_time_points}) and also with the ones required by the reference solution. As reported, the higher computational cost is due to the time required for the construction of the Jacobian matrix, which even if it is sparse has a complex structure that make difficult to assemble it. An efficient construction/assembly of the sparse Jacobian matrix is beyond the scope of this paper. However, in a subsequent work, we aim at implementing matrix-free methods in the Krylov subspace \cite{brown1994using,kelley1999iterative} such as Newton-GMRES for the solution of such large-scale problems.


\section{Discussion\label{sec:discussion}}
We proposed a physics-informed machine learning scheme based on the concept of random projections for the solution of IVPs of nonlinear ODEs and index-1 DAEs. The only unknowns are the weights from the hidden to the output layer which are estimated using Newton iterations.  To deal with the ill-posedness least-squares problem, we used  SVD decomposition when dealing with low-dimensional systems and sparse QR factorization with regularization when dealing with large-dimensional systems as for example those that arise from the discretization in space of PDEs. The hyper-parameters of the scheme, i.e., the bounds of the uniform distribution from which the values of the shape parameters of the Gaussian kernels are drawn and the interval of integration are parsimoniously chosen, based on the bias-variance trade-off concept and a variable step size scheme based on the elementary local error control algorithm. Furthermore, to facilitate the convergence of the scheme, we address a natural continuation method for providing good initial guesses for the Newton iterations.

The efficiency of the proposed scheme was assessed both in terms of numerical approximation accuracy and computation cost considering seven benchmark problems, namely the index-1 DAE Robertson model, a non autonomous index-1 DAEs mechanics problem, a non autonomous index-1 DAEs power discharge control problem, the chemical Akzo Nobel problem, the Belousov-Zhabotinsky ODEs model, the one-dimensional Allen-Cahn phase-field PDE and the one-dimensional Kuramoto-Sivashinky PDE. In addition, the performance of the scheme was compared against three stiff solvers of the MATLAB ODE suite, namely the \texttt{ode15s}, \texttt{ode23s} and \texttt{ode23t}. The results suggest that proposed scheme arises an alternative method to well established traditional solvers.

Future work is focused on the further development and application of the scheme for solving very large scale stiff and DAE problems (also of index higher than one) arising in many problems of contemporary interest, thus considering and integrating ideas from other methods such as DASSL \cite{petzold1982description}, CSP \cite{hadjinicolaou1998asymptotic} and matrix-free methods in the Krylov-subspace \cite{brown1994using,kelley1999iterative} in order to speed up computations for high-dimensional systems.

\section*{Acknowledgments}
This work was supported by the Italian program ``Fondo Integrativo Speciale per la Ricerca (FISR)'' - FISR2020IP 02893/ B55F20002320001.
G.F. is supported by a 4-year scholarship from the Scuola Superiore Meridionale, Universit\`a degli Studi di Napoli Federico II, Italy. E.G. was supported by a 3-year scholarship from the Universit\`a degli Studi di Napoli Federico II, Italy. 



\end{document}